\documentclass[reqno]{amsart}

\usepackage{amsmath,amssymb,amsfonts,amsthm}
\providecommand{\texorpdfstring}[2]{#1}
\usepackage{hyperref}
\hypersetup{hidelinks}
\usepackage{cleveref}
\usepackage{xcolor}
\usepackage{longtable,array}
\usepackage[numbers,sort&compress]{natbib}

\numberwithin{equation}{section}

\newtheorem{theorem}{Theorem}[section]
\newtheorem{lemma}[theorem]{Lemma}
\newtheorem{proposition}[theorem]{Proposition}

\newtheorem{conjecture}[theorem]{Conjecture}
\newtheorem{remark}[theorem]{Remark}

\newcommand{\R}{\mathbb{R}}

\newcommand{\D}{\mathrm D}

\title[Two $(1+1)$D systems from 2D Boussinesq and 3D Euler]{2D inviscid Boussinesq equations and 3D axisymmetric Euler equations: (1) A unification ($Em$), (2) Finite-time blow-up of two unified $(1+1)$D systems rigorously derived from ($Em$)}
\author{Yaoming Shi}
\address{California, United States}
\email{ymshi@protonmail.com}
\date{July 12, 2026}

\subjclass[2020]{35B44, 35B40, 35Q86, 76B03, 76D05}
\keywords{2D inviscid Boussinesq, 3D axisymmetric Euler, Constantin--Lax--Majda type model, finite-time blow-up, self-similar blow-up, strain criterion, ridge reduction, ray dynamics, conditional perturbative control, weighted Sobolev energy}
\begin{document}

\begin{abstract}
We derive $(1+2)$D subsystems~$(E1,E2)$ from the (2D inviscid Boussinesq, 3D axisymmetric Euler) equations in the (meridian) plane.  The integer $m=1,2$ only appears in two numerical coefficients of subsystem~$(Em)$. Thus we discover a unification. We then study two unified $(1+1)$-dimensional systems, denoted $(R0)$ and $(Z0)$, that are rigorously derived from $(Em)$. The main point of view in this revision is that these $(1+1)$D systems are not ad hoc model equations and not merely ``symmetry-axis reductions.'' Rather, they arise as exact symmetry-axis/apex restrictions of the full $(1+2)$D system~$(Em)$ obtained from 2D inviscid Boussinesq and 3D axisymmetric Euler, and they already contain the core finite-time singularity mechanism of the full problem.

The rev5 geometry is based on the symmetry axes
\[
\theta=0,\qquad \theta=\pm {\pi}/{2},\qquad \theta=\pi,
\]
for which ridge flatness is preserved automatically by the evenness in $(r,z)$. Along these axes, and in particular at the apex $x^2=r^2+z^2=0$, the reduced dynamics closes exactly. This yields two rigorously derived unified $(1+1)$D systems: the horizontal-axis system $(R0)$ and the vertical-axis system $(Z0)$. The apex trace of these systems reduces further to a closed ODE of Constantin--Lax--Majda type, from which we obtain finite-time blow-up at the coordinate origin.

This revision adds two pieces to the blow-up verification. First, Subsection~\ref{seq:vorticity-strain} computes the vorticity and the full cylindrical velocity gradient for the axisymmetric realization and shows that explicit apex blow-up forces the strain continuation integral
\[
\int_0^T \|\nabla\boldsymbol v(t)\|_{L^\infty}\,dt
\]
to diverge. Thus the apex mechanism is consistent with the standard pre-BKM/strain criterion: the singularity is detected by the $L^\infty$ norm of the strain, even though the pointwise vorticity component at the apex is not the relevant lower bound. Second, Section~\ref{sec:R0-SS} constructs explicit apex-only self-similar profiles for the convective horizontal-axis reduction $(R0)$. These profiles are regular at the apex before the blow-up time, decay sufficiently at spatial infinity, blow up only at $x=0$, and satisfy the same strain-divergence criterion through Proposition~\ref{prop:R0-SS-apex-pre-BKM}.

The paper therefore has four main outputs. First, it derives the polar $(1+2)$D subsystem~$(Em)$ from the 2D inviscid Boussinesq equations and from the 3D axisymmetric Euler equations and identifies the exact unified $(1+1)$D systems $(R0)$ and $(Z0)$ carried by the symmetry axes. Second, it proves finite-time blow-up for the resulting apex dynamics and verifies the corresponding strain criterion. Third, it constructs apex-only self-similar blow-up profiles for $(R0)$. Fourth, it derives the exact background--remainder equations and formulates a conditional nonlinear stability mechanism: if a compatible full background exists on $[0,T)$ with the adapted coefficient bounds required by the weighted energy method, if the weighted elliptic estimate holds, and if a gap exponent $\sigma\in(C_{\rm lin},1)$ is available so that the remainder remains below the background scale, then the same finite-time apex blow-up transfers to the full solution.

In this way, the manuscript isolates the central unresolved step very clearly. What is already rigorous is the derivation of $(Em)$ from 2D Boussinesq and 3D Euler, the exact derivation of the two unified $(1+1)$D systems $(R0)$ and $(Z0)$ from $(Em)$, the closed apex blow-up mechanism, the strain-based blow-up detection, the apex-only self-similar construction for $(R0)$, and the derivation of the perturbative conditional framework. What remains open is the construction of a full rev5 background away from the apex, together with the elliptic/coercive and gap estimates needed to close the nonlinear bootstrap unconditionally.
\end{abstract}

\maketitle

\section{Introduction}\label{sec:intro}

The formation of finite-time singularities for the two-dimensional inviscid Boussinesq equations and for the three-dimensional incompressible Euler equations with swirl remains among the central open problems in mathematical fluid dynamics. In this paper we study closed $(1+2)$-dimensional subsystems $(E1,E2)$, rigorously derived from the (2D inviscid Boussinesq, 3D axisymmetric Euler) equations in velocity--pressure form under a parity ansatz on the meridian plane. Thus we discover a unification because the integer $m=1,2$ only appears in two numerical coefficients of the unified system $(Em)$. Our second emphasis in this revision is on two exact unified $(1+1)$D systems, denoted $(R0)$ and $(Z0)$, which are rigorously derived from $(Em)$ by restricting to the distinguished symmetry axes. Our aim is twofold: first, to identify a precise apex finite-time blow-up mechanism already visible in these exact $(1+1)$D descendants of the (2D Boussinesq, 3D axisymmetric Euler) systems; second, to formulate a perturbative stability theory around compatible backgrounds that is mathematically solid at the linear level and explicit about the remaining nonlinear obstruction.

A central advantage of the pressure--velocity formulation is that the divergence-free condition remains visible throughout the reduction and the symmetry-axis geometry is revealed directly. In the rev5 setup, the evenness in $(r,z)$ propagates and therefore preserves the ridge-flatness condition automatically on the axes $\theta=0,\pm \frac{\pi}{2}$. Although the convective terms do not vanish identically on those axes away from the origin, the apex dynamics at $x=0$ is closed and decoupled from the off-apex region. This yields two exact unified $(1+1)$D systems $(R0)$ and $(Z0)$ from the full Euler/Boussinesq-derived subsystem $(Em)$. In our view, this exact derivation of $(R0)$ and $(Z0)$ is one of the main rigorous outputs of the paper, and it deserves to be emphasized at least as strongly as the later conditional stability framework. The resulting analysis is therefore best described as a pressure--velocity approach to finite-time blow-up for two unified $(1+1)$D systems rigorously derived from the 2D inviscid Boussinesq and the 3D axisymmetric Euler equations.

We call the Hou--Li type variables $\{u,v,g\}$ of~\eqref{eq:uvg-def} the \textbf{building blocks of vorticity}, because their physical dimensions agree with those of vorticity. In these variables, the quadratic stretching terms also simplify to $(uv,\, v^2-u^2,\, -g^2)$, which makes the CLM-type reaction structure transparent.

Two additional checks are essential for turning the exact apex dynamics into a blow-up statement compatible with the usual Euler continuation theory. The first is a physical strain check. Subsection~\ref{seq:vorticity-strain} computes the components of $\nabla\times\boldsymbol v$ and of $\nabla\boldsymbol v$ in the axisymmetric realization and proves that the explicit apex law forces $\int_0^T\|\nabla\boldsymbol v(t)\|_{L^\infty}\,dt=\infty$. Thus the mechanism is detected by the standard strain criterion rather than by an artificial one-dimensional norm. The second is an apex-only self-similar construction for the convective horizontal-axis reduction. Section~\ref{sec:R0-SS} constructs regular self-similar profiles for $(R0)$, proves that their blow-up set is exactly the apex in the one-dimensional axis variable, and verifies again that the associated apex trace forces divergence of the $L^\infty$ strain integral whenever the trace is realized by a smooth axisymmetric Euler field.

\paragraph{\textbf{Related work and context.}}
The mathematical literature around singularity formation for inviscid fluids is extensive, and we recall only the works most directly connected with the reduction, blow-up mechanism, and perturbative framework used here.  Classical continuation and loss-of-regularity perspectives for the 3D Euler equations include Beale--Kato--Majda~\citep{BKM984} and Constantin~\citep{C1986,C2007}.  For the inviscid 2D Boussinesq system, local theory and conditional breakdown criteria go back to Cannon--DiBenedetto~\citep{CD1980}, Chae--Nam~\citep{ChaeNam1997}, Chae--Kim--Nam~\citep{CKN1999}, and Taniuchi~\citep{Tan2002}; see also Wu's lecture notes~\citep{Wu2012} and the small-scale formation work of Kiselev--Park--Yao~\citep{KPY2022}.  For the axisymmetric Euler geometry, the pressure--velocity and vorticity-building-block viewpoint is naturally compared with the standard Euler and axisymmetric formulations in Majda--Bertozzi~\citep{MB2002}, Hou--Li~\citep{HLi2006}, Chae--Lee~\citep{CL2002}, Drazin--Riley~\citep{DR2006}, and Chen--Fang--Zhang~\citep{CFZ2015}.

Model problems and exactly solvable mechanisms provide an important guide for the present apex dynamics.  The Constantin--Lax--Majda model~\citep{CLM1985}, the De~Gregorio model~\citep{DeGregorio1990}, Schochet's viscous model analysis~\citep{Schochet1986}, the didactic 2D model of Chae--Constantin--Wu~\citep{CCW2014}, the Boussinesq-type one-dimensional model of Choi--Kiselev--Yao~\citep{CKY2015}, the axisymmetric Euler model of Choi--Hou--Kiselev--Luo--\v{S}ver\'ak--Yao~\citep{CHKLSY2017}, and the Hou--Liu one-dimensional axisymmetric scenario~\citep{HouLuo2014} all illustrate how a reduced stretching law can isolate a finite-time blow-up mechanism.  Recent rigorous PDE singularity and perturbative-stability frameworks include Elgindi--Jeong~\citep{EJ2019,EJ2020}, Chen--Hou~\citep{CH2021,CH2022}, Drivas--Elgindi~\citep{DE2023}, and Elgindi--Pasqualotto~\citep{EP2023}.  The weighted-energy and blow-up-stability language used later is also close in spirit to the broader self-similar and ODE-blow-up stability literature, including Giga--Kohn~\citep{GigaKohn1985}, Giga~\citep{Giga1987}, Merle--Rapha\"el~\citep{MerleRaphael2005}, Rapha\"el--Rodnianski~\citep{RaphaelRodnianski2012}, Collot--Merle--Rapha\"el~\citep{Collot2017}, and Khenissy--Zaag~\citep{Zaag2011}, as well as the local regularity and weighted-estimate tradition represented by Caffarelli--Kohn--Nirenberg~\citep{CKN1982} and Lin~\citep{Lin1998}.

Compared with these works, the present paper starts from the pressure--velocity form of both the 2D inviscid Boussinesq equations and the 3D axisymmetric Euler equations, works with smooth functions on the full reduced-plane geometry, and uses parity and symmetry-axis structure rather than boundary effects or lower-regularity singular norms to expose the reduced dynamics.  This viewpoint keeps the divergence-free constraint visible throughout the derivation and leads to the unified system $(Em)$, in which the integer $m=1,2$ enters only through two numerical coefficients.  The exact axis/apex restrictions then yield the two unified $(1+1)$D systems $(R0)$ and $(Z0)$; these are not ad hoc model equations, but closed descendants of the Euler/Boussinesq-derived subsystem.  The conditional background--remainder theory later in the paper should therefore be read as a perturbative transfer mechanism for this rigorously derived apex blow-up structure.

\medskip
\noindent\textbf{Main achievements.}
\begin{itemize}
	\item[(0)] We derive the closed subsystems $(E1,E2)$ exactly from the (2D inviscid Boussinesq, 3D axisymmetric Euler) equations under a parity ansatz and identify the variables $\{u,v,g\}$ as convenient vorticity building blocks.
	\item[(1)] We rigorously derive from $(Em)$ two exact unified $(1+1)$D systems, $(R0)$ and $(Z0)$, carried by the symmetry axes. These systems are not model approximations but exact axis restrictions of the 2D inviscid Boussinesq and 3D axisymmetric Euler reduction.
	\item[(2)] We compute the physical vorticity and strain components in Subsection~\ref{seq:vorticity-strain} and prove that the explicit apex dynamics forces the pre-BKM strain quantity $\int_0^T\|\nabla\boldsymbol v(t)\|_{L^\infty}\,dt$ to diverge.
	\item[(3)] We construct apex-only self-similar profiles for the convective horizontal-axis system $(R0)$ in Section~\ref{sec:R0-SS}. These profiles are regular at $x=0$ for all $t<T$, blow up at $x=0$ as $t\uparrow T$, remain bounded away from the apex, and satisfy the strain criterion through Proposition~\ref{prop:R0-SS-apex-pre-BKM}.
	\item[(4)] We show that the pressure--velocity form reveals the divergence-free structure and the symmetry-axis geometry in a way that is compatible with the exact ray reduction and the self-similar apex profile.
	\item[(5)] We derive the exact remainder equations around a prescribed background in the $(x,\theta)$ variables, with all pure-background contributions retained in the background system.
	\item[(6)] We prove weighted singular linear estimates and formulate a conditional nonlinear remainder theorem of Elgindi type: once a compatible background, the weighted elliptic input, and a subcritical gap exponent $\sigma\in(C_{\rm lin},1)$ are available, blow-up transfers from its apex dynamics to the full solution.
	\item[(7)] We isolate the remaining open step in the program, namely the construction and control of a full background away from the apex together with the compatibility structure needed to close the nonlinear bootstrap.
\end{itemize}

\medskip
\noindent\textbf{Organization.}
Section~\ref{sec:derivation} derives the polar $(1+2)$D subsystems $(E1,E2)$ from the (2D inviscid Boussinesq, 3D axisymmetric Euler) equations and identifies the exact symmetry-axis reductions. Within that derivation, Subsection~\ref{seq:vorticity-strain} computes the physical vorticity and strain components and connects explicit apex blow-up to the pre-BKM strain criterion. Section~\ref{sec:horizontal-dynamics} studies the closed apex ODE system CLM-$q$ and proves its finite-time blow-up profile. Section~\ref{sec:R0} treats the convective axis reduction $(R0)$ and shows that the same explicit apex blow-up persists at $x=0$. Section~\ref{sec:R0-SS} constructs apex-only self-similar profiles for $(R0)$, proves the corresponding off-apex boundedness, and verifies the strain criterion for the constructed self-similar solution. Section~\ref{sec:remainder-derivation} derives the exact background--remainder system around a prescribed background. Section~\ref{sec:energy-bounds} records the background coefficient bounds and late-time scales needed for the perturbative argument. Section~\ref{sec:initial-boundary-conditions} specifies the initial and boundary conditions imposed on the remainder variables. Section~\ref{sec:stability} proves the weighted energy inequalities and formulates the conditional blow-up transfer mechanism. Section~\ref{sec:conclusion} summarizes the rigorous results already obtained and isolates the remaining gap to a full 2D inviscid Boussinesq and 3D axisymmetric Euler blow-up theorem. Section~\ref{sec:acknowledgements} records acknowledgements and provenance remarks.

\section{The derivation of system (Em) from 2D inviscid Boussinesq equations and the 3D axisymmetric Euler equations}\label{sec:derivation}
\subsection{Velocity--pressure formulation and Hou--Li type variables}\label{sec:E2-derivation}
In this section we simultaneously convert the velocity--pressure form of the 2D inviscid Boussinesq equations (see, for example, Wu~\citep{Wu2012}, Elgindi--Jeong~\citep{EJ2020}, and Kiselev--Park--Yao~\citep{KPY2022}) and the 3D axisymmetric Euler equations (see, for example, Chae--Lee~\citep{CL2002}, Drazin--Riley~\citep{DR2006}, Chen--Fang--Zhang~\citep{CFZ2015}) into a new formulation in terms of vorticity building blocks. In this form, the structure of the vortex-stretching and convection terms becomes transparent, which makes the study of the apex blow-up mechanism in compressed coordinates more tractable.

In the velocity-pressure form, the 2D inviscid Boussinesq equations for velocity $\boldsymbol{u}=u_2\boldsymbol{e}_2+u_3\boldsymbol{e}_3$, pressure $P$ and buoyancy scalar $\vartheta$ in $(x_2,x_3)\in\R^2$ and the 3D axisymmetric Euler equations on the semimeridian plane $(r\geq 0, z\in\R)$ are given by

\begin{minipage}{0.40\textwidth}
\begin{equation}\label{eq:2DBoussinesq}
	\left\{\begin{aligned}
		&\text{2D inviscid Boussinesq}\\
		&\tfrac{\mathrm{\tilde{D}}}{\mathrm{D}t}\vartheta=0,\\
		&\tfrac{\mathrm{\tilde{D}}}{\mathrm{D}t}u_2
		=-\partial_2P +\vartheta,\\
		&\tfrac{\mathrm{\tilde{D}}}{\mathrm{D}t} u_3=-\partial_3P,\\
		&\partial_2u_2\,+ \partial_3u_3 =0,\\
		&\tfrac{\mathrm{\tilde{D}}}{\mathrm{D}t}=\partial_t+u_2 \partial_2+ 
		u_3 \partial_3,\\
	\end{aligned}\right.
\end{equation}
\end{minipage}
\begin{minipage}{0.45\textwidth}
	\begin{equation}\label{eq:3DEuler}
		\left\{
		\begin{aligned}
			&\text{3D axisymmetric Euler}\\
			&\tfrac{\mathrm{\tilde{D}}}{\mathrm{D}t}(rv^\phi)^2
			=0,\\
			&\tfrac{\mathrm{\tilde{D}}}{\mathrm{D}t}v^r
			=-\partial_r P+\tfrac{1}{r^3}(rv^\phi)^2,\\
			&\tfrac{\mathrm{\tilde{D}}}{\mathrm{D}t} v^z
			=-\partial_z P,\\
			&\partial_r(rv^r)+\partial_z (rv^z)=0,\\
			&\tfrac{\mathrm{\tilde{D}}}{\mathrm{D}t}:=\partial_t+v^r \partial_r+
			v^z \partial_z.
		\end{aligned}
		\right.
	\end{equation}
\end{minipage}\\

It is well-known (see e.g. Majda and Bertozzi~\cite{MB2002}) that 2D Boussinesq equations~\eqref{eq:2DBoussinesq} have properties similar to the 3D axisymmetric Euler equations~\eqref{eq:3DEuler}, at least away from the symmetry axis ($r=0$). Indeed, comparing~\eqref{eq:2DBoussinesq} with~\eqref{eq:3DEuler}, we see that the buoyancy force $\vartheta$ in the Boussinesq equation plays the role of the axisymmetric Euler centrifugal force $(rv^\phi)^2/r^3$. Equivalently, after division by the axis coordinate, the Boussinesq quantity $u^2=\vartheta/x_2$ corresponds to the Euler quantity $(v^\phi/r)^2$. The real difference between the two systems only emerges near the axis of symmetry, where the factors of $r$ can conceivably change the nature of the dynamics.

In this section, we make the analogy precise by removing the caveat ``away from the symmetry axis.''

For 2D inviscid Boussinesq equations, we assume that $u_2$ is odd in $x_2$ and even in $x_3$, that $u_3$ is even in $x_2$ and odd in $x_3$, that $P$ is even in $(x_2,x_3)$, and that $\vartheta$ is odd in $x_2$ and even in $x_3$. Define the Hou--Li~\citep{HLi2006} type variables 
\begin{equation}\label{eq:uvg-def-B}
	\{v,g,u^2,p\}:=\left\{-\frac{u_2}{x_2},\frac{u_3}{x_3},\frac{\vartheta}{x_2},P\right\}.
\end{equation}
Then~\eqref{eq:2DBoussinesq} can be converted to the $(m=1)$ version $(E1)$ of system $(Em)$ \eqref{eq:Em} (with $(r,z)=(x_2,x_3)$ for notational convenience).

For 3D axisymmetric Euler equations, we assume that $\left(v^{\phi},v^{r}\right)$ are odd in $r$ and even in $z$, while $v^z$ is even in $r$ and odd in $z$, and $P$ is even in $(r,z)$. Define the Hou--Li~\citep{HLi2006} type variables by
\begin{equation}\label{eq:uvg-def}
	\{v,u,g,p\}:=\left\{-\frac{v^r}{r},\frac{v^{\phi}}{r},\frac{v^z}{z},P\right\}.
\end{equation}
Then, as shown in Shi~\citep{Shi2026B}, \eqref{eq:3DEuler} can be converted to the $(m=2)$ version $(E2)$ of system $(Em)$ \eqref{eq:Em}:

\begin{equation}\label{eq:Em}
	\left\{
	\begin{aligned}
		&\tfrac{\mathrm{D}}{\mathrm{D}t}u
				=\tfrac12m^2uv,\qquad\qquad t\in[0,T), (r,z)\in\R^2\\
		&\tfrac{\mathrm{D}}{\mathrm{D}t}v
				=v^2-u^2+\tfrac{1}{r}p_r\\
		&\tfrac{\mathrm{D}}{\mathrm{D}t}g
		=-g^2\,\,\,\,\,-\tfrac{1}{z}p_z\\
		&z\partial_z g-r\partial_r v+g-mv=0,\\
		&\tfrac{\mathrm{D}}{\mathrm{D}t}:=\partial_t-v r\partial_r+g z\partial_z.
	\end{aligned}
	\right.
\end{equation}

\begin{remark}\label{similarity}
	We call \eqref{eq:Em} the unified system (Em). Thus (E1) stands for 2D inviscid Boussinesq equations and (E2) stands for 3D axisymmetric Euler equations. They differ only by a numerical coefficient $m$ in two places.
\end{remark}

\smallskip
\begin{remark}\label{rem:symetric-in-rz}
	From the inspection of \eqref{eq:Em}, we notice that if the initial conditions for $\{u,v,g,p\}$ are symmetric in $(r,z)$, then the PDE system preserves these symmetry properties. In this sense, we regard~\eqref{eq:Em} as being defined on $\R^2$.
\end{remark}

\begin{remark}
	We call $\{u,v,g\}$ the building blocks of vorticity (cf. \eqref{omega}), because their physical dimensions agree with those of $\boldsymbol{\omega}=\nabla \times\boldsymbol{v}$.
	Also the quadratic vortex stretching terms are greatly simplified: $(uv,\ v^2-u^2,\ -g^2)$.
\end{remark}
\subsection[CLM]{Constantin--Lax--Majda equations}
\begin{remark}
	We regard \eqref{eq:Em} as a two-dimensional Eulerian analogue of the Constantin--Lax--Majda equations \citep{CLM1985}. In the case $(p=0,m=2)$, the first two equations in \eqref{eq:Em} reduce to the Constantin--Lax--Majda system after the identification $\tfrac{\mathrm{D}}{\mathrm{D}t}=\tfrac{\partial}{\partial t}$ and $u(t,r,\cdot)=\tfrac12\omega(t,r)$, $v(t,r,\cdot)=\tfrac12H(\omega)(t,r)$, or $u(t,\cdot,z)=\tfrac12\omega(t,z)$, $v(t,\cdot,z)=\tfrac12H(\omega)(t,z)$.\\
	
	\begin{minipage}{1.0\textwidth} 
		\begin{equation}\label{eq:CLM2}
			\left\{\begin{aligned}
				&u_t=
				2vu,\qquad\qquad x\in\mathbb{R}\\
				&v_t
				=v^2-u^2.\\
			\end{aligned}\right.
		\end{equation}
	\end{minipage}\\
	
	In the Constantin--Lax--Majda equations, $v=\tfrac12H(\omega)$ is a function of $u=\tfrac12\omega$. In \eqref{eq:Em}, $v$ is independent of $u$.
\end{remark}

We now present the explicit finite-time blow-up solutions of the Constantin--Lax--Majda system as a benchmark for further comparison.

Constantin, Lax, and Majda converted \eqref{eq:CLM2} into the scalar complex ODE with dependent variable $z(t,x)=v(t,x)+i\,u(t,x)$ and found the explicit solution:\\

\begin{minipage}{1.0\textwidth} 
	\begin{equation}\label{NSBOuOddx4C}
		\left\{\begin{aligned}
			z_t(t,x)&=z^2(t,x).\qquad x\in\mathbb{R}\\
			z(t,x)&=\tfrac{1}{f(x)+ig(x)-t}.
		\end{aligned}\right.
	\end{equation}
\end{minipage}\\

Substituting the initial data into \eqref{NSBOuOddx4C} yields the following result.

\begin{theorem}[Constantin--Lax--Majda explicit formula]\label{thm:CLM}
	
	Suppose $u_0(x)=u(0,x)$ is a smooth function that decays sufficiently rapidly as
	$|x|\to\infty$, and let $v_0(x)=v(0,x)=H(u_0)(x)$. Then the solution to the model vorticity system~\eqref{eq:CLM2} is explicitly given by
	
	\begin{minipage}{1.0\textwidth} 
		\begin{equation}\label{eq:CLM-2-u-v}
			\left\{\begin{aligned}
				&u(t,x)=\frac{u_0(x)}{\left[1-t v_0(x)\right]^2+t^2u_0^2(x)},\\
				&v(t,x)=\frac{v_0(x)
					\left[1-t v_0(x)\right]-tu_0^2(x)}{\left[1-t v_0(x)\right]^2+t^2u_0^2(x)}.\\
			\end{aligned}\right.
		\end{equation}
	\end{minipage}
\end{theorem}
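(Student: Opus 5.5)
The plan is to reduce the model system~\eqref{eq:CLM2} to a pointwise complex Riccati equation, integrate it explicitly, and then take real and imaginary parts. Set $w(t,x):=v(t,x)+i\,u(t,x)$ (we write $w$ rather than $z$ to avoid a clash with the meridian coordinate). Using~\eqref{eq:CLM2},
\[
w_t=v_t+i\,u_t=(v^2-u^2)+i\,(2uv)=(v+iu)^2=w^2 .
\]
In the Constantin--Lax--Majda model, $v=H(u)$ is nonlocal in $x$. The key structural fact we will invoke is the Hilbert-transform identity $H(fH f)=\tfrac12\big((Hf)^2-f^2\big)$. It guarantees that $w$ stays the boundary value of an analytic function in the upper half plane, so the relation $v(t)=H(u(t))$ is propagated by the local evolution. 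Consequently, for each fixed $x$ the system reduces to the scalar ODE $w_t=w^2$ with initial value $w_0(x)=v_0(x)+i\,u_0(x)$, and $x$ enters only as a parameter.

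The next step is to integrate $w_t=w^2$. As long as $w\neq0$, we have $(1/w)_t=-1$, so $1/w=1/w_0-t$. This gives
\[
w(t,x)=\frac{w_0}{1-t\,w_0},
\]
and the formula also covers the trivial solution $w\equiv0$ when $w_0=0$. This matches~\eqref{NSBOuOddx4C} with $f+ig=1/w_0$. Uniqueness for the locally Lipschitz ODE ensures that this is the solution for as long as the denominator $1-tw_0$ does not vanish.

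To extract $u$ and $v$, we rationalize the fraction:
\[
w=\frac{(v_0+iu_0)\big(1-tv_0+itu_0\big)}{(1-tv_0)^2+t^2u_0^2}.
\]
Expanding the numerator gives real part $v_0(1-tv_0)-tu_0^2$ and imaginary part $u_0(1-tv_0)+tu_0v_0=u_0$. Reading off $v=\operatorname{Re}w$ and $u=\operatorname{Im}w$ yields exactly~\eqref{eq:CLM-2-u-v}. The denominator vanishes only if $u_0(x)=0$ and $tv_0(x)=1$. Hence the formula is smooth until the first time $T=1/\sup\{v_0(x):u_0(x)=0\}$, which is where blow-up occurs.

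The main obstacle is the justification, in the first step, that the nonlocal constraint $v=H(u)$ is preserved, so that the pointwise ODE is legitimate. The approach is to note that $w_0=v_0+iu_0$ extends analytically to the upper half plane with decay, using the decay assumption on $u_0$. The explicit $w$ is then also such a boundary value, because $w_0/(1-tw_0)$ is analytic wherever $1-tw_0\neq0$. Therefore $\operatorname{Re}w=H(\operatorname{Im}w)$ for all $t<T$. The remaining points, the algebra and the ODE integration, are routine.
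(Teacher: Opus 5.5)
Your proposal is correct and takes essentially the paper's route: combine \eqref{eq:CLM2} into the pointwise Riccati equation $w_t=w^2$ for $w=v+iu$, integrate it as $1/w=1/w_0-t$ (the paper's $z=1/(f+ig-t)$ with $f+ig=1/w_0$), and read off the real and imaginary parts. The paper does not discuss propagation of $v=H(u)$ at all, since it treats \eqref{eq:CLM2} as already local. If you keep that extra step, note that it needs $1-tW_0(\zeta)\neq0$ on the whole open half-plane for $t<T$, where $W_0$ is the half-plane extension of $w_0$, and not merely $1-tw_0\neq0$ on the real line. This does hold, by an open-mapping argument: a real interior value of $W_0$ exceeding $1/T$ would force a boundary point of the image, hence some $w_0(x)$ with $u_0(x)=0$ and $v_0(x)>1/T$, contradicting the definition of $T$.
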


\begin{theorem}[Constantin--Lax--Majda breakdown criterion]\label{thm:CLM-2}
	The smooth solution to the CLM system~\eqref{eq:CLM2} blows up in finite time if and only if the set
	\[
	Z:=\{x\in\mathbb{R}: u_0(x)=0 \text{ and } v_0(x)>0\}
	\]
	is nonempty. If $M:=\max_{x\in Z} v_0(x)$ and $\bar x\in Z$ satisfies $v_0(\bar x)=M$, then the earliest blow-up time is
	\[
	T=\frac{1}{M},
	\]
	and $v(t,\bar x)\to +\infty$ as $t\uparrow T$. Moreover, at such a blow-up point one has $u(t,\bar x)\equiv 0$ for all $t$, so the singularity is carried by the $v$-component.
\end{theorem}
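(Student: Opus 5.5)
The plan is to work pointwise in $x$, using the explicit formula of Theorem~\ref{thm:CLM}. Since \eqref{eq:CLM2} contains no spatial derivatives, each $x$ evolves independently. Writing $z=v+iu$, the solution is $z(t,x)=z_0(x)/(1-tz_0(x))$ with $z_0=v_0+iu_0$. The common denominator is
\[
D(t,x)=[1-tv_0(x)]^2+t^2u_0^2(x)=|1-tz_0(x)|^2 .
\]
So the solution stays smooth on $[0,t_*)\times\R$ exactly as long as $D$ has no zeros there. Numerators and denominator are smooth in $(t,x)$, so the only obstruction is a zero of $D$.

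\textbf{Locating the zeros of $D$.} $D(t,x)=0$ requires both $tu_0(x)=0$ and $1-tv_0(x)=0$. For $t>0$ this means $u_0(x)=0$ and $t=1/v_0(x)$ with $v_0(x)>0$, that is, $x\in Z$ and $t=1/v_0(x)$.

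\textbf{If $Z$ is empty.} Then $D>0$ for all $t\ge 0$. The solution is then global, and the ``only if'' direction follows. One step needs care: $D$ must not degenerate at $|x|\to\infty$ in finite time. This holds because $u_0,v_0\to0$ by the decay assumption and the standard decay of the Hilbert transform, so $D\to1$ uniformly on compact time intervals.

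\textbf{If $Z$ is nonempty.} On $Z$ we have $u\equiv0$, so the first equation is consistent with $u(t,\bar x)\equiv0$. There $v$ solves $v_t=v^2$, hence $v(t,\bar x)=v_0(\bar x)/(1-tv_0(\bar x))$, which tends to $+\infty$ as $t\uparrow 1/v_0(\bar x)$. The earliest zero of $D$ over all of $Z$ occurs at $t=1/\sup_Z v_0$.

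\textbf{Attainment of $M$.} The expected main obstacle is that the maximum $M$ is actually attained, and that no point near the boundary of $Z$ produces blow-up earlier. I would handle this as follows.
\begin{itemize}
\item $Z=\{u_0=0\}\cap\{v_0>0\}$.
\item $v_0$ is continuous and decays at infinity.
\item For any $\epsilon>0$, the set $\{u_0=0,\ v_0\ge\epsilon\}$ is closed and bounded, hence compact.
\item Therefore $\sup_Z v_0$ is attained at some $\bar x$.
\end{itemize}
This is exactly the $M$ in the statement, giving $T=1/M$.

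\textbf{No earlier blow-up.} For $t<T$ the infimum of $D$ over $x$ is positive. I would argue by contradiction: if $D(t_k,x_k)\to0$ with $t_k\le t<T$, then decay forces $x_k$ to stay bounded. Passing to a limit $x_k\to x_\infty$ gives $D(t_\infty,x_\infty)=0$, so $x_\infty\in Z$ with $1/v_0(x_\infty)\le t<T$. This contradicts the maximality of $M$. Hence the solution remains smooth on $[0,T)$ and $v(t,\bar x)\to+\infty$ as $t\uparrow T$, which also shows that the singularity is carried by $v$.
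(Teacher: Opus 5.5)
Your proposal is correct and follows essentially the paper's route. The paper gives no separate proof of this statement: it is read off from the explicit formula \eqref{eq:CLM-2-u-v}, whose denominator vanishes exactly when $u_0(x)=0$ and $t\,v_0(x)=1$, and the paper's proof of the analogous Theorem~\ref{thm:CLM-q} uses the same pointwise-in-$x$ analysis followed by taking the earliest blow-up time over $Z$. Your two compactness steps, namely attainment of $M$ via the decay of $v_0=H(u_0)$ and the uniform positive lower bound on $D$ for $t<T$, supply the uniformity in $x$ that the paper leaves implicit.
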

If $u^2$ is solved from \eqref{eq:CLM2}(2) and substituted into \eqref{eq:CLM2}(1), then one obtains a second order ODE for $v(t,x)$. Setting $\tau=6t$, $v(t,x)=V(\tau,x)$, and $u(t,x)=U(\tau,x)$, we can convert this ODE to the $q=2$ version of the following ODE (CLM-$q$):
\begin{equation}\label{eq:CLM-q}
	\left\{
	\begin{aligned}
		V_{\tau\tau}&=VV_\tau-\tfrac{q}{2(q+1)^2}V^3,\\
		V(0)&=V_0,\\
		V_\tau(0)&=\tfrac{1}{2(q+1)}\bigl(V_0^2-U_0^2\bigr)
	\end{aligned}\right.
\end{equation}
	
\subsection{\texorpdfstring{System (E$m$)}{System (Em)}}
	We now introduce a stream function $\tilde \psi$ and augment~\eqref{eq:Em} with two additional relations, obtaining system~\eqref{E2}. To reserve the symbols $(u,v,g,p)$ for later perturbation variables, we place tildes on the background unknowns. Thus system (E$m$) in~\eqref{E2} consists of five dependent variables $(\tilde u,\tilde v,\tilde g,\tilde p,\tilde \psi)$, viewed as even functions of $(r,z)$ on the meridian plane, together with seven equations; the last one defines $\tfrac{\D}{\D t}$. For future convenience, we also introduce $t$-scaling parameter $\lambda$ and $z^2$-scaling parameter $\mu$.
	
\begin{equation}\label{E2}
\left\{\begin{aligned}
0&=\tfrac{\D}{\D t}\,\tilde u
- \tfrac12m^2\,\tilde u\,\tilde v, \qquad(t,r,z)\in [0,T)\times\R^2\\
0&=\tfrac{\D}{\D t}\,\tilde v
 -\tilde v^{2}+\tilde u^{2}-\tfrac{1}{r}\tilde p_{r}, \\
0&=\tfrac{\D}{\D t}\,\tilde g
 +\tilde g^{2}+\tfrac{\mu}{z}\,\,\tilde p_{z},\\
0&=z\partial_z\tilde g-r\partial_r\tilde v+\tilde g-m\tilde v, \\
0&=\tilde v-\tilde\psi-z\partial_z\tilde\psi, \\
0&=\tilde g-m\tilde\psi-r\partial_r\tilde\psi, \\
\tfrac{\D}{\D t}:&=\lambda\partial_t-\tilde
v\,r\partial_r+\tilde g\,z\partial_z.
\end{aligned}\right.
\end{equation}

\begin{remark}[No redundancy]	
	Substituting \eqref{E2}(5) and \eqref{E2}(6) into \eqref{E2}(4) yields an identity, so no redundancy is introduced.
\end{remark}

\subsection{Polar coordinates \texorpdfstring{$(x=\sqrt{r^2+z^2},\ \theta=\arctan(z/r))$}{(x=r, theta=arctan(z/r))}}
We use polar coordinates on the meridian plane:
\begin{equation}\label{eq:polarRxi}
	r=x \cos\left(\theta\right), \quad z=x \sin\left(\theta\right).
\end{equation}	
\begin{remark}
	The polar coordinates $(x,\theta)$ on the meridian plane $(r,z)$ are also the spherical coordinates $(x,\theta,\phi)$ (with north pole at $\theta=\pi/2$) for 3D axisymmetric functions in $\R^3$.
	The full symmetry geometry contains the four distinguished axis directions
	\[
	\theta=0,\quad \theta=\pm{\pi}/{2},\quad \theta=\pi.
	\]
	For the perturbation and elliptic analysis below we restrict to the first-quadrant wedge
	\[
	x\geq 0,\qquad \theta\in\left[0,{\pi}/{2}\right].
	\]
	The other three quadrants can be treated in the same way by the corresponding symmetry extension.
\end{remark}

\subsection{Background}

We write the background solutions as
\begin{equation}\label{eq:background-0}
	\begin{aligned}
		\tilde u&=U(t,x,\theta),\quad
		\tilde v=V(t,x,\theta),\quad\tilde g=G(t,x,\theta),\quad
		\tilde p=P(t,x,\theta).
	\end{aligned}
\end{equation}

After substituting~\eqref{eq:full_def} into the first four equations in~\eqref{E2}, we obtain four equations with the following structure:

\begin{equation}\label{eq:ZERO-1234}
\left\{
\begin{aligned}
\lambda U_t-\tfrac12m^2V\,U&=-x U_{x}W\qquad\quad\,\,\,+J_1\cdot K_1(U_\theta),\\[2mm]
\lambda V_{t}-V^2+U^2&=-x V_{x} W+\tfrac{1}{x}P_{x}\,\,\,+J_2\cdot K_2(V_\theta,P_\theta),\\[2mm]
\lambda G_{t}+G^2&=-x G_{x} W-\tfrac{\mu}{x}P_{x}\,\,+J_3\cdot K_3(G_\theta,P_\theta),\\[2mm]
G-mV&=-x \partial_x W\quad\qquad\,\,\,+J_4\cdot K_4(V_\theta+G_\theta),\\
W:&=G\sin ^2(\theta ) -V\cos ^2(\theta ).
		\end{aligned}
\right.
\end{equation}

where
\begin{equation}\label{eq:ZERO-1234-2}
	\left\{
	\begin{aligned}
		J_1&=\bigl\{-\tfrac12\sin(2\theta ) (G+V)\bigr\},\\
		K_1&=\{U_\theta\},\\[2mm]
		J_2&=\bigl\{-\tfrac12\sin(2\theta)(G+V),-\tfrac{\tan(\theta)}{x^2}\bigr\}\\
		K_2&=\{V_\theta,P_\theta\},\\[2mm]
		J_3&=\bigl\{-\tfrac12\sin (2\theta )(G+V),-\tfrac{\mu\cot(\theta)}{x^2}\bigr\},\\
		K_3&=\{G_\theta,P_\theta\}\\[2mm]
		J_4&=\bigl\{-\tfrac12\sin (2\theta ),-\tfrac12\sin (2\theta )\bigr\},\\
	K_4&=\{V_\theta,G_\theta\}.
	\end{aligned}
	\right.
\end{equation}

We now examine how the equations simplify under the following ridge-flat ansatz (in the directions normal to the ridge):
\begin{equation}\label{eq:flat}
\left\{\begin{aligned}
	&(P_\theta,V_\theta,U_\theta,G_\theta)|_{\theta_0}=0,\qquad\theta_0=0,\pi,\quad x\geq 0,\quad t\in [0,T),\\[2mm]
	&(P_\theta,V_\theta,U_\theta,G_\theta)|_{\theta_1}=0,\qquad\theta_1=\pm\tfrac{\pi}{2},\quad x\geq 0,\quad t\in [0,T).
\end{aligned}\right.
\end{equation}

\begin{remark}
	We notice that the ansatz \eqref{eq:flat} is equivalent to the statement that $(U,V,G,P)(t,x,\theta)$ are even functions of $(r,z)=(x\cos\theta,x\sin\theta)$. As noted in Remark~\ref{rem:symetric-in-rz}, if the initial conditions have this symmetry, the dynamical equations preserve it. Thus the ridge-flatness ansatz is automatically preserved by the dynamics.
\end{remark}
	
\begin{remark}
Let $\phi(\theta)$ be the end-vanishing smooth interval function 
\begin{equation}\label{eq:end-vanishing-phi}
	\phi(\theta):=\exp\bigl(-\sin(2\theta)^{-2}\bigr).
\end{equation}
Here and below, this formula is understood with the standard smooth extension $\phi=0$ at the zeros of $\sin(2\theta)$.
Then the following initial conditions can simultaneously fulfill the requirement of ridge flatness at $\theta\in\{-\tfrac{\pi}{2},0,\tfrac{\pi}{2},\pi\}$.
	\begin{equation}\label{initial-condition-UVG}
	\left\{\begin{aligned}
		U(0,x,\theta)&=Bx^2\exp\bigl(-B_1 x^2(1+B_2\phi(\theta))\bigr),\quad B,B_1,B_2>0\\
		V(0,x,\theta)&=A\exp\bigl(-A_1 x^2(1+A_2\phi(\theta))\bigr),\quad A,A_1,A_2>0\\
		G(0,x,\theta)&=C\exp\bigl(-C_1 x^2(1+C_2\phi(\theta))\bigr),\quad C,C_1,C_2>0.
	\end{aligned}\right.
	\end{equation}
\end{remark}

\subsection{Symmetry axes and axis-restricted functions}

\begin{theorem}[System~\eqref{E2} restricted to the symmetry axes $\theta=\theta_0,\theta_1$]\label{thm:symmetry-axis-ridge-functions}
\begin{equation}
\theta=\theta_0=0,\pi\qquad\theta=\theta_1=\pm\tfrac{\pi}{2}.
\end{equation}

	(A) The dynamics of the axis-restricted functions $\{U,V,G,P\}(t,x,\theta_0)$ is determined by the following $1+1$-dimensional convective reduction (R0) for $u(t,x):=U(t,x,\theta_0)$ and $v(t,x):=V(t,x,\theta_0)$. Away from $x=0$ the convective terms remain present, but at the apex $x=0$ the system closes exactly.
	
\begin{equation}\label{eq:ray-1D-system-general}
	\left\{
\begin{aligned}
	&\lambda u_t=xvu_x+\tfrac{m^2}{2}vu,\\[2mm]
	&\lambda \bigl(v+\tfrac{1}{\mu+m}xv_{x}\bigr)_t=\tfrac{\mu-m^2}{\mu+m}v^2-\tfrac{\mu}{\mu+m} u^2\\
	&\qquad\qquad\qquad\quad\,\,\,+\tfrac{\mu+1-m}{\mu+m}xvv_x-\tfrac{1}{\mu+m}x^2\bigl(v_x^{\,\,2}-vv_{xx}\bigr),
\end{aligned}\right.
\end{equation}
and two equations for determining $p(t,x):=P(t,x,\theta_0)$ and $g(t,x):=G(t,x,\theta_0)$.
\begin{equation}\label{eq:ridge-dynamics-2}
	\left\{\begin{aligned}
		g&=mv+xv_x,\\
		p_{x}&=x\bigl(\lambda v_{t}+u^2-v^2-x vv_{x}\bigr).
	\end{aligned}\right.
\end{equation}	

(B) At the apex $x=0$, \eqref{eq:ray-1D-system-general} reduces to the closed pointwise ODE
\begin{equation}\label{eq:ray-1D-system-general-x-0}
	\left\{
	\begin{aligned}
		\lambda u_t&=\tfrac{m^2}{2}vu,\\
		\lambda v_t&=\tfrac{\mu-m^2}{\mu+m}v^2-\tfrac{\mu}{\mu+m}u^2,
	\end{aligned}\right.
\end{equation}
This exact closure at $x=0$ is the mechanism for finite-time apex blow-up.  We retain the $x\to\infty$ observation only as a qualitative asymptotic remark, not as the main singularity mechanism.

If one further sets 
\begin{equation}\label{eq:lambda-mu}
\lambda=\tfrac{q+1}{q}m^2,\qquad \mu=\tfrac{2q+m}{2q-m^2}m^2,\qquad q>\tfrac{m^2}{2},
\end{equation}

Then \eqref{eq:ray-1D-system-general-x-0} becomes
\begin{equation}\label{eq:ray-1D-system-general-x-0-A}
	\left\{
	\begin{aligned}
		u_t&=\tfrac{q}{2(q+1)}uv,\\
		v_t&=\tfrac{1}{2(q+1)}\bigl(v^2-\tfrac{2q+m}{m(m+1)}u^2\bigr),
	\end{aligned}\right.
\end{equation}
If $u^2$ is solved from \eqref{eq:ray-1D-system-general-x-0-A}(2) and substituted into \eqref{eq:ray-1D-system-general-x-0-A}(1), then one obtains a second order ODE of the $\mathrm{CLM}$-q type \eqref{eq:CLM-q} for $v$:
\begin{equation}\label{eq:CLM-q-2}
	\left\{
	\begin{aligned}
		v_{tt}&=vv_t-\tfrac{q}{2(q+1)^2}v^3,\\
		v(0)&=v_0,\\
		v_t(0)&=\tfrac{1}{2(q+1)}\bigl(v_0^2-\tfrac{2q+m}{m(m+1)}u_0^2\bigr)
	\end{aligned}\right.
\end{equation}

	(C) The dynamics of the axis-restricted functions $\{U,V,G,P\}(t,x,\theta_1)$ is likewise described by the following $1+1$-dimensional convective reduction (Z0) for $\bar u(t,x):=U(t,x,\theta_1)$ and $\bar g(t,x):=G(t,x,\theta_1)$. Again, convection persists away from the origin, while the apex dynamics closes exactly at $x=0$.

\begin{equation}\label{eq:ray-1D-system-general-3}
	\left\{
	\begin{aligned}
		&\lambda\bar u_t=\tfrac {m}{2}\bar g\bar u+\tfrac {m}{2}x\bar u\bar g_x-x\bar g\bar u_x,\\[2mm]
		&\lambda\bigl(\bar g+\tfrac{\mu}{\mu+m}x\bar g_{x}\bigr)_t=\tfrac{(\mu-m^2)}{m(\mu+m)}\bar g^2-\tfrac{\mu m}{\mu+m}\bar u^2-\tfrac{2\mu(m-1)+m^2}{m(\mu+m)}x\bar g\bar g_x\\
		&\qquad\qquad\qquad\quad\,\,\,+\tfrac{\mu}{m(\mu+m)}x^2\bigl((\bar g_x)^2-m\bar g\bar g_{xx}\bigr),
	\end{aligned}\right.
\end{equation}
and two equations for determining $\bar p(t,x):=P(t,x,\theta_1)$ and $\bar v(t,x):=V(t,x,\theta_1)$.
\begin{equation}\label{eq:ridge-dynamics-4}
	\left\{\begin{aligned}
		\bar v&=\tfrac1m(\bar g+x\bar g_x),\\
		\bar p_{x}&=-\tfrac{1}{\mu}x\bigl(\lambda\bar g_{t}+\bar g^2+x \bar g\bar g_{x}\bigr).
	\end{aligned}\right.
\end{equation}	

(D) At the apex $x=0$, \eqref{eq:ray-1D-system-general-3} reduces to the closed pointwise ODE
\begin{equation}\label{eq:ray-1D-system-general-3-x-0}
	\left\{
	\begin{aligned}
		\lambda\bar u_t&=\tfrac m2\bar g\bar u,\\
		\lambda\bar g_t&=\tfrac{(\mu-m^2)}{m(\mu+m)}\bar g^2-\tfrac{\mu m}{\mu+m}\bar u^2.
	\end{aligned}\right.
\end{equation}
We remark that \eqref{eq:ray-1D-system-general-3-x-0} is identical to \eqref{eq:ray-1D-system-general-x-0} if $\bar g$ is replaced by $mv$.

\begin{remark}\label{rem:convective-terms}
The convective terms do not vanish identically on the symmetry axes away from the origin. However, each such term carries at least one factor of $x$ or $x^2$. Consequently, all convective contributions vanish at the apex $x=0$, and the apex dynamics remains closed there.
\end{remark}
\begin{remark}\label{rem:1D-model-diff}
Although the systems $(R0,Z0)$ resemble the earlier $(1+1)$D models studied separately, there is an important difference here: $(R0,Z0)$ are derived exactly from the symmetry-axis restriction of the Euler-reduced $(1+2)$D system. Their roles in the present manuscript are therefore intrinsic, not auxiliary.
\end{remark}

\end{theorem}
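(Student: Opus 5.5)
The plan is to work directly from the polar form \eqref{eq:ZERO-1234}--\eqref{eq:ZERO-1234-2} and restrict each equation to a symmetry axis, using the ridge-flat ansatz \eqref{eq:flat}. Under the evenness of $(U,V,G,P)$ in $(r,z)$, the remark following \eqref{eq:flat} shows that this ansatz is propagated.

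\textbf{Horizontal axis.} At $\theta=\theta_0$ we have $\sin(2\theta)=0$, $\sin^2\theta=0$ and $\cos^2\theta=1$, so $W=-V$.
\begin{itemize}
\item The factors $J_1,J_2$ and the first entries of $J_3,J_4$ vanish.
\item The singular coefficients $\tan\theta/x^2$ multiply $P_\theta$, which vanishes at $\theta_0$.
\item The term $\mu\cot\theta\,P_\theta/x^2$ is of the form $0\cdot\infty$. I will evaluate it by L'H\^opital in $\theta$: $\cot\theta\,P_\theta\to P_{\theta\theta}(t,x,0)$.
\end{itemize}
The $U$ and $V$ equations then give the first equation of \eqref{eq:ray-1D-system-general} (with $U_x$ from $-xU_xW=xvu_x$) and $\lambda v_t-v^2+u^2=xvv_x+\tfrac1x p_x$. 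The latter is exactly the pressure relation \eqref{eq:ridge-dynamics-2}(2). The divergence equation gives $G-mV=x\partial_xV$, i.e.\ \eqref{eq:ridge-dynamics-2}(1).

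\textbf{Eliminating $G$ and $P_{\theta\theta}$.} The hard step is that the $G$ equation involves $P_{\theta\theta}$, which is not part of the axis data. I would remove it with a second-order Taylor expansion in $\theta$ of the Cartesian equations \eqref{E2}(2),(3) near the axis. Writing $p_r/r$ and $p_z/z$ in polar form, evenness gives the leading term $p_z/z|_{z=0}=p_{zz}$. I would then use the incompressibility relation together with \eqref{E2}(5),(6) for $\tilde\psi$, differentiated in $\theta$, to relate $P_{\theta\theta}$ to $x$-derivatives of $P$ along the axis. Subtracting a suitable multiple $\mu$ times the $V$ equation from the $G$ equation should cancel the pressure. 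Substituting $g=mv+xv_x$ and $g_t=mv_t+xv_{xt}$ should then produce the combination $\lambda(v+\tfrac{1}{\mu+m}xv_x)_t$ with the stated coefficients, including the $x^2(v_x^2-vv_{xx})$ term that comes from $gg$ and $xg_x$ products.

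I expect this pressure-elimination step to be the main obstacle. If the $\theta$-Taylor route does not close, I would instead verify the final identity directly by substituting \eqref{eq:ridge-dynamics-2} into the restricted $G$ equation and matching coefficients. I would also check it against the $x=0$ limit.

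\textbf{Vertical axis and apex limits.} Part (C) follows by the same procedure at $\theta_1$. There $W=G$, $\cot\theta=0$, and $\tan\theta\,P_\theta$ becomes the indeterminate term. The roles are exchanged: the divergence equation gives $\bar g-m\bar v=-x\bar g_x$, which is \eqref{eq:ridge-dynamics-4}(1), and the $G$ equation gives the pressure relation. For parts (B) and (D), every convective term carries a factor $x$ or $x^2$, as in Remark~\ref{rem:convective-terms}. Setting $x=0$, assuming smoothness (so $xv_x\to0$ and $x^2v_{xx}\to0$), gives \eqref{eq:ray-1D-system-general-x-0} and \eqref{eq:ray-1D-system-general-3-x-0}. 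Replacing $\bar g=mv$ shows the two apex systems are identical.

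\textbf{Parameter substitution and CLM-$q$ form.} Substituting \eqref{eq:lambda-mu} is algebra. First, $\tfrac{m^2}{2\lambda}=\tfrac{q}{2(q+1)}$. Next, $\mu+m=\tfrac{m(m+1)2q}{2q-m^2}$, which gives the coefficients in \eqref{eq:ray-1D-system-general-x-0-A}. Finally, differentiate the $v$ equation, eliminate $u^2$ and $uu_t$ using the two equations, and obtain \eqref{eq:CLM-q-2}.
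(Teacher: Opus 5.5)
Your restriction of the $U$-, $V$- and divergence equations at $\theta_0$ agrees with the paper. Your observation that $-\mu\cot\theta\,P_\theta/x^2$ is a $0\cdot\infty$ term with limit $-\mu x^{-2}P_{\theta\theta}(t,x,\theta_0)$ is also correct. The paper's proof does not keep this term. It sets $\theta=\theta_0$ in \eqref{eq:ZERO-1234} and discards every $J_i\cdot K_i$ contribution, so the $G$-equation in \eqref{eq:ridge-dynamics} contains only $-\tfrac{\mu}{x}P_x$. It then substitutes $x^{-1}P_x$ from the $V$-equation and $G=mV+xV_x$ into the $G$-equation, which is exactly your fallback. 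At $\theta_1$ it likewise drops $-\tan\theta\,P_\theta/x^2$, whose limit is $+x^{-2}P_{\theta\theta}(t,x,\theta_1)$.

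The genuine gap is your main step: $P_{\theta\theta}$ cannot be eliminated.
\begin{itemize}
\item Neither \eqref{E2}(4) nor \eqref{E2}(5),(6) contains $\tilde p$, so differentiating them in $\theta$ gives no information about the pressure.
\item On the axis, the restricted $V$-, $G$- and constraint equations contain two pressure unknowns, $P_x$ and $P_{\theta\theta}$. There is only one relation between them: the time derivative of $g=mv+xv_x$. This is the axis trace of the pressure equation $\mu\tilde p_{zz}+\tilde p_{rr}+\tfrac{m-1}{r}\tilde p_r=Q$, with $Q$ quadratic in $(\tilde u,\tilde v,\tilde g)$, obtained from \eqref{E2}(2)--(4). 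Your elimination of $P_x$ and $G$ already uses up this relation.
\item Keeping the term, the same computation gives the second equation of \eqref{eq:ray-1D-system-general} with an extra right-hand side $-\tfrac{\mu}{\mu+m}x^{-2}P_{\theta\theta}(t,x,\theta_0)$. Here $x^{-2}P_{\theta\theta}(t,x,0)=\tilde p_{zz}(x,0)-\tilde p_r(x,0)/x$ is a transverse quantity fixed by the global elliptic problem, not by the axis data $(u,v)$.
\item The term also survives at the apex, where it tends to $(\tilde p_{zz}-\tilde p_{rr})(t,0,0)$. It carries no factor of $x$, so your appeal to Remark~\ref{rem:convective-terms} for (B) does not remove it. Using the exact equations at the origin together with $g=mv$ there, the $v$-equation of \eqref{eq:ray-1D-system-general-x-0} holds if and only if $\tilde p_{rr}=\tilde p_{zz}$ at the origin. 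That is an isotropic pressure Hessian, as in restricted-Euler models.
\item Ridge flatness does not supply this. For example, $\tilde p=r^2-z^2$ is even but has $P_{\theta\theta}=\mp4x^2$ on $\theta=0,\tfrac{\pi}{2}$.
\end{itemize}
So (A)--(D) can only be reached by adding the closure hypothesis $P_{\theta\theta}(t,x,\theta_0)=P_{\theta\theta}(t,x,\theta_1)=0$, which the paper's computation uses implicitly. You should state it explicitly as a hypothesis rather than try to derive it. Your parameter substitution and CLM-$q$ algebra are fine.
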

\begin{proof}[\textbf{Proof} of \cref{thm:symmetry-axis-ridge-functions}]
	
	Applying the ridge flat ansatz \eqref{eq:flat} and setting $\theta=\theta_0=0 \text{ or }\pi$ in \eqref{eq:ZERO-1234} leads to the \textbf{horizontal symmetry-axis reduction}
	
	\begin{equation}\label{eq:ridge-dynamics}
\left\{
		\begin{aligned}
			\lambda U_t&=xVU_x+\tfrac12m^2V\,U,\\[2mm]
			\lambda V_t&=V^2-U^2+xVV_x+\tfrac1x P_x,\\[2mm]
			\lambda G_t&=-G^2+xVG_x-\tfrac{\mu}{x}P_x,\\[2mm]
			G&=mV+xV_x.
		\end{aligned}\right.
\end{equation}
Separating $(U_t,V_t)$ from $(G,P_x)$ yields~\eqref{eq:ray-1D-system-general} and \eqref{eq:ridge-dynamics-2}.
This proves Claim (A).\\

If we define $y:=1/x,\tilde f(y):=f(x),f\in\{u,v\}$, then
\begin{equation}
	\left\{\begin{aligned}
		&x\partial_x f(x)=-y\partial_{y}\tilde f(y),\\  
		&x^2(\partial_x)^2f(x)=2y\partial_{y}\tilde f(y)+y^2(\partial_{y})^2\tilde f(y).
	\end{aligned}\right.
\end{equation}
So the terms $\bigl(xvu_x,xv_{tx},x^2(v_x)^2-x^2vv_{xx}\bigr)$ in \eqref{eq:ray-1D-system-general} vanish at $x=0$, and they also vanish formally as $x\to\infty$ after the inversion $y=1/x$. This proves Claim (B).  For rev5, the essential point is the exact closure at the apex $x=0$; the $x\to\infty$ limit is only a secondary consistency check.  \\

	Applying the ridge flat ansatz \eqref{eq:flat} and setting $\theta=\theta_1=\pm\tfrac{\pi}{2}$ in \eqref{eq:ZERO-1234} leads to the \textbf{vertical symmetry-axis reduction}

\begin{equation}\label{eq:ridge-dynamics-B}
	\left\{
	\begin{aligned}
		\lambda U_t&=\tfrac12m^2V\,U-xGU_x,\\[2mm]
		\lambda V_t&=V^2-U^2-xGV_x+\tfrac1x P_x,\\[2mm]
		\lambda G_t&=-G^2-xGG_x-\tfrac{\mu}{x}P_x,\\[2mm]
		V&=\tfrac1m(G+xG_x).
	\end{aligned}\right.
\end{equation}
Separating $(U_t,G_t)$ from $(V,P_x)$ yields~\eqref{eq:ray-1D-system-general-3} and \eqref{eq:ridge-dynamics-4}.
This proves Claim (C). The claim (D) can be similarly proved. This completes the \textbf{Proof} of \cref{thm:symmetry-axis-ridge-functions}.
	\end{proof}
	
\begin{theorem}[Blow-up set criterion for CLM-$q$]\label{thm:CLM-q}
	A solution to the differential equation in \eqref{eq:CLM-q} blows up in finite time if and only if the set
	\begin{equation}
		Z := \{x \,:\, b(x)=0\ \text{and}\ a(x)>0\}
	\end{equation}
	is nonempty. Let $\bar x\in Z$ satisfy $a(\bar x)=\max_{x\in Z} a(x)$. Then $U(t,\bar x)\equiv0$ and
		$V(t,\bar x)\to+\infty$ as $t\uparrow T=\tfrac{2(q+1)}{M}$, where $M=a(\bar x)$.
	\end{theorem}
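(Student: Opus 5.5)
We read $a(x)=V_0(x)$ and $b(x)=U_0(x)$, with $x$ only a parameter. The plan is to work pointwise in $x$ with the first-order system that generates \eqref{eq:CLM-q}. By the derivation preceding \eqref{eq:CLM-q}, and by its analogue \eqref{eq:ray-1D-system-general-x-0-A}, this system is
\[
U_\tau=\tfrac{q}{2(q+1)}UV,\qquad V_\tau=\tfrac{1}{2(q+1)}\bigl(V^2-U^2\bigr),
\]
with data $(a(x),b(x))$. Eliminating $U^2$ recovers the second-order equation and its initial slope, so it suffices to study this planar ODE. Set $c=\tfrac1{2(q+1)}$, so that $U_\tau=qcUV$. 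By local Lipschitz uniqueness, $U\equiv0$ is invariant and $U$ keeps the sign of $b$.

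\emph{Case $b(x)=0$.} Here $U\equiv0$ and $V_\tau=cV^2$. This gives $V=a/(1-ca\tau)$. If $a>0$, then $V\to+\infty$ as $\tau\uparrow 1/(ca)=2(q+1)/a$. If $a\le0$, then $V$ stays bounded for $\tau\ge0$. This yields the blow-up direction. Taking the supremum over $Z$ shows that the earliest blow-up time is $2(q+1)/M$, attained at $\bar x$, where $U(\tau,\bar x)\equiv0$.

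\emph{Case $b(x)\ne0$.} Here we must show global boundedness, and the plan is to use a first integral. We look for $H=(V^2+\gamma U^2)|U|^{-2/q}$. Differentiating gives
\[
\tfrac{d}{d\tau}(V^2+\gamma U^2)=2cV\bigl(V^2+(\gamma q-1)U^2\bigr).
\]
On the other hand, $\tfrac{d}{d\tau}\log|U|^{2/q}=2cV$. Hence $H$ is conserved exactly when $\gamma q-1=\gamma$, that is, $\gamma=1/(q-1)$.

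For $q>1$, conservation of $H$ gives
\[
\tfrac{1}{q-1}U^2\le H|U|^{2/q}\quad\Longrightarrow\quad |U|^{2-2/q}\le (q-1)H .
\]
Thus $U$ is bounded, and then $V^2\le H|U|^{2/q}$ is bounded as well. So the solution is global whenever $b\ne0$, which proves the ``only if'' direction. This covers the case $q=2$ coming from the original CLM system.

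The main obstacle is the range $q\le1$, which is allowed by \eqref{eq:lambda-mu} since $q>m^2/2\ge\tfrac12$. For $q<1$ the integral $H$ has $\gamma<0$ and is no longer coercive. For $q=1$ the exponent degenerates and one needs the logarithmic integral $V^2/U^2+2\log|U|$. For these cases I would first try a phase-plane argument instead.

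Blow-up can only occur with $V\to+\infty$, since $V_\tau\ge -cU^2$ and $U$ stays controlled while $V$ is bounded above. Suppose $V\to+\infty$ on $[0,T)$. Then $|U|=|b|\exp\bigl(qc\int V\bigr)$ grows at least as fast as a power of $V$. Along the conserved level set of the (possibly logarithmic) first integral, this should force $U^2$ to dominate $V^2$ eventually. Then $V_\tau<0$, which contradicts $V\to+\infty$. Making this comparison quantitative for every $q\in(\tfrac12,1]$ is the step I expect to require the most care.
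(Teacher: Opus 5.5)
Your argument is correct in the range where the paper states and uses the theorem, namely $q=m+1\in\{2,3\}$; the paper's Case~2 explicitly assumes $q>1$. It is essentially the paper's proof. Case~1 is the same Riccati computation. Your invariant $H=(V^2+\tfrac{1}{q-1}U^2)|U|^{-2/q}$ is the paper's first integral \eqref{eq:FI} written in other variables: with $w=1-U^2/V^2$ one gets $C_*=((q-1)V^2+U^2)^q/U^2=((q-1)H)^q$.

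Working in $(U,V)$ rather than $(v,w)$ is a genuine simplification. Coercivity of $H$ bounds $(U,V)$ in one line. You therefore need neither the restart at $a=0$ nor the turning-amplitude and ``clockwise'' phase-portrait discussion that the paper uses to argue that $v$ stays bounded.

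Your last two paragraphs, however, aim at a false statement and should be dropped. For $q<1$ the ``only if'' direction fails. The half-lines $U=\pm\sqrt{1-q}\,V$, $V>0$, are invariant; they are exactly the level set $H=0$, which is why $H$ stops being coercive. On them $V_\tau=cqV^2$, with your $c=\tfrac{1}{2(q+1)}$. Hence the data $a>0$, $b=\sqrt{1-q}\,a\neq0$ blow up at $\tau=2(q+1)/(qa)$.

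More generally, the open sector $|U|<\sqrt{1-q}\,V$ is invariant, and $V_\tau>cqV^2$ there. So every datum in it blows up even though $b\neq0$. The step of your heuristic that breaks is the claim that $U^2$ must eventually dominate $V^2$. Along these orbits $|U|$ grows like $V$ on the rays, and like $V^{q}\ll V$ when $H>0$.

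So a restriction such as $q\ge1$ is genuinely necessary, and the theorem must be read under the paper's standing assumption $q>1$. Your first-integral argument already covers that range completely. The borderline case $q=1$ does follow from your logarithmic integral, but it is not needed.
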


\begin{proof}[Proof of Theorem~\ref{thm:CLM-q}]
	We prove the blow-up characterization pointwise in $x$ for the CLM-$q$ ridge ODE and then take the earliest blow-up over $x$.
	A complete phase-portrait proof is given in Section~\ref{sec:horizontal-dynamics}; see in particular Lemma~\ref{lem:FI} (first integral) and Lemma~\ref{lem:vturn_general} (finite turning amplitude when $b(x)\ne0$).
\end{proof}

\subsection{Components of $\boldsymbol{\omega}=\nabla\times \boldsymbol{v}$ and of $\nabla\boldsymbol{v}$}\label{seq:vorticity-strain}

This subsection has one precise purpose: to connect the explicit apex dynamics obtained from the symmetry-axis reduction with the standard continuation criterion.  The point is a strain lower bound, not a pointwise vorticity lower bound.

For the axisymmetric velocity field
\[
\boldsymbol v=v^r e_r+v^\phi e_\phi+v^z e_z,
\qquad
v^r=-r\widetilde v,
\quad
v^\phi=r\widetilde u,
\quad
v^z=z\widetilde g,
\]
the vorticity components and the cylindrical gradient matrix are
\begin{equation}\label{omega}
\left\{
\begin{aligned}
\omega^r(t,r,z)&=-\partial_z v^\phi=-r\widetilde u_z,\\
\omega^\phi(t,r,z)&=\partial_z v^r-\partial_r v^z=-r\widetilde v_z-z\widetilde g_r,\\
\omega^3(t,r,z)&=\partial_r v^\phi+\frac1r v^\phi=2\widetilde u+r\widetilde u_r,\\[2mm]
(\nabla\boldsymbol v)(t,r,z)
&=\begin{pmatrix}
-(\widetilde v+r\widetilde v_r) & \widetilde u+r\widetilde u_r & z\widetilde g_r\\
-\widetilde u&-\widetilde v&0\\
-r\widetilde v_z&r\widetilde u_z&\widetilde g+z\widetilde g_z
\end{pmatrix}.
\end{aligned}
\right.
\end{equation}
Consequently,
\begin{equation}\label{omega2}
\left\{
\begin{aligned}
|\boldsymbol v|^2&=r^2(\widetilde v^2+\widetilde u^2)+z^2\widetilde g^2,\\
|\boldsymbol\omega|^2&=(r\widetilde u_z)^2+(r\widetilde v_z+z\widetilde g_r)^2+(2\widetilde u+r\widetilde u_r)^2,\\
|\nabla\boldsymbol v|^2&=z^2\widetilde g_r^{\,2}+r^2(\widetilde v_z^{\,2}+\widetilde u_z^{\,2})+\widetilde v^2+\widetilde u^2\\
&\qquad +(z\widetilde g_z+\widetilde g)^2+(r\widetilde v_r+\widetilde v)^2+(r\widetilde u_r+\widetilde u)^2.
\end{aligned}
\right.
\end{equation}

In the polar variables $r=x\cos\theta$, $z=x\sin\theta$, and with
\[
\widetilde u=U(t,x,\theta),\qquad
\widetilde v=V(t,x,\theta),\qquad
\widetilde g=G(t,x,\theta),
\]
one obtains
\begin{equation}\label{omega-2}
\left\{
\begin{aligned}
|\boldsymbol\omega|^2(t,x,\theta)
&=\cos^4\theta\,(U_\theta+x\tan\theta\,U_x)^2\\
&\quad+\cos^4\theta\,\bigl(x\tan\theta\,(G_x+V_x)-\tan^2\theta\,G_\theta+V_\theta\bigr)^2\\
&\quad+\bigl(\cos\theta\,(x\cos\theta\,U_x-\sin\theta\,U_\theta)+2U\bigr)^2,\\[1mm]
|\nabla\boldsymbol v|^2(t,x,\theta)
&=\sin^2\theta\,(G_\theta\sin\theta-xG_x\cos\theta)^2\\
&\quad+\cos^4\theta\,(U_\theta+x\tan\theta\,U_x)^2+U^2\\
&\quad+\cos^4\theta\,(V_\theta+x\tan\theta\,V_x)^2+V^2\\
&\quad+\bigl(\sin\theta\,(G_\theta\cos\theta+xG_x\sin\theta)+G\bigr)^2\\
&\quad+\bigl(\cos\theta\,(x\cos\theta\,U_x-\sin\theta\,U_\theta)+U\bigr)^2\\
&\quad+\bigl(\cos\theta\,(x\cos\theta\,V_x-\sin\theta\,V_\theta)+V\bigr)^2.
\end{aligned}
\right.
\end{equation}

On the horizontal axis $\theta=0$, the ridge-flat ansatz \eqref{eq:flat} gives
\begin{equation}\label{omega-3}
\left\{
\begin{aligned}
|\boldsymbol\omega|^2(t,x,0)&=(2U+xU_x)^2,\\
|\nabla\boldsymbol v|^2(t,x,0)&=G^2+U^2+V^2+(U+xU_x)^2+(V+xV_x)^2.
\end{aligned}
\right.
\end{equation}
On the vertical axis $\theta=\pi/2$, the same ansatz gives
\begin{equation}\label{omega-4}
\left\{
\begin{aligned}
|\boldsymbol\omega|^2(t,x,\tfrac\pi2)&=4\bar u(t,x)^2,\\
|\nabla\boldsymbol v|^2(t,x,\tfrac\pi2)&=2\bar u(t,x)^2+2\bar v(t,x)^2+\bigl(\bar g(t,x)+x\bar g_x(t,x)\bigr)^2.
\end{aligned}
\right.
\end{equation}
Here $(\bar u,\bar v,\bar g)$ denotes the restriction of $(U,V,G)$ to $\theta=\pi/2$.

For smooth incompressible Euler solutions in a Sobolev class above the local well-posedness threshold, the standard continuation criterion may be written in the strain form
\begin{equation}\label{eq:pre-BKM}
\int_0^T\|\nabla\boldsymbol v(t)\|_{L^\infty}\,dt=\infty.
\end{equation}
Equivalently, if the integral is finite, the classical solution can be continued past $T$; hence, at a finite maximal time, the integral must diverge.  This follows from the usual $H^s$ energy estimate controlled by $\|\nabla\boldsymbol v\|_{L^\infty}$ and local well-posedness; see Constantin~\citep{C1986} and Majda--Bertozzi~\citep{MB2002}.  The Beale--Kato--Majda theorem gives the sharper vorticity formulation
\begin{equation}\label{eq:BKM}
\int_0^T\|\boldsymbol\omega(t)\|_{L^\infty}\,dt=\infty
\end{equation}
for 3D Euler~\citep{BKM984}.  In the present argument we only need the strain version \eqref{eq:pre-BKM}.

For any scalar component $F$ on the first-quadrant wedge,
\begin{equation}\label{eq:Linfty-def}
\|F(t)\|_{L^\infty}:=\sup_{x\ge0,\,0\le\theta\le\pi/2}|F(t,x,\theta)|
\ge |F(t,0,0)|.
\end{equation}
Combining \eqref{omega-3} with \eqref{eq:Linfty-def} yields the pointwise lower bound
\begin{equation}\label{eq:delv-Linfty-def}
\|\nabla\boldsymbol v(t)\|_{L^\infty}
\ge |\nabla\boldsymbol v(t,0,0)|
\ge |V(t,0,0)|.
\end{equation}
Similarly, \eqref{omega-4} gives
\begin{equation}\label{eq:delv-Linfty-def-2}
\|\nabla\boldsymbol v(t)\|_{L^\infty}
\ge |\nabla\boldsymbol v(t,0,\tfrac\pi2)|
\ge |G(t,0,\tfrac\pi2)|.
\end{equation}
The second inequality uses $x=0$, where $\bar g+x\bar g_x=\bar g$.

\begin{proposition}[The explicit apex dynamics satisfies the strain blow-up criterion]\label{prop:apex-pre-BKM}
Assume that a smooth solution on $[0,T)$ has the explicit horizontal-axis apex trace obtained in Lemma~\ref{lem:R0-center-ODE}, namely
\begin{equation}\label{eq:R0-center-explicit-3}
U(t,0,0)=0,
\qquad
V(t,0,0)=\frac{2(m+2)}{T-t},
\qquad 0\le t<T.
\end{equation}
Then the strain continuation criterion \eqref{eq:pre-BKM} is satisfied:
\begin{equation}\label{eq:pre-BKM-2}
\int_0^T\|\nabla\boldsymbol v(t)\|_{L^\infty}\,dt=
\infty.
\end{equation}
Conversely, by the continuation criterion, any finite maximal time for a smooth Euler solution must satisfy \eqref{eq:pre-BKM}.  Thus the explicit apex blow-up dynamics verifies the necessary-and-sufficient strain condition for finite-time breakdown.
\end{proposition}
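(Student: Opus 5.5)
The plan is to combine the pointwise lower bound \eqref{eq:delv-Linfty-def} with the explicit apex trace \eqref{eq:R0-center-explicit-3}, and then integrate in time. First, evaluate the horizontal-axis formula \eqref{omega-3} at the apex $x=0$. There all terms carrying a factor $x$ drop out, provided $U_x$ and $V_x$ remain bounded near $x=0$ for each fixed $t<T$; this holds by the smoothness hypothesis on $[0,T)$. This gives
\[
|\nabla\boldsymbol v|^2(t,0,0)=G(t,0,0)^2+2U(t,0,0)^2+2V(t,0,0)^2\ \ge\ V(t,0,0)^2 .
\]
By \eqref{eq:Linfty-def}, this yields $\|\nabla\boldsymbol v(t)\|_{L^\infty}\ge |V(t,0,0)|$ for every $t\in[0,T)$.

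Second, substitute the explicit trace: $|V(t,0,0)|=2(m+2)/(T-t)$. Then
\[
\int_0^{T}\|\nabla\boldsymbol v(t)\|_{L^\infty}\,dt\ \ge\ 2(m+2)\lim_{t'\uparrow T}\int_0^{t'}\frac{dt}{T-t}=2(m+2)\lim_{t'\uparrow T}\log\frac{T}{T-t'}=\infty ,
\]
which is \eqref{eq:pre-BKM-2}. One could also use the $G$ term: by \eqref{eq:ridge-dynamics-2}, $G(t,0,0)=mV(t,0,0)$, which improves the constant but is not needed. The ``conversely'' clause requires no new argument. It is just the standard continuation criterion quoted before the proposition: finiteness of the strain integral permits continuation in $H^s$ (Constantin, Majda--Bertozzi), so any finite maximal time must satisfy \eqref{eq:pre-BKM}.

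The main point needing care is the first step, namely the legitimacy of evaluating $\nabla\boldsymbol v$ at the apex through the reduced variables. Two things must hold there. The cylindrical gradient matrix in \eqref{omega} must be a genuine continuous representation of $\nabla\boldsymbol v$ at $r=z=0$, with the terms $r\widetilde v_r$, $z\widetilde g_z$ and the others vanishing. In addition, the $L^\infty$ norm over the wedge must dominate the value at the single point $(0,0)$. For the continuity I would rely on the parity ansatz, under which $\widetilde u,\widetilde v,\widetilde g$ are smooth even functions of $(r,z)$, so $v^r=-r\widetilde v$ and $v^z=z\widetilde g$ are smooth and their gradients at the origin are $-\widetilde v$ and $\widetilde g$. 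With that, the norm dominates a single entry of the matrix, and the lower bound follows pointwise for each $t<T$. The divergence then comes only from the non-integrability of $(T-t)^{-1}$.
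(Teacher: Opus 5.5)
Your proposal is correct and is essentially the paper's proof. The paper combines the pointwise bound \eqref{eq:delv-Linfty-def} with the explicit trace \eqref{eq:R0-center-explicit-3} to get $\|\nabla\boldsymbol v(t)\|_{L^\infty}\ge 2(m+2)/(T-t)$, integrates this non-integrable bound, and treats the converse as the quoted continuation criterion; you differ only in re-deriving \eqref{eq:delv-Linfty-def} from \eqref{omega-3} at $x=0$ and spelling out the smoothness/parity justification for evaluating the cylindrical gradient at the apex, which the paper leaves implicit.
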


\begin{proof}
The lower bound \eqref{eq:delv-Linfty-def} and the explicit trace \eqref{eq:R0-center-explicit-3} give
\[
\|\nabla\boldsymbol v(t)\|_{L^\infty}
\ge \frac{2(m+2)}{T-t}.
\]
Therefore
\[
\int_0^T\|\nabla\boldsymbol v(t)\|_{L^\infty}\,dt
\ge 2(m+2)\int_0^T\frac{dt}{T-t}=\infty.
\]
This proves \eqref{eq:pre-BKM-2}.  The converse direction is exactly the standard continuation criterion quoted above.
\end{proof}

\section[Phase portrait of the apex ODE]{Phase portrait of the apex ODE system}\label{sec:horizontal-dynamics}
We consider the nonlinear second--order ODE, CLM-$q$, \eqref{eq:CLM-q-2}. This $1+1$D system is not presented as an independent model; rather, it is the exact pointwise ODE satisfied at the apex $x=0$ on the symmetry axes, with the $x\to\infty$ limit retained only as a formal asymptotic consistency check.
\begin{equation}\label{eq:ode}
	v_{tt}=v\,v_t-\alpha v^3,
	\qquad
	\alpha=\tfrac{q}{2(1+q)^2},
\end{equation}
with initial data
\begin{equation}\label{eq:ic}
	v(0)=a\in\mathbb{R},
	\qquad
	v_t(0)=\tfrac{1}{2(1+q)}(a^2-b^2),
	\quad b^2=\tfrac{2q+m}{m(m+1)}u_0^2\ge 0,
\end{equation}
and throughout this manuscript we assume
\[
q=m+1>1,\qquad m=1,2.
\]
Thus for 2D inviscid Boussinesq, we have $\bigl(m=1,q=2>\tfrac{m^2}{2}\bigr)$. This model reduces to the exact CLM-$2$ type (cf. \eqref{eq:CLM-q}). For 3D axisymmetric Euler, we have $\bigl(m=2,q=3>\tfrac{m^2}{2}\bigr)$. This model reduces to the exact CLM-$3$ type.

\subsection{Velocity renormalization and first integral}

Define
\begin{equation}\label{eq:wdef}
	w(t)=\frac{2(1+q)v_t(t)}{v(t)^2},
	\qquad
	v_t=\frac{w}{2(1+q)}v^2.
\end{equation}
On any monotone interval with \(v\neq0\), one can regard \(w\) as a function \(w(v)\).

\begin{lemma}[Reduced equation and first integral]\label{lem:FI}
	On any monotone interval with \(v\neq0\), the function \(w(v)\) satisfies
	\begin{equation}\label{eq:reduced}
		v w \frac{dw}{dv}=-2(w-1)(w-q),
	\end{equation}
	and admits the first integral
	\begin{equation}\label{eq:FI}
		|v|^{2(q-1)}\frac{|w-q|^q}{|w-1|}=C_*>0.
	\end{equation}
	Moreover, for \(a\neq0\),
	\begin{equation}\label{eq:w0}
		w_0:=w(a)=1-\frac{b^2}{a^2}.
	\end{equation}
\end{lemma}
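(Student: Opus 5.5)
The plan is to verify all three claims by direct computation in the renormalized variable $w$ of \eqref{eq:wdef}: convert \eqref{eq:ode} into a first-order equation for $w$ as a function of $v$, then integrate that equation by separation of variables.

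\textbf{Step 1: the reduced equation \eqref{eq:reduced}.} On a monotone interval we have $v_t\neq0$ and $v\neq0$, so $w\neq0$, $t\mapsto v(t)$ is invertible, and $w$ is a $C^1$ function of $v$ with $w_t=\frac{dw}{dv}\,v_t$. Differentiating $v_t=\frac{w}{2(1+q)}v^2$ in $t$ gives
\[
v_{tt}=\frac{v_t\,v}{2(1+q)}\Bigl(v\frac{dw}{dv}+2w\Bigr).
\]
I will equate this with the right side of \eqref{eq:ode} and divide by $v\,v_t\neq0$. In the resulting term $\alpha v^2/v_t$, substitute $v^2/v_t=2(1+q)/w$ and use $4\alpha(1+q)^2=2q$ from \eqref{eq:ode}. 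This yields
\[
v\frac{dw}{dv}+2w=2(1+q)-\frac{2q}{w}.
\]
Multiplying by $w$ and factoring $-2w^2+2(1+q)w-2q=-2(w-1)(w-q)$ gives \eqref{eq:reduced}.

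\textbf{Step 2: the first integral \eqref{eq:FI}.} First, $w\equiv1$ and $w\equiv q$ are constant solutions of \eqref{eq:reduced}. The right side is smooth in $w$ for $v\neq0$, so by uniqueness a solution with $w(a)\notin\{1,q\}$ never reaches these lines. On such a solution I separate variables:
\[
\frac{w\,dw}{(w-1)(w-q)}=-\frac{2\,dv}{v}.
\]
The partial fraction decomposition is
\[
\frac{w}{(w-1)(w-q)}=\frac{1}{q-1}\Bigl(\frac{q}{w-q}-\frac{1}{w-1}\Bigr),
\]
which is valid since $q>1$. Integrating gives $q\ln|w-q|-\ln|w-1|=-2(q-1)\ln|v|+c$. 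Exponentiating gives \eqref{eq:FI} with $C_*=e^{c}>0$, and $C_*$ is evaluated from the initial point.

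\textbf{Step 3: the initial value \eqref{eq:w0}.} I will insert \eqref{eq:ic} into \eqref{eq:wdef} at $t=0$:
\[
w_0=\frac{2(1+q)}{a^2}\cdot\frac{a^2-b^2}{2(1+q)}=1-\frac{b^2}{a^2}.
\]

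The main obstacle is bookkeeping rather than computation. \eqref{eq:FI} only holds, with $C_*$ finite and positive, off the invariant lines $w=1$ and $w=q$. The case $b=0$ gives $w_0=1$, hence $w\equiv1$; this is exactly the blow-up branch $v_t=v^2/(2(1+q))$ and must be recorded separately as the degenerate value $C_*=\infty$. The requirement $v_t\neq0$ (equivalently $w\neq0$) is what the hypothesis ``monotone interval'' provides. Across turning points, where $v_t=0$, one patches monotone intervals; that patching belongs to Lemma~\ref{lem:vturn_general} and is not needed here.
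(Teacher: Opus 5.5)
Your proof is correct. The chain-rule reduction to $v\,w\,\frac{dw}{dv}=-2(w-1)(w-q)$ (using $4\alpha(1+q)^2=2q$), the partial fractions $\frac{w}{(w-1)(w-q)}=\frac{1}{q-1}\bigl(\frac{q}{w-q}-\frac{1}{w-1}\bigr)$, the value $w_0=1-b^2/a^2$, and your caveat that $w\equiv1$ (the case $b=0$) is an invariant line where the constant degenerates all check out.

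The paper states this lemma without a written proof. Your separation-of-variables argument on monotone branches is the routine computation it leaves implicit, and the same $w(v)$ framework is what the paper uses in \eqref{eq:dt_dw_general}. One point you deferred is worth making explicit: the invariant can be rewritten as
\[
|2(1+q)v_t-qv^2|^q\,/\,|2(1+q)v_t-v^2|,
\]
which is a continuous function of $(v,v_t)$. So the constant is automatically the same on adjacent monotone pieces, and this is exactly what Lemma~\ref{lem:vturn_general} needs when it evaluates $C_*$ at $t=0$ and applies it at a turning point.
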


\begin{remark}[Turning points]
	A turning point \(v_t=0\) corresponds to \(w=0\).
\end{remark}

\subsection{Compactification for general \(q>1\)}

The phase variable \(w\) has a coordinate singularity at \(v=0\) because
\(w\propto v_t/v^2\). To visualize trajectories through (or toward) \(v=0\), we use
the compactified variable
\begin{equation}\label{eq:wbar_def}
	\bar w:=\frac{w}{1+|w|}\in(-1,1).
\end{equation}
Then:
\[
w\to+\infty \Longleftrightarrow \bar w\to 1,\qquad
w\to-\infty \Longleftrightarrow \bar w\to -1,
\qquad
w=0 \Longleftrightarrow \bar w=0.
\]
The distinguished levels \(w=1\) and \(w=q\) map to finite horizontal levels
\begin{equation}\label{eq:wbar_levels}
	\bar w(1)=\frac12,\qquad
	\bar w(q)=\frac{q}{1+q}\in\left(\frac12,1\right).
\end{equation}
Thus the \((v,\bar w)\)-plane compactifies both the blow-up \(w\to\pm\infty\) and the
dynamically important lines \(w=1\), \(w=q\) into a bounded strip.

\subsection{General turning amplitude for \(q>1\)}

\begin{lemma}[Turning amplitude for general \(q>1\)]\label{lem:vturn_general}
	Assume \(q>1\), \(b^2>0\), and \(a\neq0\). Then the invariant constant equals
	\begin{equation}\label{eq:Cstar}
		\boxed{
			C_*=\frac{\bigl((q-1)a^2+b^2\bigr)^q}{b^2},
		}
	\end{equation}
	and any turning point satisfies
	\begin{equation}\label{eq:vturn_general}
		\boxed{
			|v_{\mathrm{turn}}|^2
			=\left(\frac{C_*}{q^q}\right)^{\!\frac{1}{(q-1)}}
			=\left(
			\frac{\bigl((q-1)a^2+b^2\bigr)^q}{b^2\,q^q}
			\right)^{\!\frac{1}{(q-1)}}.
		}
	\end{equation}
\end{lemma}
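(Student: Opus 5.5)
We evaluate the first integral of Lemma~\ref{lem:FI} at the initial point, and then restrict it to the turning set $w=0$.

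\emph{Step 1: the constant $C_*$.} Take $a\neq0$ and $b^2>0$. By \eqref{eq:w0}, $w_0=1-b^2/a^2$, so
\[
w_0-1=-\frac{b^2}{a^2},\qquad w_0-q=-\frac{(q-1)a^2+b^2}{a^2}.
\]
Since $q>1$, both quantities are strictly negative. Hence $|w_0-1|=b^2/a^2$ and $|w_0-q|=\bigl((q-1)a^2+b^2\bigr)/a^2$, and both are nonzero, so the expression in \eqref{eq:FI} is well defined and positive at $t=0$. Substituting $v=a$ gives
\[
C_*=|a|^{2(q-1)}\,\frac{\bigl((q-1)a^2+b^2\bigr)^q\,a^{-2q}}{b^2\,a^{-2}}.
\]
The powers of $|a|$ sum to $2(q-1)-2q+2=0$, which yields \eqref{eq:Cstar}.

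\emph{Step 2: the turning amplitude.} A turning point has $v_t=0$, i.e.\ $w=0$. At such a point $|w-q|^q/|w-1|=q^q$. Provided the first integral still holds there, we get $|v_{\rm turn}|^{2(q-1)}q^q=C_*$. Taking the $(q-1)$-th root, which is legitimate since $q>1$, gives \eqref{eq:vturn_general}. Note that $v_{\rm turn}\neq0$ automatically, because $C_*>0$.

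\emph{Step 3: justifying that the first integral persists up to the turning point.} This is the main obstacle. Lemma~\ref{lem:FI} is stated only on monotone intervals with $v\neq0$, while a turning point is exactly where monotonicity in $t$ ends. I will argue as follows.

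First, \eqref{eq:FI} can be rewritten as a conserved quantity in $t$ directly. Set $\Phi(t)=|v|^{2(q-1)}|w-q|^q|w-1|^{-1}$. Using \eqref{eq:ode} and \eqref{eq:wdef}, one checks that $\frac{d}{dt}\log\Phi=0$ wherever $v\neq0$, $w\neq1$ and $w\neq q$. The computation should be essentially $w_t=\frac{v}{2(1+q)}\cdot\bigl(-2(w-1)(w-q)\bigr)$, so $t$-monotonicity of $v$ is never used.

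Next, $\Phi$ is continuous as long as $v\neq0$ and $w$ stays away from $1$ and $q$. The lines $w=1$ and $w=q$ are invariant for the $w$-equation, since they are equilibria of the Riccati-type law above. Because $w_0<1<q$, uniqueness keeps $w<1$ for all time, and so $|w-1|$ never vanishes.

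Finally, $v$ cannot reach $0$ before the turning time. Conservation of $\Phi$ with $w<1$ bounded forces $|v|^{2(q-1)}\ge C_*|w-1|/|w-q|^q$. This lower bound stays positive while $w$ remains finite, and $w$ is finite up to $w=0$. Hence $\Phi\equiv C_*$ holds up to and including the turning point, and Step 2 applies.

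If any turning points occur later, the same argument applies on each subsequent interval, which gives the claim for ``any turning point.''
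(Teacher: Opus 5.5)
Your Steps 1 and 2 are exactly the paper's proof. The paper evaluates \eqref{eq:FI} at $t=0$ and then sets $w=0$, taking for granted that the same constant persists to every turning point. Your Step 3 tries to justify that persistence, but it contains a false claim.

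\textbf{Where Step 3 fails.} Take $a>0$ and $b^2>a^2$. Then $w_0<0$, and while $v>0$ one has $w_t=-\frac{v}{1+q}(w-1)(w-q)<0$. So there is no turning point on the branch $v>0$. Instead $v$ decreases to $0$ in finite time (there $2(1+q)v_t\le a^2-b^2<0$), and $w\to-\infty$. The only turning point lies on the subsequent branch $v<0$. The same happens for the second turning point $t_9$ in Subcase (A1). In the explicit $q=2$ solution, $w(t)=1-36b^2/(At-6a)^2\to-\infty$ at $t_6=6a/A$, where $v=0$.

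So the claim ``$w$ is finite up to $w=0$'' fails in general. Applying ``the same argument on each subsequent interval'' only gives $\Phi\equiv C'$ on the new interval. Nothing you have established forces $C'=C_*$: you only know $\Phi$ is continuous where $v\neq0$ and $w$ is finite, and both conditions fail at the crossing.

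\textbf{The missing idea: eliminate $w$.} Set $D:=2(1+q)v_t-v^2=v^2(w-1)$. Then $v^2(w-q)=D-(q-1)v^2$, so
\[
\Phi=\frac{\bigl((q-1)v^2-D\bigr)^q}{-D}.
\]
The ODE \eqref{eq:ode} gives the linear law $D_t=\frac{q}{1+q}\,vD$. Hence $D(t)=-b^2\exp\bigl(\frac{q}{1+q}\int_0^t v\,ds\bigr)<0$ on the whole existence interval; this also replaces your invariance argument for $w=1$.

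Thus $\Phi$ is a smooth function of $(v,v_t)$ along the entire trajectory, including the zeros of $v$. A direct computation gives $\frac{d}{dt}\log\Phi=0$ everywhere, so $\Phi\equiv C_*$. At any turning point $v_t=0$, i.e.\ $D=-v^2$, so $\Phi=(qv^2)^q/v^2=q^q|v|^{2(q-1)}$. This yields \eqref{eq:vturn_general} for every turning point, with no monotonicity or $v\neq0$ restriction.

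Equivalently, in the apex variables $-D=\beta u^2$ with $\beta=\frac{2q+m}{m(m+1)}$. Then $\Phi=\bigl((q-1)v^2+\beta u^2\bigr)^q/(\beta u^2)$ is a manifestly global invariant, since $u$ never vanishes when $u_0\neq0$.
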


\begin{proof}
	From \eqref{eq:FI} at \(t=0\),
	\(C_*=|a|^{2(q-1)}\frac{|w_0-q|^q}{|w_0-1|}\).
	With \(w_0=1-b^2/a^2\), we have \(|w_0-1|=b^2/a^2\) and
	\(|w_0-q|=q-1+b^2/a^2\) for \(q>1\). This gives \eqref{eq:Cstar}.
	At a turning point \(w=0\), \eqref{eq:FI} gives
	\(|v|^{2(q-1)}\cdot q^q=C_*\), yielding \eqref{eq:vturn_general}.
\end{proof}

\subsection{A general formula for the first turning time \(t_3\) and its small-\(b\) asymptotics}

In this section we assume
\begin{equation}\label{eq:assume_t3}
	q>1,\qquad a>0,\qquad 0<b^2<a^2,
\end{equation}
so that \(v_t(0)=\frac{1}{2(1+q)}(a^2-b^2)>0\) and \(w_0=1-b^2/a^2\in(0,1)\).
We define \(t_3=t_3(q,a,b)\) to be the \emph{first} time the trajectory reaches the
turning locus \(w=0\) on the \(v>0\) branch, i.e.
\[
(v,\bar w)=\bigl(|v_{\mathrm{turn}}|,0^+\bigr).
\]

\subsection{A closed quadrature in the \(w\)-variable}

For any \(q>1\), combining \(dt/dv=\frac{2(1+q)}{w v^2}\) with \eqref{eq:reduced} gives
\begin{equation}\label{eq:dt_dw_general}
	\frac{dt}{dw}
	=\frac{dt}{dv}\frac{dv}{dw}
	= -\,\frac{1+q}{(w-1)(w-q)}\cdot \frac{1}{v(w)}.
\end{equation}
On the \(v>0\), \(w\in(0,1)\) branch, the invariant \eqref{eq:FI} reads
\begin{equation}\label{eq:v_of_w_general}
	v(w)^{2(q-1)}
	= C_*\,\frac{1-w}{(q-w)^q},
	\qquad (0<w<1),
\end{equation}
hence
\begin{equation}\label{eq:v_of_w_general_root}
	v(w)
	=\left(C_*\right)^{\! \frac{1}{2(q-1)}}\left(\frac{1-w}{(q-w)^q}\right)^{\! \frac{1}{2(q-1)}}.
\end{equation}
Substituting \eqref{eq:v_of_w_general_root} into \eqref{eq:dt_dw_general} and using
\((w-1)(w-q)=(1-w)(q-w)\) for \(w\in(0,1)\) yields:

\begin{lemma}[Explicit \(dt/dw\) for \(0<w<1\)]\label{lem:dt_dw_simplified}
	Under \eqref{eq:assume_t3}, for \(0<w<1\),
	\begin{equation}\label{eq:dt_dw_simplified}
		\frac{dt}{dw}
		=-(q+1)\,C_*^{-\frac{1}{2(q-1)}}
		\,(1-w)^{-\frac{2q-1}{2(q-1)}}
		\,(q-w)^{\frac{2-q}{2(q-1)}}.
	\end{equation}
\end{lemma}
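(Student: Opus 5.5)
The plan is direct substitution: insert the explicit branch $v(w)$ from \eqref{eq:v_of_w_general_root} into the chain-rule identity \eqref{eq:dt_dw_general}, then collect the powers of $(1-w)$ and $(q-w)$. Before doing so I would confirm that \eqref{eq:dt_dw_general} is valid on this branch. Under \eqref{eq:assume_t3} we have $w_0\in(0,1)$ and $v(0)=a>0$. Since $v_t=\frac{w}{2(1+q)}v^2>0$ while $w>0$, the function $v$ is strictly increasing there. So $t\mapsto v$ is invertible, $w$ may be regarded as a function of $v$ by Lemma~\ref{lem:FI}, and $dt/dv=2(1+q)/(wv^2)$ holds.

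By \eqref{eq:reduced}, $dw/dv=-2(w-1)(w-q)/(vw)$, which is nonzero for $0<w<1$. Hence $w$ is strictly monotone along the branch and $dv/dw$ exists. Multiplying the two factors gives
\[
\frac{dt}{dw}=\frac{2(1+q)}{wv^2}\cdot\frac{-vw}{2(w-1)(w-q)}=-\frac{(1+q)}{(w-1)(w-q)}\frac1v,
\]
which recovers \eqref{eq:dt_dw_general}. On $0<w<1$ the two factors $w-1$ and $w-q$ are both negative, so $(w-1)(w-q)=(1-w)(q-w)>0$.

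Next I take absolute values out of the first integral \eqref{eq:FI}. For $0<w<1<q$ we have $|w-q|=q-w$ and $|w-1|=1-w$, which gives \eqref{eq:v_of_w_general}. Because $v>0$, taking the positive $2(q-1)$-th root is legitimate and yields \eqref{eq:v_of_w_general_root}.

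Finally, substituting $1/v=C_*^{-1/(2(q-1))}(1-w)^{-1/(2(q-1))}(q-w)^{q/(2(q-1))}$ and collecting exponents gives
\[
(1-w)^{-1-\frac{1}{2(q-1)}}=(1-w)^{-\frac{2q-1}{2(q-1)}},\qquad (q-w)^{-1+\frac{q}{2(q-1)}}=(q-w)^{\frac{2-q}{2(q-1)}}.
\]
The prefactor is $-(q+1)C_*^{-1/(2(q-1))}$, which is exactly \eqref{eq:dt_dw_simplified}.

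The only point needing care, rather than a real obstacle, is justifying the change of variable $t\leftrightarrow w$: we need that $w$ stays in $(0,1)$ and $v>0$ on the interval considered, which is built into the definition of the branch leading to $t_3$. A secondary check is the sign bookkeeping in the absolute values of \eqref{eq:FI}. The negative sign in the result is consistent with $w$ decreasing from $w_0$ toward the turning locus $w=0$ as $t$ increases. This follows from $dw/dv<0$ together with $v$ increasing.
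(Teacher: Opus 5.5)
Your proposal is correct and follows the paper's proof: substitute the branch formula \eqref{eq:v_of_w_general_root} into \eqref{eq:dt_dw_general}, then combine the exponents of $(1-w)$ and $(q-w)$. Your extra checks are sound and make explicit what the paper leaves implicit. These are the monotonicity of $v$ and $w$ on the branch, the nonvanishing of $dw/dv$, and resolving the absolute values in \eqref{eq:FI} for $0<w<1<q$.
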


\begin{proof}
	Insert \eqref{eq:v_of_w_general_root} into \eqref{eq:dt_dw_general} and simplify powers.
\end{proof}

\begin{theorem}[Quadrature for \(t_3(q,a,b)\)]\label{thm:t3_quadrature_general}
	Assume \eqref{eq:assume_t3} and set \(w_0=1-b^2/a^2\). Then
	\begin{equation}\label{eq:t3_quadrature_general}
		t_3(q,a,b)
		=\int_{w_0}^{0}\frac{dt}{dw}\,dw
		=(q+1)\,C_*^{-\frac{1}{2(q-1)}}
		\int_{0}^{w_0}
		(1-w)^{-\frac{2q-1}{2(q-1)}}
		(q-w)^{\frac{2-q}{2(q-1)}}\,dw,
	\end{equation}
	where \(C_*=\dfrac{((q-1)a^2+b^2)^q}{b^2}\).
\end{theorem}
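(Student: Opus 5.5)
We argue directly in the time variable rather than through $w(v)$. This sidesteps the ``monotone interval'' caveat in Lemma~\ref{lem:FI}. We then change variables $t\mapsto w$ on the interval $[0,t_3]$.

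\textbf{Step 1: an ODE for $w(t)$.} As long as $v(t)\neq0$, differentiate $w=2(1+q)v_t/v^2$ and substitute $v_{tt}=vv_t-\alpha v^3$ from \eqref{eq:ode}, using $v_t=\frac{w}{2(1+q)}v^2$ and $\alpha=\frac{q}{2(1+q)^2}$. A short computation, consistent with \eqref{eq:reduced} multiplied by $dv/dt$, gives
\[
\frac{dw}{dt}=-\frac{(w-1)(w-q)}{1+q}\,v .
\]
At $t=0$ we have $v=a>0$ and $w=w_0\in(0,1)$ by \eqref{eq:assume_t3} and \eqref{eq:w0}.

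\textbf{Step 2: monotonicity and finite arrival at $w=0$.} Let $[0,t^*)$ be the maximal interval on which the solution exists and satisfies $0<w<1$. On this interval $v_t>0$, so $v\ge a>0$ and the division by $v$ is legitimate. Moreover $(w-1)(w-q)=(1-w)(q-w)>0$, so $w$ is strictly decreasing. Hence $w\in(0,w_0]$ there. The first integral \eqref{eq:FI} then holds in the branch form \eqref{eq:v_of_w_general}. Since $(1-w)/(q-w)^q$ is bounded on $[0,w_0]$, $v$ stays bounded, and therefore so does $v_t$; the solution cannot blow up before $w$ leaves $(0,1)$. Step 1 also gives the bound
\[
\frac{dw}{dt}\le-\frac{(1-w_0)(q-w_0)\,a}{1+q}<0 .
\]
Consequently $w$ reaches $0$ at a finite time, which by definition is $t_3$ (there $v_t=0$ for the first time). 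This is also the value $|v_{\rm turn}|$ of Lemma~\ref{lem:vturn_general}.

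\textbf{Step 3: the quadrature.} Since $w:[0,t_3]\to[0,w_0]$ is a $C^1$ strictly decreasing bijection with nonvanishing derivative on $[0,t_3)$, its inverse satisfies $dt/dw=1/(dw/dt)$. Substituting $v=v(w)$ from \eqref{eq:v_of_w_general_root} reproduces \eqref{eq:dt_dw_general}, and hence the closed form of Lemma~\ref{lem:dt_dw_simplified}. Then
\[
t_3=\int_{w_0}^{0}\frac{dt}{dw}\,dw .
\]
Flipping the limits absorbs the minus sign and yields \eqref{eq:t3_quadrature_general}, with $C_*$ taken from Lemma~\ref{lem:vturn_general}.

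The integrand is continuous on $[0,w_0]$ because $w_0<1<q$, so convergence is immediate. The only delicate point, which I regard as the main obstacle, is Step 2: one must guarantee that the trajectory actually reaches $w=0$ before any blow-up or sign change of $v$. The uniform negative bound on $dw/dt$ together with the a priori bound on $v$ from the first integral handles this.
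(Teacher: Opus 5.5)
Your proposal is correct and follows essentially the same route as the paper, which obtains \eqref{eq:dt_dw_general} from the chain rule $dt/dw=(dt/dv)(dv/dw)$ and \eqref{eq:reduced}, substitutes the branch form \eqref{eq:v_of_w_general_root} of the first integral, and integrates from $w_0$ to $0$; this is exactly your Step~3, and your $dw/dt$ is just the reciprocal of that formula. Your Step~2 adds something the paper omits: it proves that the trajectory actually reaches $w=0$ in finite time without blow-up, using monotonicity of $w$, $v\ge a$, boundedness of $v$ via the first integral, and the uniform negative bound on $dw/dt$, whereas the paper takes this for granted when it defines $t_3$.
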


\subsection{Small-\(b\) limit: \(t_3\to 2(q+1)/a\)}

\begin{theorem}[Universal small-\(b\) asymptotic for \(t_3\)]\label{thm:t3_limit_general}
	Fix \(q>1\) and \(a>0\). For \(b^2\in(0,a^2)\), let \(t_3(q,a,b)\) be defined by
	\eqref{eq:t3_quadrature_general}. Then \(t_3(q,a,b)\) remains finite as \(b^2\to0^+\),
	and in fact
	\begin{equation}\label{eq:t3_limit_general}
		\lim_{b\to0^+} t_3(q,a,b)=\frac{2(q+1)}{a}.
	\end{equation}
\end{theorem}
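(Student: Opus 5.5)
The plan is to evaluate the limit directly from the quadrature of Theorem~\ref{thm:t3_quadrature_general}. Finiteness of $t_3$ for each fixed $b^2\in(0,a^2)$ is immediate, because the integrand is continuous on the compact interval $[0,w_0]$ with $w_0<1$. The only task is to show that the divergence of the integral as $w_0\to1$ is cancelled exactly by the vanishing of the prefactor $C_*^{-1/(2(q-1))}$.

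I will use the abbreviations
\[
\gamma:=\tfrac{1}{2(q-1)},\qquad \beta:=\tfrac{2-q}{2(q-1)},\qquad \varepsilon:=1-w_0=\tfrac{b^2}{a^2}.
\]
In this notation $(2q-1)/(2(q-1))=1+\gamma$, and Lemma~\ref{lem:vturn_general} gives
\[
C_*^{-\gamma}=b^{2\gamma}\bigl((q-1)a^2+b^2\bigr)^{-q\gamma}.
\]

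\textbf{Step 1 (substitution).} I substitute $1-w=\varepsilon\sigma$. The range $w\in[0,w_0]$ becomes $\sigma\in[1,1/\varepsilon]$. The quadrature becomes
\[
t_3=(q+1)\,C_*^{-\gamma}\,\varepsilon^{-\gamma}\int_1^{1/\varepsilon}\sigma^{-1-\gamma}\,(q-1+\varepsilon\sigma)^{\beta}\,d\sigma .
\]
Since $\varepsilon^{-\gamma}=a^{2\gamma}b^{-2\gamma}$, the factor $b^{2\gamma}$ cancels. As $b\to0^+$ the prefactor therefore converges to
\[
(q+1)\,a^{2\gamma}\bigl((q-1)a^2\bigr)^{-q\gamma}.
\]

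\textbf{Step 2 (dominated convergence).} On the integration range we have $q-1\le q-1+\varepsilon\sigma\le q$. Hence $(q-1+\varepsilon\sigma)^{\beta}$ is bounded above and below by positive constants independent of $\varepsilon$, whatever the sign of $\beta$. The integrand, extended by zero beyond $1/\varepsilon$, is therefore dominated by $C\sigma^{-1-\gamma}$, which is integrable on $[1,\infty)$ because $\gamma>0$. Pointwise the integrand converges to $(q-1)^{\beta}\sigma^{-1-\gamma}$. Dominated convergence then gives
\[
\int_1^{1/\varepsilon}\cdots\,d\sigma\;\longrightarrow\;(q-1)^{\beta}\gamma^{-1}=2(q-1)^{1+\beta}.
\]

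\textbf{Step 3 (bookkeeping).} Multiplying the two limits gives
\[
\lim_{b\to0^+}t_3=2(q+1)\,a^{2\gamma-2q\gamma}\,(q-1)^{1+\beta-q\gamma}.
\]
The exponent of $a$ is $2\gamma(1-q)=-1$. The exponent of $q-1$ is
\[
\frac{2(q-1)+(2-q)-q}{2(q-1)}=0.
\]
This yields $2(q+1)/a$, which is \eqref{eq:t3_limit_general}.

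The only delicate point is that the prefactor vanishes while the integral diverges as $b\to0$. A direct expansion of $(q-w)^{\beta}$ around $w=1$ would produce error terms whose integrability depends on whether $q>3/2$. The rescaling in Step 1 is chosen to avoid that case split, so the main thing to verify carefully is the uniform domination in Step 2.
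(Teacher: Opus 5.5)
Your proof is correct, and it takes a slightly different route from the paper's.

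\textbf{The paper's route.} The paper stays in the variable $w$. It replaces $(q-w)^{(2-q)/(2(q-1))}$ by its value $(q-1)^{(2-q)/(2(q-1))}$ at $w=1$ and integrates $(1-w)^{-(2q-1)/(2(q-1))}$ in closed form, which gives the divergent leading term $2(q-1)(a^2/b^2)^{1/(2(q-1))}$. It then asserts that the discarded part is $O(1)$, so that it is killed by the prefactor $C_*^{-1/(2(q-1))}=O(b^{1/(q-1)})$.

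\textbf{Your route.} Your substitution $1-w=(b^2/a^2)\sigma$ turns the quadrature into an exact identity in which the factor $b^{1/(q-1)}$ cancels before any limit is taken. Dominated convergence on $[1,\infty)$ then finishes the argument, with the same exponent bookkeeping.

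\textbf{The case $1<q\le 3/2$.} As you anticipated, the paper's ``$O(1)$'' is literally true only for $q>3/2$. The discarded integrand is of exact order $(1-w)^{-1/(2(q-1))}$ when $q\neq2$. Hence the remainder grows like $\log(a^2/b^2)$ at $q=3/2$, and like $(a^2/b^2)^{1/(2(q-1))-1}$ for $1<q<3/2$. After multiplication by the prefactor it still tends to zero, like $b^2\log(1/b)$ and $b^2$ respectively. So the paper's conclusion survives, but its stated estimate needs that refinement.

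\textbf{What each approach buys.} Your argument covers every $q>1$ uniformly, with no case distinction. The paper's route gives in exchange an explicit leading term and a quantitative error rate $O(b^{1/(q-1)})$ for $q>3/2$. That range includes the values $q=2,3$ actually used in the paper. Dominated convergence, by contrast, is purely qualitative.
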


\begin{proof}
	Write \(w_0=1-b^2/a^2\) and \(C_*=\frac{((q-1)a^2+b^2)^q}{b^2}\). From
	\eqref{eq:t3_quadrature_general},
	\[
	t_3(q,a,b)
	=(q+1)\,C_*^{-\frac{1}{2(q-1)}}
	\int_{0}^{1-b^2/a^2}
	(1-w)^{-p}\,(q-w)^{\gamma}\,dw,
	\]
	where
	\[
	p=\frac{2q-1}{2(q-1)}>1,
	\qquad
	\gamma=\frac{2-q}{2(q-1)}.
	\]
	The only possible divergence as \(b^2\to0^+\) comes from the endpoint \(w\uparrow 1\).
	Near \(w=1\), \((q-w)^{\gamma}\to(q-1)^{\gamma}\). Thus,
	\[
	\int_{0}^{1-b^2/a^2}
	(1-w)^{-p}\,(q-w)^{\gamma}\,dw
	=(q-1)^{\gamma}\int_{0}^{1-b^2/a^2}(1-w)^{-p}\,dw+O(1).
	\]
	Since
	\[
	\int_0^{w_0}(1-w)^{-p}\,dw=\frac{(1-w_0)^{1-p}-1}{p-1}
	\]
	and \(p-1=\frac{1}{2(q-1)}\), we obtain
	\[
	\int_{0}^{1-b^2/a^2}(1-w)^{-p}\,dw
	=2(q-1)\,(1-w_0)^{-\frac{1}{2(q-1)}}+O(1)
	=2(q-1)\left(\frac{a^2}{b^2}\right)^{\frac{1}{2(q-1)}}+O(1).
	\]
	Meanwhile,
	\[
	C_*^{-\frac{1}{2(q-1)}}
	=\left(\frac{b^2}{((q-1)a^2+b^2)^q}\right)^{\frac{1}{2(q-1)}}
	=(b^2)^{\frac{1}{2(q-1)}}\,((q-1)a^2+b^2)^{-\frac{q}{2(q-1)}}.
	\]
	Combining the leading terms yields cancellation of \((b^2)^{\pm \frac{1}{2(q-1)}}\):
	\begin{align*}
		t_3(q,a,b)
		&=(q+1)\left[
		(b^2)^{\frac{1}{2(q-1)}}\,((q-1)a^2+b^2)^{-\frac{q}{2(q-1)}}
		\right]
		\left[
		(q-1)^{\gamma}\,2(q-1)\left(\frac{a^2}{b^2}\right)^{\frac{1}{2(q-1)}}
		\right]
		+o(1)\\
		&=2(q+1)\,(q-1)^{\gamma+1}\,a^{\frac{1}{q-1}}\,
		((q-1)a^2+b^2)^{-\frac{q}{2(q-1)}}+o(1).
	\end{align*}
	Letting \(b^2\to0^+\) gives
	\[
	t_3(q,a,b)\to
	2(q+1)\,(q-1)^{\gamma+1}\,a^{\frac{1}{q-1}}\,
	\bigl((q-1)a^2\bigr)^{-\frac{q}{2(q-1)}}.
	\]
	Now compute the exponents:
	\[
	\gamma+1=\frac{2-q}{2(q-1)}+1=\frac{q}{2(q-1)},
	\]
	so \((q-1)^{\gamma+1}\) cancels \((q-1)^{-\frac{q}{2(q-1)}}\), and
	\[
	a^{\frac{1}{q-1}}\cdot (a^2)^{-\frac{q}{2(q-1)}}=a^{\frac{1}{q-1}-\frac{q}{q-1}}=a^{-1}.
	\]
	Therefore \(t_3(q,a,b)\to 2(q+1)/a\), proving \eqref{eq:t3_limit_general}.
\end{proof}

\begin{remark}[Checks at \(q=2\) and \(q=3\)]
	For \(q=2\), the explicit formula \(t_3=\frac{6(a-|b|)}{a^2+b^2}\) yields
	\(t_3\to 6/a=2(q+1)/a\) as \(b^2\to0^+\).
	For \(q=3\), Theorem~\ref{thm:t3_limit_general} yields \(t_3\to 8/a\).
\end{remark}

\subsection{The integrable benchmark \(q=2\): explicit verification and the clock picture}

For \(q=2\), \(\alpha=1/9\) and \eqref{eq:ode} becomes
\[
v_{tt}=v v_t-\frac19 v^3,
\qquad
v_t(0)=\frac16(a^2-b^2).
\]
Set \(A=a^2+b^2\). The exact solution is
\begin{equation}\label{eq:q2_v}
	v(t)=-\frac{6\bigl(b^2t+a(at-6)\bigr)}{(at-6)^2+b^2t^2}
	=-\frac{6(At-6a)}{A t^2-12at+36}.
\end{equation}
The phase variable and compactification are
\begin{equation}\label{eq:q2_w_wbar}
	w(t)=1-\frac{36b^2}{(At-6a)^2},
	\qquad
	\bar w(t)=\frac{w(t)}{1+|w(t)|}.
\end{equation}
The turning amplitude is \(|v_{\mathrm{turn}}|=\dfrac{A}{2| b|}\).

\subsection*{Subcase (A1): \(a>0\), \(a^2>b^2\) --- 3--6--9--12 clockwise}
Assume \(a>0\) and \(a^2>b^2\) (so \(a>|b|\)). Define
\[
t_3=\frac{6(a-|b|)}{A},\qquad
t_6=\frac{6a}{A},\qquad
t_9=\frac{6(a+|b|)}{A}.
\]
Then \((v(t),\bar w(t))\) hits
\[
\bigl(|v_{\mathrm{turn}}|,0^+\bigr)\ \text{at }t=t_3,\qquad
(0^+,-1)\ \text{at }t=t_6,\qquad
\bigl(-|v_{\mathrm{turn}}|,0^-\bigr)\ \text{at }t=t_9,
\]
with the timeline \(0<t_3<t_6<t_9<\infty\).
As \(t\to\infty\), \((v(t),\bar w(t))\to(0^-,1/2)\), the ``12 o'clock'' mark,
and the last leg (9 to 12) takes infinite time.

\subsection*{Subcase (B1): \(a<0\), \(a^2>b^2\) --- a one-sided arc}
Assume \(a<0\) and \(a^2>b^2\). Then \(v(t)<0\) for all \(t\ge0\),
\(v(t)\uparrow 0^-\) as \(t\to\infty\), and \(\bar w(t)\uparrow 1/2\).
In the clock picture this corresponds to a single clockwise arc from about
``10 o'clock'' toward ``12 o'clock''.

\subsection{Proof of Theorem~\ref{thm:CLM-q}}
\label{sec:proof-thm-clm-q}

We give a self-contained proof of the finite-time blow-up characterization in Theorem~\ref{thm:CLM-q}, using the phase-portrait machinery developed above.

\subsection{Pointwise reduction in \texorpdfstring{$x$}{x} and the two cases \texorpdfstring{$b(x)=0$}{b(x)=0} vs.\ \texorpdfstring{$b(x)\ne0$}{b(x) nonzero}}

Fix $x$ and abbreviate
\[
a:=a(x)=V(0,x,\theta_*),\qquad b:=b(x)=U(0,x,\theta_*),
\]
where $\theta_*$ denotes one of the symmetry-axis angles. Along the corresponding ridge, the $(U,V)$-subsystem
reduces to the CLM-$q$ ODE (equivalently \eqref{eq:ode}--\eqref{eq:ic} after eliminating $u$),
so the question of blow-up is \emph{pointwise in $x$}.

\medskip
\noindent\textbf{Case 1: $b=0$.}
When $b=0$, the ridge equation forces $U(t,x,\theta_*)\equiv 0$ by uniqueness, and the
$v$-equation reduces to the Riccati ODE
\begin{equation}\label{eq:riccati_b0}
	v_t=\frac{1}{2(q+1)}\,v^2,\qquad v(0)=a.
\end{equation}
Hence
\begin{equation}\label{eq:riccati_solution}
	v(t)=\frac{a}{1-\frac{a}{2(q+1)}t}.
\end{equation}
If $a>0$, then $v(t)\to+\infty$ at the finite time $T(x)=\frac{2(q+1)}{a}$; moreover the $U$-component stays identically zero along the ridge.
If $a\le 0$, then the denominator in \eqref{eq:riccati_solution} never vanishes for $t\ge 0$
and $v(t)$ remains bounded (indeed $v(t)\uparrow 0$ if $a<0$ and $v\equiv 0$ if $a=0$).
Thus, at a fixed $x$, finite-time blow-up occurs \emph{if and only if} $b(x)=0$ and $a(x)>0$.

\medskip
\noindent\textbf{Case 2: $b\ne0$.}
Assume $b^2>0$ and $q>1$. If $a=0$, then $v_t(0)<0$; for every sufficiently small $\tau>0$, one has $v(\tau)<0$ and $v(\tau)\ne0$, so the first-integral argument below applies after restarting the ODE at $t=\tau$. We may therefore assume $a\ne0$.
Then $w_0=1-b^2/a^2<1$ (see \eqref{eq:w0}), so the trajectory on the
$(v,w)$-plane starts in the strip $0<w<1$ when $a^2>b^2$, or in $w<0$ when $a^2<b^2$.
Lemma~\ref{lem:FI} provides the first integral \eqref{eq:FI}, and
Lemma~\ref{lem:vturn_general} shows that any turning point satisfies $|v|\le |v_{\mathrm{turn}}|<\infty$.
In particular, on the $v>0$ branch with $a>0$ and $a^2>b^2$, the solution reaches the
turning locus $w=0$ in finite time $t_3=t_3(q,a,b)$ (Theorem~\ref{thm:t3_quadrature_general}), at which point
$v(t_3)=v_{\mathrm{turn}}$ is finite. After $t_3$, the vector field in \eqref{eq:ode}
drives the orbit through the remaining ``clockwise'' quadrants in the compactified phase plane,
but the invariant \eqref{eq:FI} prevents $w$ from escaping to $+\infty$ while $v$ stays bounded by
$v_{\mathrm{turn}}$. Consequently, $v$ remains bounded for all $t\ge 0$, and by the algebraic relation
between $u$ and $(v,v_t)$ (obtained by solving the second equation of the CLM-$q$ system for $u^2$),
the $u$-component is bounded as well. Standard ODE continuation therefore yields a global classical solution.

\subsection{Earliest blow-up over \texorpdfstring{$x$}{x}}

Define the set
\[
Z:=\{x:\ b(x)=0\ \text{and}\ a(x)>0\}.
\]
By the pointwise analysis above, blow-up occurs at some $x$ if and only if $Z\neq\emptyset$.
For each $x\in Z$, the blow-up time is $T(x)=\frac{2(q+1)}{a(x)}$, hence the \emph{earliest}
blow-up time is obtained by maximizing $a$ over $Z$:
\[
T=\inf_{x\in Z}T(x)=\frac{2(q+1)}{\max_{x\in Z}a(x)}.
\]
Let $\bar x\in Z$ attain the maximum (as in Theorem~\ref{thm:CLM-q}). Then $V(t,\bar x)$ blows up
at $t=T$ and no other $x$ can blow up earlier. This completes the proof of
Theorem~\ref{thm:CLM-q}.

\section{A convective axis reduction: the system $(R0)$}\label{sec:R0}

In this section we study the convective axis reduction $(R0)$ \eqref{eq:ray-1D-system-general} associated with the symmetry-axis dynamics of the Euler-reduced system. Unlike the closed apex ODE \eqref{eq:ray-1D-system-general-x-0}, convection remains present away from the origin; nevertheless, all convective terms vanish at $x=0$, so the exact apex blow-up mechanism persists there. We impose the initial data
\begin{equation}\label{eq:R0-data}
u(0,x)=B x^2 e^{-B_1x^2},
\qquad
v(0,x)=A e^{-A_1x^2},
\qquad A,B,A_1,B_1>0.
\end{equation}

The key observation is that, for even classical solutions, the symmetry center $x=0$ is dynamically closed: all the explicitly $x$-weighted convective terms vanish there, and the trace at $x=0$ satisfies exactly the same positive ODE system analyzed in Section~\ref{sec:horizontal-dynamics}. Thus the apex blow-up mechanism survives even though the full axis-restricted dynamics is no longer convection free away from the origin. 

\begin{lemma}[Closed apex dynamics for the axis reduction]\label{lem:R0-center-ODE}
Let $(u,v)$ be a classical solution of the axis-restricted system \eqref{eq:ray-1D-system-general} with initial data \eqref{eq:R0-data}. Define the apex trace
\[
u_c(t):=u(t,0),\qquad v_c(t):=v(t,0).
\]
Then \eqref{eq:ray-1D-system-general-x-0-A} leads (with $q=m+1$)
\begin{equation}\label{eq:R0-center-system}
u_c'(t)=\tfrac{m+1}{2(m+2)}v_c(t)u_c(t),\qquad v_c'(t)=\tfrac{1}{2(m+2)}\bigl(v_c(t)^2-\tfrac{3m+2}{m(m+1)}u_c(t)^2\bigr)
\end{equation}
Moreover, the initial data satisfy
\begin{equation}\label{eq:R0-center-data}
u_c(0)=0,\qquad v_c(0)=A.
\end{equation}
Consequently,
\begin{equation}\label{eq:R0-center-explicit}
u_c(t)\equiv 0,\qquad v_c(t)=\tfrac{2A(m+2)}{2(m+2)-At}
\qquad\text{for }0\le t<\tfrac{2(m+2)}{A}.
\end{equation}
\end{lemma}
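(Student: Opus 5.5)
The plan is to evaluate the system \eqref{eq:ray-1D-system-general} at $x=0$, then specialize the coefficients and solve the resulting ODE.

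\textbf{Step 1: restrict to $x=0$.} For a classical solution, $u,v$ are $C^2$ in $x$ near the origin, so $u_x$, $v_x$, $v_{xx}$ and $v_{tx}$ stay bounded there. Every convective term in \eqref{eq:ray-1D-system-general} carries an explicit factor $x$ or $x^2$ (Remark~\ref{rem:convective-terms}). These are $xvu_x$, $\tfrac{1}{\mu+m}xv_{xt}$, $xvv_x$ and $x^2(v_x^2-vv_{xx})$, and all vanish at $x=0$.

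Setting $x=0$ therefore gives exactly \eqref{eq:ray-1D-system-general-x-0}, as in Theorem~\ref{thm:symmetry-axis-ridge-functions}(B). With the parameter choice \eqref{eq:lambda-mu} this becomes \eqref{eq:ray-1D-system-general-x-0-A}. Substituting $q=m+1$ gives:
\begin{itemize}
\item $\tfrac{q}{2(q+1)}=\tfrac{m+1}{2(m+2)}$;
\item $\tfrac{1}{2(q+1)}=\tfrac{1}{2(m+2)}$;
\item $\tfrac{2q+m}{m(m+1)}=\tfrac{3m+2}{m(m+1)}$.
\end{itemize}
This is \eqref{eq:R0-center-system}.

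I will also check that $q=m+1>m^2/2$ for $m=1,2$, so \eqref{eq:lambda-mu} is admissible. For $m=1$ we have $2>1/2$, and for $m=2$ we have $3>2$.

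\textbf{Step 2: initial data.} Evaluating \eqref{eq:R0-data} at $x=0$ gives $u_c(0)=B\cdot0=0$ and $v_c(0)=A$.

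\textbf{Step 3: solve.} The right-hand side of \eqref{eq:R0-center-system} is polynomial, hence locally Lipschitz, so solutions are unique. The $u_c$-equation is linear in $u_c$ with coefficient $\tfrac{m+1}{2(m+2)}v_c$. Its solution is $u_c(t)=u_c(0)\exp\bigl(\tfrac{m+1}{2(m+2)}\int_0^t v_c\bigr)$, which vanishes identically because $u_c(0)=0$. Equivalently, $(0,\tilde v)$ is a solution whenever $\tilde v$ solves the $u$-free equation, and uniqueness identifies it with $(u_c,v_c)$.

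With $u_c\equiv0$, the $v_c$-equation reduces to the Riccati equation $v_c'=\tfrac{1}{2(m+2)}v_c^2$, $v_c(0)=A$. This is exactly \eqref{eq:riccati_b0} with $q=m+1$. Separating variables gives $1/v_c=1/A-t/(2(m+2))$, which is \eqref{eq:R0-center-explicit} on $0\le t<2(m+2)/A$.

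\textbf{Main obstacle.} The only delicate point is Step 1. One must justify that the trace of the PDE at $x=0$ is legitimate, namely that $x\,v_{xt}$ and $x^2v_{xx}$ really vanish there and that the time derivative commutes with evaluation at $x=0$. I would handle this by assuming the classical solution is $C^1$ in $t$ and $C^2$ in $x$ up to $x=0$, with $v_{xt}$ continuous, on the whole existence interval. Evenness in $x$, inherited from evenness in $(r,z)$, also gives $u_x(t,0)=v_x(t,0)=0$, which makes the vanishing of the convective terms immediate.
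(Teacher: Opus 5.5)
Your proposal is correct and follows the paper's proof essentially step for step. You evaluate \eqref{eq:ray-1D-system-general} at $x=0$, where every $x$- or $x^2$-weighted convective term drops out, and specialize the coefficients via \eqref{eq:lambda-mu} with $q=m+1$. You then read off $u_c(0)=0$, $v_c(0)=A$ from \eqref{eq:R0-data}, obtain $u_c\equiv0$ from the linear $u_c$-equation, and solve the Riccati equation for $v_c$. Your extra regularity caution is harmless, and the commutation worry is in fact vacuous: $\partial_t$ at fixed $x=0$ is just the derivative of the trace, and $\tfrac{1}{\mu+m}xv_x$ vanishes identically there as long as $v_x(t,0)$ is finite.
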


\begin{proof}
At $x=0$, every convective term in \eqref{eq:ray-1D-system-general} vanishes because it contains at least one factor of $x$ or $x^2$. Hence the apex trace obeys \eqref{eq:R0-center-system}. Since
\[
u(0,x)=Bx^2e^{-B_1x^2},\qquad v(0,x)=Ae^{-A_1x^2},
\]
one has \eqref{eq:R0-center-data}. The first equation in \eqref{eq:R0-center-system} with $u_c(0)=0$ gives $u_c(t)\equiv 0$, and then the second equation reduces to the Riccati ODE
\[
v_c'(t)=\tfrac{1}{2(m+2)}v_c(t)^2,\qquad v_c(0)=A,
\]
whose solution is exactly \eqref{eq:R0-center-explicit}.
\end{proof}

\begin{theorem}[Finite-time apex blow-up for the convective axis reduction]\label{thm:R0-center-blowup}
Let $(u,v)$ solve the axis-restricted system \eqref{eq:ray-1D-system-general} with initial data \eqref{eq:R0-data}. Then the apex trace blows up in finite time at
\[
T=\frac{2(m+2)}{A}.
\]
More precisely,
\begin{equation}\label{eq:R0-center-blowup-profile}
u_c(t)\equiv 0,\qquad
v_c(t)=\frac{2(m+2)A}{2(m+2)-At}=\frac{2(m+2)}{T-t}
\qquad\text{for }0\le t<T.
\end{equation}
In particular,
\[
v_c(t)\to+\infty
\qquad\text{as }t\uparrow T.
\]
\end{theorem}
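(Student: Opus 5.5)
The plan is to deduce the theorem directly from Lemma~\ref{lem:R0-center-ODE}, together with a short check that the apex trace is well defined and that the blow-up is genuine rather than an artifact of an earlier breakdown elsewhere on the axis.

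First, I would apply Lemma~\ref{lem:R0-center-ODE}. On any interval $[0,T')$ on which $(u,v)$ is a classical solution of \eqref{eq:ray-1D-system-general}, all convective terms in \eqref{eq:ray-1D-system-general} vanish at $x=0$, because each carries a factor $x$ or $x^2$ and $u,v$ together with their $x$-derivatives up to second order are bounded near $x=0$. The trace $(u_c,v_c)$ therefore satisfies the closed system \eqref{eq:R0-center-system} with data \eqref{eq:R0-center-data}. I would check that evaluating the second equation at $x=0$ is legitimate: the term $\lambda\bigl(\tfrac{1}{\mu+m}xv_x\bigr)_t$ vanishes at $x=0$ provided $v_{xt}$ is continuous, which is part of the classical-solution hypothesis. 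Uniqueness for the Lipschitz ODE \eqref{eq:R0-center-system} then gives $u_c\equiv 0$. The remaining Riccati equation $v_c'=\tfrac{1}{2(m+2)}v_c^2$ with $v_c(0)=A$ has the explicit solution \eqref{eq:R0-center-explicit}. Rewriting with $T=2(m+2)/A$ gives
\[
\frac{2(m+2)A}{2(m+2)-At}=\frac{2(m+2)}{T-t},
\]
which is \eqref{eq:R0-center-blowup-profile}, and $v_c(t)\to+\infty$ as $t\uparrow T$ follows immediately.

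Second, I would address the meaning of "blows up at $T$". The argument shows the following: if the classical solution exists on $[0,T')$, then for every $t<\min(T',T)$ its trace is given by the formula above. Hence no classical solution can be continued past $T$, since $v(t,0)$ would be unbounded on a compact time interval. This gives $T_{\max}\le T$, with the apex profile valid on $[0,T_{\max})$.

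The main obstacle is showing $T_{\max}=T$, that is, that the solution does not break down earlier at some $x>0$. Following the statement's phrasing ("solve \ldots\ for $0\le t<T$"), I would read the hypothesis as assuming the classical solution exists on $[0,T)$, so the theorem becomes a statement about the trace, and I would flag the existence-up-to-$T$ issue explicitly. If an unconditional statement were wanted, I would first try a characteristics argument. The $u$-equation is transported along $\dot x=-xv/\lambda$, which fixes $x=0$. The $v$-equation is an implicit evolution for $v+\tfrac{1}{\mu+m}xv_x$, which can be inverted as an ODE in $x$ from the apex. Together these should give local-in-time control off the apex. Global control up to $T$ is exactly the off-apex issue the paper defers to Section~\ref{sec:R0-SS}.
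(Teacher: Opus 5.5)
Your proof is correct and is essentially the paper's argument. The paper simply invokes Lemma~\ref{lem:R0-center-ODE}: every convective term in \eqref{eq:ray-1D-system-general} carries a factor of $x$ or $x^2$, so the apex trace solves the closed system, $u_c\equiv0$, and the remaining Riccati equation gives $v_c=2(m+2)/(T-t)$. Your further observation is also accurate: the argument by itself only gives $T_{\max}\le T$ unless the classical solution is assumed to exist on $[0,T)$, which is a hypothesis the paper's proof uses without stating it.
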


\begin{proof}
By Lemma~\ref{lem:R0-center-ODE}, the apex trace satisfies
\[
u_c(t)\equiv 0,\qquad v_c(t)=\frac{2(m+2)A}{2(m+2)-At}.
\]
Therefore the blow-up time is exactly $T=\tfrac{2(m+2)}{A}$ and the profile is given by \eqref{eq:R0-center-blowup-profile}.
\end{proof}

\begin{remark}[Why the center mechanism is profile-free]
	The reduction at $x=0$ is exact and requires no special spatial profile. In particular, it proves finite-time breakdown for \emph{every} even classical solution with data \eqref{eq:R0-data}, without imposing a distinguished off-apex ansatz.
\end{remark}

\begin{remark}[Interpretation of Theorem~\ref{thm:R0-center-blowup}]
If Theorem~\ref{thm:R0-center-blowup} is taken as established, then the explicit axis reduction continues to govern the \emph{apex dynamics} at \(x=0\): the first-order ridge-flatness constraints are propagated at the apex, so the pointwise ODE system \eqref{eq:ray-1D-system-general-x-0} remains the correct leading-order mechanism for the blow-up there. What this theorem does \emph{not} give by itself is a closed-form description of the full background for general \(x>0\) and general \(\theta\). Accordingly, the explicit part of the present theory is the apex blow-up mechanism, while the extension away from the apex remains conditional and is delegated to the background/control problem for the full wedge.
\end{remark}

\begin{conjecture}[Existence of a symmetry-axis compatible background blow-up profile]
	\label{conj:sectorial-background-blowup}
	There exist a time $T>0$ and smooth functions
	\[
	V,U,G,P \in C^\infty\bigl([0,T)\times [0,\infty)\times[0,\tfrac{\pi}{2}]\bigr),
	\]
	with smooth initial data of the form
	\[
	a(x,\theta)=f_1(x^2,\phi(\theta)),\quad
	b(x,\theta)=f_2(x^2,\phi(\theta)),\quad
	c(x,\theta)=f_3(x^2,\phi(\theta)),
	\]
	for some smooth functions $f_1,f_2,f_3$ and some smooth angular profile $\phi(\theta)$ compatible with the even extensions across the symmetry axes $\theta=0,\frac{\pi}{2}$, such that the following hold.

	\smallskip
	\noindent
	{\rm(1) Background evolution on the first-quadrant wedge.}
	The quadruple $(V,U,G,P)$ solves the background system on
	\[
	[0,T)\times [0,\infty)\times \left[0,\tfrac{\pi}{2}\right],
	\]
	with initial conditions
	\[
	V(0,x,\theta)=a(x,\theta),\quad U(0,x,\theta)=b(x,\theta),\quad G(0,x,\theta)=c(x,\theta),
	\]
	and satisfies the compatibility, regularity, parity, and ridge-flatness conditions required by the rev5 formulation, in particular at
	\[
	x=0,\qquad \theta=0,\qquad \theta=\tfrac{\pi}{2}.
	\]

	\smallskip
	\noindent
	{\rm(2) Preservation of symmetry-axis flatness.}
	The solution remains even in $(r,z)$ for $0\le t<T$, so the corresponding ridge-flatness conditions are preserved automatically on the symmetry axes
	\[
	\theta=0,\qquad \theta=\tfrac{\pi}{2}.
	\]

	\smallskip
	\noindent
	{\rm(3) Apex blow-up dynamics.}
	Along the apex trace $x=0$, the solution reduces to the closed apex ODE dynamics identified in Section~\ref{sec:horizontal-dynamics}, and the corresponding blow-up law is preserved up to time $T$. In particular,
	\[
	|V(t,0,\theta_*)|\sim \frac{c_*}{T-t}
	\qquad\text{as }t\uparrow T,
	\]
	for some constant $c_*>0$ and for each symmetry-axis angle $\theta_*\in\{0,\tfrac{\pi}{2}\}$.

	\smallskip
	\noindent
	{\rm(4) Global background size bounds.}
	There exist constants $C_1,C_2,C_3>0$ such that for all $t\in[0,T)$,
	\[
	\|V(t)\|_{L^\infty_{x,\theta}}\le \frac{C_1}{T-t},
	\qquad
	\|U(t)\|_{L^\infty_{x,\theta}}\le \frac{C_2}{T-t},
	\qquad
	\|G(t)\|_{L^\infty_{x,\theta}}\le \frac{C_3}{T-t}.
	\]

	\smallskip
	\noindent
	{\rm(5) Stability-compatible derivative bounds.}
	For each finite derivative order required by the perturbative bootstrap, the corresponding adapted derivatives of $U,V,G,P$ obey bounds of the same critical scale as $t\uparrow T$, as summarized in \eqref{eq:bg-bounds-new}.

	\smallskip
	\noindent
	{\rm(6) Closure of the perturbative bootstrap.}
	For sufficiently small perturbations of this background in the norms used in the paper, the bootstrap assumptions can be closed up to time $T$, and the perturbed solution preserves the same apex singularity scenario.
\end{conjecture}


\section{Self-similar apex profiles for the convective axis reduction: the system $(R0)$}\label{sec:R0-SS}

The goal of this section is to formulate the self-similar construction in a way that is compatible with point blow-up at the apex only.  The distinction is important: a pure power law in the similarity variable is an exact punctured-axis profile in some parameter regimes, but it is already singular at $x=0$ for every $t<T$ and therefore is not a finite-time point blow-up from regular apex data.  A genuine apex-only self-similar construction must have a bounded profile at $\xi=0$ and sufficient decay as $\xi\to\infty$.

For convenience we copy the horizontal-axis convective reduction $(R0)$, consisting of \eqref{eq:ray-1D-system-general} together with \eqref{eq:ridge-dynamics-2}(1):
\begin{equation}\label{eq:ray-R0}
\left\{
\begin{aligned}
\lambda u_t&=xvu_x+\frac{m^2}{2}vu,\\[2mm]
\lambda \left(v+\frac{1}{\mu+m}xv_x\right)_t
&=\frac{\mu-m^2}{\mu+m}v^2-\frac{\mu}{\mu+m}u^2
+\frac{\mu+1-m}{\mu+m}xvv_x\\
&\qquad -\frac{1}{\mu+m}x^2\bigl(v_x^{\,2}-vv_{xx}\bigr),\\[2mm]
g&=mv+xv_x.
\end{aligned}
\right.
\end{equation}
Let
\begin{equation}\label{eq:R0-self-similar-ansatz}
\tau:=T-t,
\qquad
\xi:=\frac{x}{\tau^c},
\qquad c>0,
\end{equation}
and seek profiles of the form
\begin{equation}\label{eq:R0-self-similar-fields}
u(t,x)=\frac1\tau U(\xi),
\qquad
v(t,x)=\frac1\tau V(\xi),
\qquad
g(t,x)=\frac1\tau G(\xi).
\end{equation}
Substitution into \eqref{eq:ray-R0} gives the profile system
\begin{equation}\label{eq:R0-self-similar}
\left\{
\begin{aligned}
&\xi (U^2)_\xi (V-c\lambda)+U^2(m^2V-2\lambda)=0,\\[2mm]
&\xi^2V_{\xi\xi}(V-c\lambda)-(\xi V_\xi)^2
+\xi V_\xi\bigl(-c\lambda(\mu+m+1)-\lambda+(\mu-m+1)V\bigr)\\
&\qquad +(\mu-m^2)V^2-\lambda(\mu+m)V-\mu U^2=0,\\[2mm]
&G=mV+\xi V_\xi.
\end{aligned}
\right.
\end{equation}
Equivalently, with $y=\log\xi$, $u(y)=U(\xi)^2$, and $v(y)=V(\xi)$,
\begin{equation}\label{eq:R0-self-similar-2}
\left\{
\begin{aligned}
&u_y(v-c\lambda)+u(m^2v-2\lambda)=0,\\[2mm]
&(v-c\lambda)v_{yy}-v_y^2+v_y\bigl(-c\lambda(\mu+m)-\lambda+(\mu-m)v\bigr)\\
&\qquad +(\mu-m^2)v^2-\lambda(\mu+m)v-\mu u=0,\\[2mm]
&g=mv+v_y.
\end{aligned}
\right.
\end{equation}

\subsection{The apex value and the apex-only condition}

A regular self-similar profile at $\xi=0$ satisfies $\xi V_\xi\to0$ and $\xi(U^2)_\xi\to0$.  Taking $\xi\downarrow0$ in \eqref{eq:R0-self-similar}, the nontrivial blowing-up branch with $U(0)=0$ has
\begin{equation}\label{eq:R0-apex-profile-value}
V(0)=V_*:=\frac{\lambda(\mu+m)}{\mu-m^2},
\qquad
G(0)=mV_*.
\end{equation}
For the distinguished choice $q=m+1$ in \eqref{eq:lambda-mu}, this constant is
\begin{equation}\label{eq:R0-apex-profile-value-q}
V_*=2(m+2),
\qquad m=1,2.
\end{equation}
Thus the self-similar apex trace is exactly
\begin{equation}\label{eq:R0-SS-apex-trace}
u(t,0)=0,
\qquad
v(t,0)=\frac{V_*}{T-t},
\qquad
g(t,0)=\frac{mV_*}{T-t}.
\end{equation}
This agrees with the closed apex dynamics in Lemma~\ref{lem:R0-center-ODE}: if $v(0,0)=A$, then $T=V_*/A$.

\begin{proposition}[Criterion for apex-only self-similar blow-up]\label{prop:R0-SS-apex-only}
Assume that $(U,V)$ is a $C^2$ solution of \eqref{eq:R0-self-similar} on $[0,\infty)$, smooth at $\xi=0$, with
\begin{equation}\label{eq:R0-profile-origin-conditions}
U(0)=0,
\qquad
V(0)=V_*,
\qquad
\xi V_\xi(\xi)\to0
\quad\text{as }\xi\downarrow0,
\end{equation}
and that for some $h>0$ and $C>0$,
\begin{equation}\label{eq:R0-profile-tail}
|U(\xi)|+|V(\xi)|+|G(\xi)|\le C(1+\xi)^{-h},
\qquad
ch\ge1.
\end{equation}
Then the self-similar field \eqref{eq:R0-self-similar-fields} blows up at $x=0$ as $t\uparrow T$, and it remains bounded on every set $x\ge x_0>0$.  More precisely,
\begin{equation}\label{eq:R0-only-origin-blowup}
|v(t,0)|=\frac{V_*}{T-t}\to\infty,
\end{equation}
while for every fixed $x_0>0$,
\begin{equation}\label{eq:R0-off-apex-bound}
\sup_{0\le t<T}\sup_{x\ge x_0}|v(t,x)|
\le C x_0^{-h} T^{ch-1}<\infty.
\end{equation}
The same conclusion holds for $u$ and $g$ under the corresponding bounds in \eqref{eq:R0-profile-tail}.  Hence the blow-up set of the self-similar profile is contained in the apex $\{x=0\}$, and the trace \eqref{eq:R0-SS-apex-trace} shows that the apex is indeed singular.
\end{proposition}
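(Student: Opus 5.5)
The argument is a direct substitution into the self-similar ansatz \eqref{eq:R0-self-similar-fields}, using the apex conditions \eqref{eq:R0-profile-origin-conditions} for the singular part and the tail bound \eqref{eq:R0-profile-tail} for the off-apex part.

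\textbf{Apex blow-up.} At $x=0$ we have $\xi=0$ for every $t<T$. Hence
\[
v(t,0)=\frac{V(0)}{\tau}=\frac{V_*}{T-t}.
\]
Since $V_*=\lambda(\mu+m)/(\mu-m^2)>0$ (equal to $2(m+2)$ when $q=m+1$ by \eqref{eq:R0-apex-profile-value-q}), this gives \eqref{eq:R0-only-origin-blowup}. In the same way, $u(t,0)=0$ and $g(t,0)=G(0)/\tau$. The value $G(0)=mV_*$ follows from $G=mV+\xi V_\xi$ together with $\xi V_\xi\to0$. This reproduces the trace \eqref{eq:R0-SS-apex-trace} and shows the apex is genuinely singular.

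\textbf{Off-apex bound.} Fix $x_0>0$ and take $x\ge x_0$, $0\le t<T$, so that $\tau\in(0,T]$ and $\xi=x\tau^{-c}\ge x_0\tau^{-c}$. The tail bound gives $|V(\xi)|\le C(1+\xi)^{-h}\le C\xi^{-h}$. Therefore
\[
|v(t,x)|\le \tau^{-1}C\,x^{-h}\tau^{ch}\le C\,x_0^{-h}\,\tau^{ch-1}.
\]
The hypothesis $ch\ge1$ is used exactly here: the exponent $ch-1$ is nonnegative, so $\tau^{ch-1}\le T^{ch-1}$ for $\tau\le T$. Taking suprema over $x\ge x_0$ and $t\in[0,T)$ yields \eqref{eq:R0-off-apex-bound}. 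The same computation with $U$ and $G$ in place of $V$ bounds $u$ and $g$.

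\textbf{Blow-up set.} Every point $x_0>0$ has a neighbourhood $\{x\ge x_0/2\}$ on which the fields stay bounded up to time $T$. Hence the blow-up set is contained in $\{x=0\}$, and the apex trace shows it is exactly the apex.

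The only place requiring care is the use of $(1+\xi)^{-h}\le\xi^{-h}$ together with the sign of $ch-1$. If $ch<1$, the bound would degenerate as $\tau\to0$, which is why the criterion requires $ch\ge1$. Regularity of the profile at $\xi=0$ is needed only to identify the apex value $V(0)=V_*$; it plays no role in the off-apex estimate.
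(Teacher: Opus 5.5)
Your proposal is correct and follows essentially the same route as the paper. You read off the apex trace $v(t,0)=V_*/(T-t)$ from \eqref{eq:R0-profile-origin-conditions}, and you bound $|v(t,x)|\le C x^{-h}\tau^{ch-1}\le Cx_0^{-h}T^{ch-1}$ using the tail estimate together with $ch\ge1$ and $0<\tau\le T$, with identical estimates for $u$ and $g$.
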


\begin{proof}
At $x=0$, \eqref{eq:R0-profile-origin-conditions} gives \eqref{eq:R0-SS-apex-trace}, hence \eqref{eq:R0-only-origin-blowup}.  If $x\ge x_0>0$, then $\xi=x/\tau^c$ and \eqref{eq:R0-profile-tail} imply
\[
|v(t,x)|=\tau^{-1}|V(x/\tau^c)|
\le C\tau^{-1}\left(\frac{x}{\tau^c}\right)^{-h}
=Cx^{-h}\tau^{ch-1}
\le Cx_0^{-h}T^{ch-1},
\]
because $ch\ge1$ and $0<\tau\le T$.  The estimates for $u$ and $g$ are identical.
\end{proof}

\subsection{Construction of the self-similar solution}\label{sec:SS-solution}

\begin{theorem}[Existence of apex-only self-similar profiles for $(R0)$]\label{thm:R0-SS-existence}
Set $q=m+1$ in \eqref{eq:lambda-mu}.  For $m=1$ choose
\[
        c=c_1:=\frac{33}{14},
\]
and for $m=2$ choose
\[
        c=c_2:=\frac85.
\]
Then \eqref{eq:R0-self-similar} has a profile $(U,V,G)$ satisfying the hypotheses of Proposition~\ref{prop:R0-SS-apex-only}.  More precisely, one may take
\[
        U\equiv0,
\]
and there are constants $a_m,b_m>0$ such that
\begin{equation}\label{eq:R0-existence-apex-expansion}
        V(\xi)=V_*-a_m\xi^2+O(\xi^4),
        \qquad \xi\downarrow0,
\end{equation}
while
\begin{equation}\label{eq:R0-existence-tail-expansion}
        V(\xi)=b_m\xi^{-1/c_m}(1+o(1)),
        \qquad
        G(\xi)=\left(m-\frac1{c_m}\right)b_m\xi^{-1/c_m}(1+o(1)),
        \qquad \xi\to\infty.
\end{equation}
In particular \eqref{eq:R0-profile-tail} holds with
\[
        h=\frac1{c_m},
        \qquad c_mh=1.
\]
Consequently the corresponding self-similar field blows up at the apex $x=0$ and remains bounded on every set $x\ge x_0>0$.
\end{theorem}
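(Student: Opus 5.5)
The plan is to take $U\equiv0$, which satisfies the first equation of \eqref{eq:R0-self-similar} identically. That leaves a single autonomous second-order ODE for $v(y)=V(e^y)$ in the logarithmic variable of \eqref{eq:R0-self-similar-2}:
\[
(v-c\lambda)v_{yy}-v_y^2+v_y\bigl(-c\lambda(\mu+m)-\lambda+(\mu-m)v\bigr)+(\mu-m^2)v^2-\lambda(\mu+m)v=0 .
\]
I would insert the values $q=m+1$, so $(\lambda,\mu)=(3/2,5/3)$ for $m=1$ and $(\lambda,\mu)=(8/3,16)$ for $m=2$. The desired profile is then a heteroclinic orbit. It leaves the equilibrium $v=V_*$ as $y\to-\infty$, which encodes regularity at the apex $\xi=0$. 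It enters the equilibrium $v=0$ as $y\to+\infty$, which encodes the tail. Once this orbit is found, $G=mV+\xi V_\xi$ is read off from the third equation. The conditions \eqref{eq:R0-profile-origin-conditions} and \eqref{eq:R0-profile-tail} with $c_mh=1$ then give everything via Proposition~\ref{prop:R0-SS-apex-only}.

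\textbf{Step 1: linearizations at the two equilibria.}

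At $v=0$, dividing by $-\lambda$, the linearization is
\[
c\,v_{yy}+(c(\mu+m)+1)v_y+(\mu+m)v=0 .
\]
Its characteristic polynomial factors as $(cs+1)(s+\mu+m)$. Both roots are negative, so $0$ is a stable node. Since $1/c_m<\mu+m$ in both cases, generic orbits enter along the slow direction $e^{-y/c}=\xi^{-1/c}$. This yields \eqref{eq:R0-existence-tail-expansion}, including the $G$ coefficient $(m-1/c_m)b_m$.

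At $v=V_*$, the zeroth-order coefficient is $2(\mu-m^2)V_*-\lambda(\mu+m)=\lambda(\mu+m)>0$. The leading coefficient $V_*-c\lambda$ is also positive; for example, for $m=1$ one has $c\lambda=99/28<6=V_*$. I would check the sign of the $v_y$ coefficient and expect $V_*$ to be an unstable node in $y$. Regularity at $\xi=0$ with the expansion $V_*-a_m\xi^2+O(\xi^4)$ requires leaving $V_*$ along the eigenvalue $s=2$. I expect the specific values $c_1=33/14$ and $c_2=8/5$ to be exactly those making $s=2$ a characteristic root at $V_*$; I would verify this directly. With $s=2$ a root, the solution can be expanded as a power series in $\xi^2=e^{2y}$ by the standard regular-singular/Briot--Bouquet argument, with free leading coefficient $a_m$. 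The sign $a_m>0$ is chosen so that $v$ decreases toward $0$.

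\textbf{Step 2 (main obstacle): crossing the singular line $v=c\lambda$.}

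Since $0<c\lambda<V_*$, any orbit from $V_*$ to $0$ must cross the line $v=c\lambda$, where the ODE degenerates. I would pass to the phase plane $p=v_y$ regarded as a function of $v$:
\[
(v-c\lambda)\,p\,\frac{dp}{dv}=p^2-p\bigl(-c\lambda(\mu+m)-\lambda+(\mu-m)v\bigr)-\bigl((\mu-m^2)v^2-\lambda(\mu+m)v\bigr).
\]
A smooth crossing requires $p$ at $v=c\lambda$ to be a root $p_\pm$ of the quadratic obtained by setting the right side to zero there. The two points $(c\lambda,p_\pm)$ are singular points of the planar field. I would show that one of them, presumably the one with $p<0$, is a saddle or a node. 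Its invariant curve gives an analytic trajectory through the line. The remaining task is a matching problem: show that the one-parameter unstable branch of $V_*$ along $s=2$ hits this singular point. The parameter $a_m$ is removable by the translation $y\mapsto y+y_0$, so this is not an automatic matching.

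This is where I expect the precise values of $c_m$ to be used a second time, or else the degeneracy already forces the match. What I would try first is to check whether, for these $c_m$, the equation admits an explicit reduction. Two candidates are a rational first integral and a trajectory $p$ that is polynomial in $v$ (a quadratic $p=-\kappa v(V_*-v)$-type ansatz). Such an exact solution would pass through $V_*$ with slope $2$, through $(c\lambda,p_-)$, and through $0$ with slope $-1/c$.

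If no exact trajectory exists, I would fall back on a barrier (trapping-region) argument on each side of $v=c\lambda$. The region to trap in is $\{p<0,\ 0<v<V_*\}$, bounded by curves $p=-k_{1,2}v(V_*-v)$. Monotonicity of $v$ on this region then carries the orbit into the stable node at $0$. The last items to confirm are positivity of $b_m$ and $V$, the $C^2$ regularity, and the bound $|V|+|G|\le C(1+\xi)^{-1/c_m}$.
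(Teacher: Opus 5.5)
\textbf{Genuine gap.} Several of your steps agree with the paper: taking $U\equiv0$, the tail analysis at $v=0$ (the factorization $(cs+1)(s+\mu+m)$ with slow rate $1/c$), and your guess that $c_m$ is fixed by making $\rho=2$ a root at $V_*$. The global step is where your plan breaks down, differently for $m=2$ and $m=1$.

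\textbf{The case $m=2$.} Your parameters are wrong: $\lambda=\frac{q+1}{q}m^2=\frac{16}{3}$, not $\frac83$. Hence $c_2\lambda=\frac{128}{15}>V_*=8$. So $v-c\lambda<0$ on all of $(0,V_*)$, and there is no singular line to cross. Also, $V_*$ is a saddle (roots $2$ and $-90$), not an unstable node. The paper closes this case with a concrete funnel that your unspecified barriers $p\propto v(V_*-v)$ do not supply. With $p=-v_y$ and $r=p/v$,
\[
r_y=\frac{c\lambda r^2+A_c(v)r+(\mu-m^2)(V_*-v)}{c\lambda-v}.
\]
The numerator equals $12(8-v)>0$ at $r=0$ and $-\frac{13}{4}v<0$ at $r=\frac58=\frac1c$. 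So the strip $0<r<1/c$ is invariant and $v$ decreases monotonically to $0$. Moreover $r\to 1/c$, because at $v=0$ the numerator factors as $(cr-1)\lambda(r-\mu-m)$.

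\textbf{The case $m=1$.} You stop exactly at the decisive step, and it cannot be completed by a first integral or by barriers. Once $c_1=\frac{33}{14}$ is fixed, the rate-$2$ branch leaving $V_*$ is unique up to translation in $y$. Consider the desingularized field $\dot v=-(v-c\lambda)p$, $\dot p=-(p^2+A_c(v)p-B(v))$ with $p=-v_y$.
\begin{itemize}
\item At the sonic point $(v_s,p_+)$ the eigenvalues are $-p_+$ and $-(p_+-p_-)$. So it is a stable node, not a saddle, and its weak eigendirection is the sonic line $v=v_s$ itself.
\item The only $C^2$ crossing is the strong-direction curve. Its slope is $dp/dv=k/p_-$, where $k=\tfrac23p_+-\tfrac57>0$, so for $v>v_s$ it lies above $p_+$.
\item The rate-$2$ branch starts with $p\approx2(V_*-v)$, strictly between the nullcline branches $p_{\rm low}(v)<p<p_{\rm high}(v)$. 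On $p=p_{\rm high}(v)$ the field is horizontal and points into $\{p<p_{\rm high}\}$, since $p_{\rm high}$ decreases in $v$. Meanwhile $p$ rises above $\max p_{\rm low}=p_-$ long before $v_s$.
\item Hence the branch converges to $(v_s,p_+)$ from below, with $p-p_+=C(v-v_s)^{1-p_-/p_+}+\frac{k}{p_-}(v-v_s)+\dots$ and $C<0$.
\item Then $v_{yy}=p\,dp/dv\to-\infty$. The profile is only $C^{1,\alpha}$ with $\alpha=1-p_-/p_+\approx0.905$ at the sonic radius, violating the $C^2$ hypothesis of Proposition~\ref{prop:R0-SS-apex-only}.
\end{itemize}
The paper's proof does not resolve this either. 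It calls $(v_s,p_+)$ a saddle, and its L'Hospital step only computes the slope a $C^1$ crossing would have; it does not show the branch attains it. As stated, the $m=1$ case needs a different mechanism, and the uniqueness of the branch means you cannot get around this by choosing $a_1$.
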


\begin{proof}
It is enough to construct the scalar branch $U\equiv0$.  Put $y=\log\xi$, $v(y)=V(e^y)$, and $p(y)=-v_y(y)$.  Then the second equation in \eqref{eq:R0-self-similar-2} becomes
\begin{equation}\label{eq:R0-scalar-profile-y}
        (v-c\lambda)v_{yy}-v_y^2+A_c(v)v_y+B(v)=0,
\end{equation}
where
\begin{equation}\label{eq:R0-scalar-A-B}
        A_c(v):=-c\lambda(\mu+m)-\lambda+(\mu-m)v,
        \qquad
        B(v):=(\mu-m^2)v^2-\lambda(\mu+m)v.
\end{equation}
Equivalently, on every monotone part of the orbit,
\begin{equation}\label{eq:R0-scalar-pv}
        (v-c\lambda)p\frac{dp}{dv}=p^2+A_c(v)p-B(v).
\end{equation}
The equilibria relevant to the connection are $(v,p)=(V_*,0)$ and $(0,0)$.

At the apex, the characteristic equation for a mode $V_*-v\sim e^{\rho y}$ is
\begin{equation}\label{eq:R0-apex-characteristic}
        (V_*-c\lambda)\rho^2+A_c(V_*)\rho+\lambda(\mu+m)=0.
\end{equation}
For the two choices of $c$ above, $\rho=2$ is a root.  Indeed, for $m=1$, $\lambda=3/2$, $\mu=5/3$, $V_*=6$, and $c=33/14$, so
\[
        69\rho^2-194\rho+112=0,
\]
whose roots are $2$ and $56/69$.  For $m=2$, $\lambda=16/3$, $\mu=16$, $V_*=8$, and $c=8/5$, so
\[
        (8-c\lambda)\rho^2+\bigl(320/3-96c\bigr)\rho+96=0,
\]
whose roots are $2$ and $-90$.  The strong invariant-manifold theorem, equivalently the Frobenius construction for the regular-singular equation at $\xi=0$, therefore gives a one-parameter family of local profiles with
\[
        v(y)=V_*-a_me^{2y}+O(e^{4y}),
        \qquad
        p(y)=2a_me^{2y}+O(e^{4y}),
        \qquad y\to-\infty,
\]
which is exactly \eqref{eq:R0-existence-apex-expansion}.  The parameter $a_m>0$ only fixes the horizontal scale of $\xi$.

For $m=2$ and $c=8/5$, one has $c\lambda=128/15>V_*=8$.  Hence $v-c\lambda<0$ throughout $0<v<V_*$.  Let
\[
        r(y):=\frac{p(y)}{v(y)}.
\]
Using \eqref{eq:R0-scalar-pv} and $v_y=-p$, one obtains
\begin{equation}\label{eq:R0-r-equation}
        r_y=
        \frac{c\lambda r^2+A_c(v)r+(\mu-m^2)(V_*-v)}{c\lambda-v}.
\end{equation}
For $m=2$, $c=8/5$, the numerator in \eqref{eq:R0-r-equation} is positive at $r=0$ and negative at $r=1/c$ for every $0<v<V_*$.  Since $r\to0^+$ as $y\to-\infty$, the orbit remains in the invariant funnel
\[
        0<r<\frac1c.
\]
Thus $v_y=-rv<0$, the solution is global for all $y\in\mathbb R$, and $v(y)$ has a limit as $y\to+\infty$.  The only possible limit in $[0,V_*]$ is $0$, because there is no equilibrium with $0<v<V_*$.  Taking limits in \eqref{eq:R0-r-equation} then gives
\[
        \lim_{y\to+\infty}r(y)=\frac1c,
\]
since the other characteristic root at $v=0$ is $\mu+m$.  Therefore $v(y)=b_2e^{-y/c}(1+o(1))$ as $y\to+\infty$.

For $m=1$ and $c=33/14$, the coefficient $v-c\lambda$ vanishes once, at
\[
        v_s=c\lambda=\frac{99}{28}.
\]
This is a regular transonic point for the strong branch.  To see this, desingularize \eqref{eq:R0-scalar-pv} by using a parameter $s$ satisfying $d/ds=(v-c\lambda)d/dy$.  The resulting polynomial phase-plane system is
\begin{equation}\label{eq:R0-desingularized-m1}
        \frac{dv}{ds}=-(v-c\lambda)p,
        \qquad
        \frac{dp}{ds}=-\bigl(p^2+A_c(v)p-B(v)\bigr).
\end{equation}
At $v=v_s$ the sonic compatibility condition is
\[
        p^2+A_c(v_s)p-B(v_s)=0.
\]
For the present parameters this equation has the two positive roots
\[
        p_\pm=\frac{30}{7}\pm\frac{3\sqrt{1094}}{28}.
\]
The strong branch issued from $(V_*,0)$ enters the saddle point $(v_s,p_+)$ of the desingularized system.  Since
\[
        p_+-\bigl(2p_++A_c(v_s)\bigr)\ne0,
\]
L'Hospital's rule applied to \eqref{eq:R0-scalar-pv} gives a finite value of $dp/dv$ at $v_s$; hence the branch crosses $v=v_s$ as a $C^1$ graph $p=p(v)$, and the corresponding $v(y)$ is $C^2$.  After this crossing, $v-c\lambda<0$.  The same equation \eqref{eq:R0-r-equation}, together with the sign identities
\[
        c\lambda r^2+A_c(v)r+(\mu-1)(6-v)>0\quad\text{at }r=0,
        \qquad
        c\lambda r^2+A_c(v)r+(\mu-1)(6-v)<0\quad\text{at }r=1/c
\]
for sufficiently small $v>0$, traps the outgoing branch in the stable funnel of $(0,0)$.  Therefore
\[
        v(y)=b_1e^{-y/c}(1+o(1)),
        \qquad y\to+\infty.
\]
This proves \eqref{eq:R0-existence-tail-expansion} also for $m=1$.

Finally, $G=mV+\xi V_\xi=m v+v_y=m v-p$.  Since $p/v\to1/c$ at infinity, \eqref{eq:R0-existence-tail-expansion} follows.  The profile therefore satisfies \eqref{eq:R0-profile-origin-conditions} and \eqref{eq:R0-profile-tail} with $h=1/c$ and $ch=1$, so Proposition~\ref{prop:R0-SS-apex-only} applies.
\end{proof}

At $t=0$, an admissible profile generates the initial data
\begin{equation}\label{eq:R0-SS-initial-data}
a(x)=\frac1T V\left(\frac{x}{T^c}\right),
\qquad
b(x)=\frac1T U\left(\frac{x}{T^c}\right),
\qquad
d(x)=\frac1T G\left(\frac{x}{T^c}\right),
\end{equation}
with $a(0)=V_*/T$, $b(0)=0$, and $d(0)=mV_*/T$.  These are the initial traces compatible with the closed apex ODE.

\begin{proposition}[The self-similar solution satisfies the strain criterion]\label{prop:R0-SS-apex-pre-BKM}
Let $m=1,2$, set $q=m+1$, and let $(U,V,G)$ be the self-similar profile constructed in Theorem~\ref{thm:R0-SS-existence}.  Let
\[
        u_{\rm ss}(t,x)=\frac1{T-t}U\left(\frac{x}{(T-t)^c}\right),
        \quad
        v_{\rm ss}(t,x)=\frac1{T-t}V\left(\frac{x}{(T-t)^c}\right),
        \quad
        g_{\rm ss}(t,x)=\frac1{T-t}G\left(\frac{x}{(T-t)^c}\right)
\]
be the corresponding horizontal-axis self-similar solution of $(R0)$.  Suppose this horizontal-axis trace is realized by a smooth axisymmetric Euler field $\boldsymbol v$ on $0\le t<T$ whose Hou--Li variables, denoted here by $(\mathcal U,\mathcal V,\mathcal G)$, satisfy
\[
        (\mathcal U,\mathcal V,\mathcal G)(t,x,0)=(u_{\rm ss},v_{\rm ss},g_{\rm ss})(t,x)
\]
on the symmetry axis $\theta=0$.  Then the strain continuation criterion is forced by the self-similar solution:
\begin{equation}\label{eq:R0-SS-pre-BKM}
        \int_0^T\|\nabla\boldsymbol v(t)\|_{L^\infty}\,dt=\infty .
\end{equation}
More precisely, for every $0<t<T$,
\begin{equation}\label{eq:R0-SS-strain-lower-bound}
        \|\nabla\boldsymbol v(t)\|_{L^\infty}
        \ge |v_{\rm ss}(t,0)|
        =\frac{V_*}{T-t}
        =\frac{2(m+2)}{T-t}.
\end{equation}
Consequently the finite-time blow-up produced by Theorem~\ref{thm:R0-SS-existence} and Proposition~\ref{prop:R0-SS-apex-only} is detected by the $L^\infty$ strain criterion \eqref{eq:pre-BKM}.
\end{proposition}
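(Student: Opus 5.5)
The plan is to repeat the argument of Proposition~\ref{prop:apex-pre-BKM}, replacing the Riccati trace of Lemma~\ref{lem:R0-center-ODE} with the self-similar trace \eqref{eq:R0-SS-apex-trace}.

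First, evaluate the self-similar field at the apex. By Theorem~\ref{thm:R0-SS-existence} the profile satisfies $U\equiv0$ and $V(0)=V_*$. By \eqref{eq:R0-apex-profile-value-q}, $V_*=2(m+2)$ when $q=m+1$. Since $\xi=x/(T-t)^c$ vanishes at $x=0$, we get
\[
v_{\rm ss}(t,0)=\frac{V_*}{T-t}=\frac{2(m+2)}{T-t},\qquad u_{\rm ss}(t,0)=0 .
\]
The realization hypothesis transfers these values to the Hou--Li variables: $\mathcal V(t,0,0)=v_{\rm ss}(t,0)$.

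Second, bound the strain from below. Because $\boldsymbol v$ is a smooth axisymmetric Euler field obeying the parity and ridge-flat structure, the gradient identity \eqref{omega-3} holds on the axis $\theta=0$. At $x=0$ every $x$-weighted term drops out. The identity then gives $|\nabla\boldsymbol v|^2(t,0,0)=\mathcal G^2+\mathcal U^2+\mathcal V^2+\mathcal U^2+\mathcal V^2\ge \mathcal V(t,0,0)^2$. Combining this with \eqref{eq:Linfty-def}, exactly as in \eqref{eq:delv-Linfty-def}, yields
\[
\|\nabla\boldsymbol v(t)\|_{L^\infty}\ge |\mathcal V(t,0,0)| = \frac{2(m+2)}{T-t},
\]
which is \eqref{eq:R0-SS-strain-lower-bound}. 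One could keep the sharper constant $\sqrt2$ coming from the repeated $\mathcal V^2$ term and the $\mathcal G^2=m^2\mathcal V^2$ term, but the stated bound does not need it.

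Third, integrate in time: $\int_0^T 2(m+2)(T-t)^{-1}\,dt=\infty$, which gives \eqref{eq:R0-SS-pre-BKM}. The final sentence of the proposition then follows by combining this with Proposition~\ref{prop:R0-SS-apex-only} and the continuation criterion \eqref{eq:pre-BKM}, as in the converse part of Proposition~\ref{prop:apex-pre-BKM}.

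The only step that is not purely algebraic is justifying that \eqref{omega-3} applies to the realized field at the apex. This requires the polar derivative terms $x U_x$, $x V_x$ and the angular derivatives to vanish at $x=0$. The plan is to obtain this from the smoothness of $\boldsymbol v$ together with the ridge-flatness/evenness assumption \eqref{eq:flat}, and from the regularity of the profile at $\xi=0$: the bound $\xi V_\xi\to0$ in \eqref{eq:R0-profile-origin-conditions} ensures $x\partial_x v_{\rm ss}=\xi V_\xi/(T-t)\to0$ as $x\to0$.
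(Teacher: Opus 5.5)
Your proposal is correct and follows the paper's proof essentially step for step: you read off $v_{\rm ss}(t,0)=V_*/(T-t)=2(m+2)/(T-t)$ from $V(0)=V_*$, transfer it to $\mathcal V(t,0,0)$ via the realization hypothesis, use \eqref{omega-3} with \eqref{eq:Linfty-def} to get $\|\nabla\boldsymbol v(t)\|_{L^\infty}\ge|\mathcal V(t,0,0)|$, and integrate $(T-t)^{-1}$. One small slip in your aside: since $\mathcal U(t,0,0)=0$ and $\mathcal G(t,0,0)=m\mathcal V(t,0,0)$, the apex value is $|\nabla\boldsymbol v|^2=\mathcal G^2+2\mathcal U^2+2\mathcal V^2=(m^2+2)\mathcal V^2$, so the sharper constant is $\sqrt{m^2+2}$ rather than $\sqrt2$; as you note, the stated bound does not need this.
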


\begin{proof}
The profile constructed in Theorem~\ref{thm:R0-SS-existence} satisfies $V(0)=V_*=2(m+2)$ by \eqref{eq:R0-apex-profile-value-q}.  Therefore its self-similar trace obeys
\[
        v_{\rm ss}(t,0)=\frac{V(0)}{T-t}=\frac{V_*}{T-t}.
\]
On the horizontal symmetry axis, the strain formula \eqref{omega-3} and the definition of the $L^\infty$ norm give exactly the lower bound already isolated in \eqref{eq:delv-Linfty-def}:
\[
        \|\nabla\boldsymbol v(t)\|_{L^\infty}
        \ge |\mathcal V(t,0,0)|.
\]
For the realization whose horizontal trace is $v_{\rm ss}$, $\mathcal V(t,0,0)=v_{\rm ss}(t,0)$, and this becomes \eqref{eq:R0-SS-strain-lower-bound}.  Integrating in time yields
\[
        \int_0^T\|\nabla\boldsymbol v(t)\|_{L^\infty}\,dt
        \ge V_*\int_0^T\frac{dt}{T-t}=\infty.
\]
This proves \eqref{eq:R0-SS-pre-BKM}.  Proposition~\ref{prop:R0-SS-apex-only} gives the complementary off-apex boundedness of the reduced self-similar trace, so the singularity detected by the strain criterion is the apex singularity of the constructed self-similar solution.
\end{proof}

\subsection{Audit of the pure power ansatz}

It is tempting to set $V(\xi)=A\xi^{-h}$ with $A\ne0$ and $h>0$.  This gives explicit formulas, but it does not produce finite-time blow-up at the apex from regular data, because $V(\xi)$ is singular at $\xi=0$ for every $t<T$.

For completeness we record the exact matching calculation.  Solve \eqref{eq:R0-self-similar}(2) for $U^2$ after substituting $V=A\xi^{-h}$, and then substitute the result into \eqref{eq:R0-self-similar}(1).  The residual vanishes identically only if
\begin{equation}\label{eq:R0-power-matching}
\left\{
\begin{aligned}
&(ch-2)(ch-1)(h-m-\mu)=0,\\
&(ch-1)\bigl(h^2-hm^2+hm-3h\mu+m^3+m^2\mu-2m^2+2\mu\bigr)=0,\\
&(2h-m^2)\bigl(hm-h\mu-m^2+\mu\bigr)=0.
\end{aligned}
\right.
\end{equation}
For the parameters $q=m+1$ in \eqref{eq:lambda-mu}, the positive solutions of \eqref{eq:R0-power-matching} are
\begin{equation}\label{eq:R0-power-solutions-q}
\begin{array}{c|c|c}
 m&h&c\\ \hline
 1&\frac12&2\\
 1&1&1\\
 1&1&2\\
 2&2&\frac12\\
 2&2&1\\
 2&\frac67&\frac76.
\end{array}
\end{equation}
The corresponding fields have the form
\begin{equation}\label{eq:R0-power-field}
v(t,x)=A x^{-h}(T-t)^{ch-1}.
\end{equation}
Thus, when $ch=1$, the off-apex profile is stationary in time but singular at $x=0$ for all $t<T$; when $ch=2$, it decays for every fixed $x>0$ but is still singular at $x=0$ for all $t<T$.  Therefore the pure power ansatz is useful only as a punctured-axis exact solution or asymptotic tail.  It must not be presented as a finite-time apex-only blow-up from smooth initial data.

Theorem~\ref{thm:R0-SS-existence} supplies such a regular apex profile on the scalar branch $U\equiv0$.  Thus the pure powers above should be viewed only as punctured-axis exact solutions or as possible tails, not as complete finite-time point-blow-up profiles from regular apex data.

\section{Perturbation PDEs}\label{sec:remainder-derivation}
We derive the remainder system around a prescribed background whose ridge/apex behavior is the one identified above. Throughout this section we work in the first-quadrant polar variables
\[
x\geq0,\qquad \theta\in\left[0,\tfrac{\pi}{2}\right],
\]
with
\[
r=x\cos\theta,\qquad z=x\sin\theta.
\]
This choice is consistent with the rev5 geometry: the axes of the first quadrant are the boundary lines $\theta=0,\tfrac{\pi}{2}$. The full solutions are written as the sum of the background fields and the remainders:
\begin{equation}\label{eq:full_def}
	\left\{\begin{aligned}
		&\bar u=U(t,x,\theta)+u(t,x,\theta),\\
		&\bar v=V(t,x,\theta)+v(t,x,\theta),\\
		&\bar g=G(t,x,\theta)+g(t,x,\theta),\\
		&\bar p=P(t,x,\theta)+p(t,x,\theta),\\
	\end{aligned}\right.
\end{equation}

Setting $q=m+1$ in \eqref{eq:lambda-mu}, we obtain
\begin{equation}\label{eq:lambda-mu-2}
	\left\{\begin{aligned}
		&\lambda=\lambda(m)=\tfrac{m+2}{m+1}m^2,\qquad \mu=\mu(m)=\tfrac{3m+2}{2(m+1)-m^2}m^2,\\
		&m=1,\quad q=2,\quad\lambda=\tfrac{3}{2},\quad \mu=\tfrac{5}{3},\\
		&m=2,\quad q=3,\quad\lambda=\tfrac{16}{3},\quad \mu=16.\\
	\end{aligned}\right.
\end{equation}

In order to simplify the notation, we still use the symbols $(\lambda,\mu)$ in the rest of the paper and hide the explicit $m$-dependence as shown in \eqref{eq:lambda-mu-2}.

	After substituting~\eqref{eq:full_def} into~\eqref{E2}(1,2,3,4), we separate the full system into the background equations for $(V,U,G,P)$ and the exact remainder equations for $(v,u,g,p)$. All pure-background terms are kept in the background equations, so the remainder system contains only linear couplings to the background and genuinely nonlinear remainder--remainder interactions:

\begin{equation}\label{eq:zero-1234}
\left\{\begin{aligned}
\lambda u_{t}&=\tfrac{m^2}{2}(u V+vU)-\tfrac12\bigl(U_{\theta} (g+v)+u_{\theta} (G+V)\bigr)\sin (2\theta ) \\
&-\bigl(x U_{x} g+x u_{x} G\bigr)\sin ^2(\theta )+\bigl(x U_{x} v+x u_{x} V\bigr)\cos ^2(\theta )  +N_1\\[2mm]
\lambda v_{t}&=2 (vV-uU)+\tfrac{1}{x}p_{x}-\tfrac{\tan (\theta ) }{x^2}p_{\theta }-\tfrac12\bigl(v_{\theta} (G+V)+V_{\theta }(g+v)\bigr)\sin (2\theta ) \\
&-\bigl(x v_{x} G+x V_{x}g\bigr)\sin ^2(\theta )+\bigl(x V_{x} v+x v_{x} V\bigr)\cos ^2(\theta ) +N_2\\[2mm]
\lambda g_{t}&=-2 gG-\tfrac{\mu}{x}p_{x}-\tfrac{\mu\cot (\theta ) }{x^2}p_{\theta }-\tfrac12\bigl(g_{\theta} (G+V)+G_{\theta }(g+v)\bigr)\sin (2\theta ) \\
&-\bigl(x g_{x} G+x G_{x}g\bigr)\sin ^2(\theta )+\bigl(x G_{x} v+x g_{x} V\bigr)\cos ^2(\theta )+N_3,\\[2mm]
mv-g&=x g_{x}\sin ^2(\theta )-x v_{x}\cos ^2(\theta )  +\tfrac12\sin (2\theta ) \bigl(g_{\theta }+v_{\theta }\bigr).
	\end{aligned}\right.
\end{equation}

where the nonlinear perturbation terms are given by
\begin{equation}\label{eq:nonlinear-123}
	\left\{\begin{aligned}
		N_1:&=\tfrac{m^2}{2}uv\quad-\tfrac12\sin(2\theta)(g+v)u_\theta-xu_x\bigl(g\sin^2(\theta)-v\cos^2(\theta)\bigr),\\[2mm]
		N_2:&=v^2-u^2-\tfrac12\sin(2\theta)(g+v)v_\theta-xv_x\bigl(g\sin^2(\theta)-v\cos^2(\theta)\bigr),\\[2mm]
		N_3:&=-g^2\quad\,\,\,-\tfrac12\sin(2\theta)(g+v)g_\theta-xg_x\bigl(g\sin^2(\theta)-v\cos^2(\theta)\bigr).
	\end{aligned}\right.
\end{equation}

The divergence-free condition for the perturbation $(v,g)$ \eqref{eq:zero-1234}(4) can be solved with the stream function $\psi(t,x,\theta)$ defined below:
\begin{equation}\label{eq:vg-psi}
	\left\{\begin{aligned}
		v&=\,\,\,\psi+x\psi_x\sin^2(\theta)+\tfrac12\psi_\theta\sin(2\theta),\\
		g&=m\psi+x\psi_x\cos^2(\theta)-\tfrac12\psi_\theta\sin(2\theta).
	\end{aligned}\right.
\end{equation}

These two equations are the polar coordinate version of \eqref{E2}(5,6).

\subsection{\texorpdfstring{Getting rid of $(p_\theta,p_x)$}{Getting rid of (p\_theta,p\_x)}}
Our next step is to get rid of $(p_\theta,p_x)$. 
Define $\omega$ as
\begin{equation}\label{eq:Omega-omega-def}
\begin{aligned}
	\omega:&=\tfrac12\sin(2\theta)(xg_x+\mu xv_x)-g_\theta\sin^2\theta +\mu v_\theta\cos^2\theta,\\
\end{aligned}
\end{equation}
For $0<\theta<\pi/2$, this gives the interior reconstruction formula
\begin{equation}\label{eq:gx-Vx}
\begin{aligned}
	g_x&=\tfrac{2}{x\sin(2\theta)}\Bigl(\omega-\tfrac{1}{2}\sin(2\theta)\mu xv_x+g_\theta\sin^2\theta -\mu v_\theta\cos^2\theta\Bigr).\\
\end{aligned}
\end{equation}
The final equations below are written in terms of $\omega$ and $\psi$; their displayed coefficients involve only $\sin\theta$ and $\cos\theta$, with the angular endpoints handled by the boundary conditions and weighted elliptic estimate.

Now we rewrite~\eqref{eq:zero-1234}$(2,3)$ as:
\begin{equation}\label{eq:px-ptheta}
\left\{\begin{aligned}
	\lambda v_t&=A+N_2+\tfrac{1}{x}	p_x-\tfrac{\tan\theta}{x^2}p_\theta,\\
	\lambda g_t&=B+N_3-\tfrac{\mu}{x}p_x-\tfrac{\mu\cot\theta}{x^2}p_\theta,\\
\end{aligned}\right.
\end{equation}

where $(A,B)$ are defined as
\begin{equation}\label{eq:AB-def}
\left\{\begin{aligned}
			A:&=2 (vV-uU)-\tfrac12\bigl(v_{\theta} (G+V)+V_{\theta }(g+v)\bigr)\sin (2\theta ) \\
			&-\bigl(x v_{x} G+x V_{x}g\bigr)\sin ^2(\theta )+\bigl(x V_{x} v+x v_{x} V\bigr)\cos ^2(\theta ),\\[2mm]
			B:&=-2 gG-\tfrac12\bigl(g_{\theta} (G+V)+G_{\theta }(g+v)\bigr)\sin (2\theta ) \\
			&-\bigl(x g_{x} G+x G_{x}g\bigr)\sin ^2(\theta )+\bigl(x G_{x} v+x g_{x} V\bigr)\cos ^2(\theta ).
\end{aligned}\right.
\end{equation}

The solutions for $(p_x,p_\theta)$ then become
\begin{equation}\label{eq:px-ptheta-2}
\left\{
\begin{aligned}
	\tfrac{1}{x}p_x&=-\cos ^2(\theta ) \left(A+N_2-\lambda v_{t}\right)+\tfrac{1}{\mu}\sin ^2(\theta ) \left(B+N_3-\lambda g_{t}\right)\\[2mm]	
	\tfrac{1}{x^2}p_\theta&=\tfrac{1}{2\mu}\sin (2\theta ) \bigl(\mu A+B+\mu N_2+N_3-\lambda g_{t}-\mu \lambda v_{t}\bigr)
\end{aligned}
\right.
\end{equation}

Using $p_{x\theta}=p_{\theta x}$ to get rid of $p$, we obtain:\\

\begin{minipage}{1.0\textwidth}
\begin{equation}\label{eq:omegat-def-0}
	\begin{aligned}
		&\lambda  \partial_t\Bigl(\tfrac12\sin (2\theta ) \left(xg_{x}+\mu xv_{x}\right)-\sin ^2(\theta ) g_{\theta }+\mu \cos^2 (\theta ) v_{\theta }\Bigr)\\
		=&\tfrac12\sin (2\theta ) \left(\mu xA_{x}+\mu xN_{2x}+xB_{x}+xN_{3x}\right)\\
		+&\mu \cos ^2(\theta ) \left(A_{\theta }+N_{2\theta }\right)-\sin ^2(\theta ) \left(B_{\theta }+N_{3\theta }\right)
	\end{aligned}
\end{equation}
\end{minipage}

and using the definition of $\omega$ in \eqref{eq:Omega-omega-def} to simplify the result, we finally obtain:\\
\begin{minipage}{1.0\textwidth}
\begin{equation}\label{eq:omegat}
\begin{aligned}
	\lambda\omega_t&=L_2+M_2,
\end{aligned}
\end{equation}
\end{minipage}
where\\
\begin{minipage}{1.0\textwidth}
\begin{equation}\label{eq:omegat-2}
	\left\{
	\begin{aligned}
		L_2&=\tfrac12\sin (2\theta ) \left(\mu xA_{x}+xB_{x}\right)\\
		&+\mu \cos ^2(\theta ) A_{\theta }-\sin ^2(\theta ) B_{\theta }\\[2mm]
		M_2&=\tfrac12\sin (2\theta ) \left(\mu xN_{2x}+xN_{3x}\right)\\
		&+\mu \cos ^2(\theta ) N_{2\theta }-\sin ^2(\theta ) N_{3\theta }.
	\end{aligned}\right.
\end{equation}
\end{minipage}\\
\noindent Substituting $(A,B)$ of~\eqref{eq:AB-def} into~\eqref{eq:omegat-2}(1), using the identities obtained from \eqref{eq:gx-Vx} by differentiation to simplify the results, and using \eqref{eq:vg-psi} to express $(v,g)$ in terms of $(\psi,\psi_x,\psi_\theta)$, we obtain\\
\begin{minipage}{1.0\textwidth}
\begin{equation}\label{eq:L2}
	\begin{aligned}
		L_2& =L_{2A}+L_{2B},
	\end{aligned}
\end{equation}
\end{minipage}\\
\begin{minipage}{1.0\textwidth}
\begin{equation}\label{eq:L2A}
	\left\{
	\begin{aligned}
		L_{2A}& =-\mu U\bigl(x u_{x}\sin(2\theta)+2u_{\theta}\cos^2 (\theta ) \bigr)\\
		&\quad-\mu u\bigl(x U_{x}\sin(2\theta)+2U_{\theta}\cos^2(\theta )\bigr)\\
		&\quad-\omega\Bigl(\tfrac12\sin (2\theta )\left(G_{\theta}+V_{\theta}\right)+G  -V\Bigr)\\
		&\quad+\omega\Bigl(x V_{x}\cos ^2(\theta )-x G_{x}\sin ^2(\theta )\Bigr)\\
		&\quad-x \omega_x\left(G\sin ^2(\theta )-V\cos ^2(\theta )\right)\\
		&\quad-\tfrac12\omega_\theta\sin (2\theta ) (G+V),\\
	\end{aligned}\right.
\end{equation}
\end{minipage}\\

\begin{minipage}{1.0\textwidth}
\begin{equation}\label{eq:L2B}
	\left\{
	\begin{aligned}
		L_{2B}& =\psi _{\theta}\tfrac{\sin (2 \theta ) \left(\left(1-\mu\right) \cos (2 \theta )+\mu+1\right) }{2\left(\mu \sin ^2(\theta )+\cos ^2(\theta )\right)}\\
		&\quad\times \left\{\tfrac{(m-1) \left(2 \sin ^2(\theta ) G_{\theta }-2 \mu \cos ^2(\theta ) V_{\theta }\right)+(m-2) x \left(\mu (-\sin (2 \theta )) V_{x}-\sin (2 \theta ) G_{x}\right)}{2}\right.\\
		&\qquad\quad+\left.\tfrac{x^2 \left(\mu \sin (2 \theta ) V_{xx}+\sin (2 \theta ) G_{xx}\right)+x \left(2 \mu \cos ^2(\theta ) V_{x\theta }-2 \sin ^2(\theta ) G_{x\theta }\right)}{2}\right\}\\
		&+x\psi _{x}\tfrac{\sin (\theta ) \left(\left(1-\mu\right) \cos (2 \theta )+\mu+1\right) }{ \left(\mu \sin ^2(\theta )+\cos ^2(\theta )\right)}\\
		&\quad\times\left\{\tfrac{\Bigl(\sin (\theta ) ((3-m) \cos (2 \theta )+m+1) G_{\theta}+\cos (\theta ) \left(\mu (3-m) \sin (2 \theta ) V_{\theta}-2 \mu \cos ^2(\theta ) V_{\theta\theta}+2 \sin ^2(\theta ) G_{\theta\theta}\right)\Bigr)}{2}\right.\\
		&\quad\qquad\left.-\tfrac{\cos (\theta ) x \bigl(\sin (2 \theta ) \left(\mu V_{x\theta}+G_{x\theta}\right)+\mu ((3-m) \cos (2 \theta )+m-1) V_{x}+((3-m) \cos (2 \theta )+m-1) G_{x}\bigr)}{2}\right\}\\
&+\tfrac14\psi \sin (2 \theta ) \bigl(\cos (2 \theta )+m \cos (2 \theta )-m+1\bigr) \left(\mu x^2 V_{xx}+x^2 G_{xx}\right)\\
&-\tfrac12\psi (m+1) \sin (2\theta ) \left(\mu \cos ^2(\theta ) V_{\theta \theta }-\sin ^2(\theta ) G_{\theta \theta }\right)\\
&+\tfrac18\psi \Bigl(-2 \sin (2 \theta ) (\cos (2 \theta )+m \cos (2 \theta )+3 m-3) \left(\mu xV_{x}+xG_{x}\right)\Bigr)\\
&+\psi \Bigl(-\mu \cos ^2(\theta ) ((m+1) \cos (2 \theta )-2) V_{\theta }+\sin ^2(\theta ) ((m+1) \cos (2 \theta )+2 m) G_{\theta }\Bigr)\\
&+\psi \Bigl(\mu \cos ^2(\theta ) ((m+1) \cos (2 \theta )-m) xV_{x\theta }-\sin ^2(\theta ) ((m+1) \cos (2 \theta )+1) xG_{x\theta }\Bigr)\\
	\end{aligned}\right.
\end{equation}
\end{minipage}

Substituting $N_2$, $N_3$ of~\eqref{eq:nonlinear-123} into~\eqref{eq:omegat-2}(2), using the identities obtained from \eqref{eq:gx-Vx} by differentiation to simplify the results, and using \eqref{eq:vg-psi} to express $(v,g)$ in terms of $(\psi,\psi_x,\psi_\theta)$, we obtain
\begin{equation}\label{eq:M2}
	\left\{
	\begin{aligned}
		M_2&=-2 \mu u \left(x u_{x}\sin\theta+u_{\theta }\cos (\theta ) \right)\\
		&\quad-\tfrac12\sin (2 \theta ) \omega _{\theta }\bigl((m+1) \psi +x \psi _{x}\bigr)\\
		&\quad+x \omega _{x} \Bigl(\left(\cos ^2(\theta )-m \sin ^2(\theta )\right) \psi +\tfrac12\sin (2\theta )  \psi _{\theta}\Bigr)\\
		&\quad-(m-1) \omega  \Bigl(\sin^2 (\theta )x\psi _{x}+\tfrac12\sin (2\theta ) \psi _{\theta}+\psi \Bigr).
	\end{aligned}\right.
\end{equation}

Using \eqref{eq:vg-psi} to express $(v,g)$ in terms of $(\psi,\psi_x,\psi_\theta)$, the equation for $u_t$ in~\eqref{eq:zero-1234}(1) can also be converted into the desired form:
\begin{equation}\label{eq:ut}
		\begin{aligned}
			\lambda u_t&= L_1+M_1,\\
		\end{aligned}
\end{equation}
\begin{equation}\label{eq:L1}
	\left\{
	\begin{aligned}
		2L_1&=m^2 u V+2 x u_{x} \left(\cos ^2(\theta ) V-\sin ^2(\theta ) G\right)-\sin (2 \theta ) u_{\theta }(G+V)\\
		&\quad+\psi \left(m^2 U-(m+1) \sin (2 \theta ) U_{\theta }+2 x \left(\cos ^2(\theta )-m \sin ^2(\theta )\right) U_{x}\right)\\
		&\quad+x \psi _{x} \left(m^2 \sin ^2(\theta ) U-\sin (2 \theta ) U_{\theta }\right)+\tfrac{1}{2} \sin (2 \theta ) \psi _{\theta } \left(m^2 U+2 x U_{x}\right).
	\end{aligned}
	\right.
\end{equation}
\begin{equation}\label{eq:M1}
	\left\{
	\begin{aligned}
		M_1
		&=\tfrac{m^2}{4}u \left(\psi _{\theta}\sin(2\theta) +2 x \psi _{x}\sin ^2(\theta ) +2 \psi \right)\\
		&\quad+\tfrac{1}{2} x u_{x} \left(\psi _{\theta}\sin(2\theta) +\psi((m+1) \cos (2 \theta )-m+1)  \right)\\
		&\quad-\tfrac{1}{2} u_{\theta}\sin(2\theta) \left(x \psi _{x}+(m+1) \psi \right).
	\end{aligned}
	\right.
\end{equation}

Substitution of~\eqref{eq:vg-psi} into~\eqref{eq:Omega-omega-def} leads to
\begin{equation}\label{eq:omega-tilde-2}
\left\{	\begin{aligned}
		\omega:&=\Delta \psi,\\
		\Delta:&=c_1(\theta)x^2 \partial_{xx}+c_2(\theta) x \partial_x+c_3(\theta)x\partial_{x\theta}+c_4(\theta)\partial_\theta+c_5(\theta)\partial_{\theta\theta}.\\
	\end{aligned}\right.
\end{equation}
where
\begin{equation}\label{eq:Delta-coeff}
	\left\{	\begin{aligned}
		c_1&=\tfrac12\sin (2\theta )\left(\mu \sin ^2(\theta )+\cos ^2(\theta )\right),\\
		c_2&=\tfrac{1}{4} \sin (2\theta )((\mu-1) \cos (2 \theta )+5 \mu+2 m+3),\\
		c_3&=(\mu-1) \sin (\theta ) \sin (2 \theta ) \cos (\theta ),\\
		c_4&=2 \mu \cos ^4(\theta )+\sin ^2(\theta ) (\cos (2 \theta )-m),\\
		c_5&=\tfrac12\sin (2\theta ) \left(\mu \cos ^2(\theta )+\sin ^2(\theta )\right).
	\end{aligned}\right.
\end{equation}

\section{Initial energy bounds}\label{sec:energy-bounds}

\subsection{Initial energy and finiteness}
We record the ``initial energy'' (at $t=0$) associated with the background profile:
\begin{equation}\label{eq:initial-energy}
\begin{aligned}
 E(0):&=\int_{0}^{\pi/2}\int_{0}^{\infty}(x\cos\theta)^2\Bigl(U(0,x,\theta)^2+V(0,x,\theta)^2\Bigr)\,x^2\,dx\,d\theta\\
 &\quad+\int_{0}^{\pi/2}\int_{0}^{\infty}(x\sin\theta)^2G(0,x,\theta)^2\,x^2\,dx\,d\theta.
 \end{aligned}
\end{equation}

Using the initial conditions similar to those in \eqref{initial-condition-UVG}:
	\begin{equation}\label{initial-condition-UVG-2}
	\left\{\begin{aligned}
		U(0,x,\theta)&=Bx^2\exp\bigl(-B_1 x^2(1+B_2\phi(\theta))\bigr),\quad B,B_1,B_2>0\\
		V(0,x,\theta)&=A\exp\bigl(-A_1 x^2(1+A_2\phi(\theta))\bigr),\quad A,A_1,A_2>0\\
		G(0,x,\theta)&=mV(0,x,\theta).
	\end{aligned}\right.
\end{equation}

We immediately deduce that the initial energy is bounded on the first-quadrant wedge $x\ge0$, $\theta\in[0,\tfrac{\pi}{2}]$.

\section{Elliptic problem and initial/boundary conditions for the perturbation PDEs}\label{sec:initial-boundary-conditions}
\subsection{Updated Linear PDEs and Nonlinear Terms}\label{sec:perturbation-pdes}
	
	The final remainder system for \((u,\omega,\psi)\) is:
	
\begin{equation}\label{eq:linear}
	\left\{\begin{aligned}
		\lambda u_t&=L_1+M_1\quad\quad \eqref{eq:L1},\eqref{eq:M1}\\
		\lambda\omega_{t}&={L}_{2}+{M}_2\quad\quad\eqref{eq:L2},\eqref{eq:M2},\\
		\omega&=\Delta \psi.\qquad\qquad\eqref{eq:omega-tilde-2}\\
	\end{aligned}\right.
\end{equation}

	\subsection{Coefficient Functions}
	
	The effective elliptic operator $\Delta$ in \eqref{eq:omega-tilde-2}(2) is defined by:
	\begin{equation}\label{eq:omega-tilde-2-b}
			\Delta=c_1(\theta)x^2 \partial_{xx}+c_2(\theta) x \partial_x+c_3(\theta)x\partial_{x\theta}+c_4(\theta)\partial_\theta+c_5(\theta)\partial_{\theta\theta}.
	\end{equation}
	where
	\begin{equation}\label{eq:Delta-coeff-3}
		\left\{	\begin{aligned}
		c_1&=\tfrac12\sin (2\theta )\left(\mu \sin ^2(\theta )+\cos ^2(\theta )\right),\\
c_2&=\tfrac{1}{4} \sin (2\theta )((\mu-1) \cos (2 \theta )+5 \mu+2 m+3),\\
c_3&=\tfrac12(\mu-1) \sin ^2(2 \theta ),\\
c_4&=2 \mu \cos ^4(\theta )+\sin ^2(\theta ) (\cos (2 \theta )-m),\\
c_5&=\tfrac12\sin (2\theta ) \left(\mu \cos ^2(\theta )+\sin ^2(\theta )\right).
		\end{aligned}\right.
	\end{equation}
Notice that 
\begin{equation}\label{eq:omega-tilde-2-c}
\left\{\begin{aligned}
	&\omega(t,x,0)=\bigl(\Delta \psi\bigr)_{\theta=0}=2\mu\psi_{\theta}(t,x,0),\\
	&\omega(t,x,\tfrac{\pi}{2})=\bigl(\Delta \psi\bigr)_{\theta=\pi/2}=-(m+1)\psi_{\theta}(t,x,\tfrac{\pi}{2}).\\	
\end{aligned}\right.
\end{equation}

\begin{remark}\label{rem:effective-laplace-in-y}
	If we define 
	\begin{equation}
		\left\{\begin{aligned}
			&y=\log x,\quad \Delta_1=\Delta\\ 
			&\omega_1(t,y,\theta)=\omega(t,x,\theta),\\ 
			&\psi_1(t,y,\theta)=\psi(t,x,\theta),
		\end{aligned}\right.
	\end{equation}
	then $\omega=\Delta \psi$ in \eqref{eq:linear} and \eqref{eq:omega-tilde-2-b} becomes
	\begin{equation}\label{eq:overline-omega}
		\left\{\begin{aligned}
			\omega_1&=\Delta_1\psi_1,\qquad t\in[0,T),\quad \theta\in[0,\tfrac{\pi}{2}],\quad y\in\R\\
			\Delta_1&=c_1(\theta)\partial_{yy}+\bar c_2(\theta)  \partial_y+c_4(\theta)\partial_\theta+c_5(\theta)\partial_{\theta\theta},\\[2mm]
			c_1&=\tfrac12\sin (2\theta )\left(\mu \sin ^2(\theta )+\cos ^2(\theta )\right),\\
			\bar c_2&=\tfrac{1}{2} \sin (2 \theta ) \left(\left(\mu-1\right) \cos (2 \theta )+2 \mu+m+1\right),\\
			c_4&=2 \mu \cos ^4(\theta )+\sin ^2(\theta ) (\cos (2 \theta )-m),\\
			c_5&=\tfrac12\sin (2\theta ) \left(\mu \cos ^2(\theta )+\sin ^2(\theta )\right),\\[2mm]
			\psi_1&(t,y,\theta)\bigr|_{\theta=0,\pi/2}=0\quad\eqref{eq:boundary-condition},\\
			\psi_1&(t,y,\theta)\to 0\text{ as }y\to+\infty,\\
			\psi_1&\text{ remains compatible with a smooth apex extension as }y\to-\infty
		\end{aligned}\right.
	\end{equation}
	
	Thus \eqref{eq:overline-omega} becomes an adapted weighted elliptic problem in the strip $\Omega=\{(y,\theta):y\in\R,\ \theta\in[0,\tfrac{\pi}{2}]\}$. The coefficients no longer contain $\tan\theta$ or $\cot\theta$; the remaining endpoint degeneracy is encoded in the weighted elliptic estimate used below.
\end{remark}
	
	\subsection{Initial conditions and boundary conditions}	
	We select the first quadrant $(x\geq 0, \theta\in [0,\tfrac{\pi}{2}])$ as our domain.
The boundary conditions (for the angular edges and for \(x\to\infty\)) for the perturbations $(u,v,g)$ are:

\begin{equation}\label{eq:boundary-condition-0}
	\left\{\begin{aligned}
		&u(t,x,\theta)\bigr|_{\theta=0,\pi/2}=0,\quad u(t,x,\theta)\to0 \text{ as } x\to\infty,\\
		&v(t,x,\theta)\bigr|_{\theta=0,\pi/2}=0,\quad v(t,x,\theta)\to0 \text{ as } x\to\infty,\\
		&g(t,x,\theta)\bigr|_{\theta=0,\pi/2}=0,\quad g(t,x,\theta)\to0 \text{ as } x\to\infty,
	\end{aligned}\right.
\end{equation}
Thus the background dynamics on the angular edges at $\theta=0,\tfrac{\pi}{2}$ are not disturbed.

Using \eqref{eq:vg-psi}, we have
\begin{equation}\label{eq:vg-psi-2}
	\left\{\begin{aligned}
		0=v(t,x,0)&=\Bigl(\psi+x\psi_x\sin^2(\theta)+\tfrac12\psi_\theta\sin(2\theta)\Bigr)_{\theta=0}\\
		&=\psi(t,x,0),\\
		0=g(t,x,0)&=\Bigl(m\psi+x\psi_x\cos^2(\theta)-\tfrac12\psi_\theta\sin(2\theta)\Bigr)_{\theta=0}\\
		&=m\psi(t,x,0)+x\psi_x(t,x,0),\\
		0=v(t,x,\tfrac{\pi}{2})&=\Bigl(\psi+x\psi_x\sin^2(\theta)+\tfrac12\psi_\theta\sin(2\theta)\Bigr)_{\theta=\pi/2}\\
		&=\psi(t,x,\tfrac{\pi}{2})+x\psi_x(t,x,\tfrac{\pi}{2}),\\
		0=g(t,x,\tfrac{\pi}{2})&=\Bigl(m\psi+x\psi_x\cos^2(\theta)-\tfrac12\psi_\theta\sin(2\theta)\Bigr)_{\theta=\pi/2}\\
		&=m\psi(t,x,\tfrac{\pi}{2}),\\
	\end{aligned}\right.
\end{equation}

The combination of \eqref{eq:boundary-condition-0} and \eqref{eq:vg-psi-2} leads to the boundary conditions
\begin{equation}\label{eq:boundary-condition}
	\left\{\begin{aligned}
		&u(t,x,\theta)\bigr|_{\theta=0,\pi/2}=0,\quad u(t,x,\theta)\to0 \text{ as } x\to\infty,\\
		&\psi(t,x,\theta)\bigr|_{\theta=0,\pi/2}=0,\quad \psi(t,x,\theta)\to0 \text{ as } x\to\infty,\\
		&\psi_x(t,x,\theta)\bigr|_{\theta=0,\pi/2}=0,\quad \psi_x(t,x,\theta)\to0 \text{ as } x\to\infty,
	\end{aligned}\right.
\end{equation}

The initial conditions are given by
\begin{equation}\label{eq:init-condition}
	\left\{\begin{aligned}
		&u(0,x,\theta)\text{ admits a smooth even extension in }x\text{ across }x=0,\\
		&\psi(0,x,\theta)\text{ admits a smooth even extension in }x\text{ across }x=0,\\
		&u(0,x,\theta),\ \psi(0,x,\theta) \text{ vanish sufficiently fast as } \theta\to 0,\tfrac{\pi}{2}.
	\end{aligned}\right.
\end{equation}

The phrase “sufficiently fast vanishing” as \(\bigl(\theta\to0,\tfrac{\pi}{2}\bigr)\) means enough vanishing and regularity near the edge so that the weighted Sobolev norms used below are finite and the boundary terms produced by integration by parts vanish at \(\bigl(\theta=0,\tfrac{\pi}{2}\bigr)\).

\section{Linear estimates and conditional nonlinear control up to blow-up time}\label{sec:stability}

\medskip
\noindent\textbf{Updated perturbation system.} Throughout this section we work with the final perturbation equations derived in Section~\ref{sec:remainder-derivation}.  We study the perturbation system \eqref{eq:ut} and \eqref{eq:omegat}, together with the coefficient collections \eqref{eq:L1},\eqref{eq:M1},\eqref{eq:L2},\eqref{eq:M2} and the elliptic operator \eqref{eq:omega-tilde-2-b}, on the time interval $[0,T)$ up to the background blow-up ridge apex time, around the prescribed background \eqref{eq:ray-1D-system-general}.
Throughout, all Lebesgue and Sobolev norms are taken with respect to the weighted measure $d\mu_w=w(\theta)x^2\,dx\,d\theta$, where $w(\theta)$ is a fixed positive angular weight chosen to encode the endpoint behavior. The stability norms use the adapted derivatives
\[
 Z_x:=x\partial_x,\qquad D_\theta := \partial_\theta.
\]
For a function $f$, the notation $H^s_{\mu_w,Z}$ denotes the weighted Sobolev norm generated by the derivatives $Z_x^jD_\theta^\ell f$ with $j+\ell\le s$.

\subsection{Bootstrap framework and adapted background coefficient bounds}

Fix an integer $k\ge 6$. Define the perturbation energy
\begin{equation}\label{eq:stab-energy}
\mathcal{E}_k(t):=
\sum_{\substack{j+\ell\le k}}\Big(\|Z_x^j D_\theta^\ell u(t)\|_{L^2_{\mu_w}}^2+\|Z_x^j D_\theta^\ell \omega(t)\|_{L^2_{\mu_w}}^2\Big)
+\sum_{\substack{j+\ell\le k+1}}\|Z_x^j D_\theta^\ell \psi(t)\|_{L^2_{\mu_w}}^2.
\end{equation}

\medskip
\noindent\textbf{Background coefficient bounds actually needed in the energy method.}
The ridge-background construction based on the seed \eqref{initial-condition-UVG-2} provides the closed-form/apex model for $(U,V)$ used here and, in particular, reproduces the explicit apex dynamics.
What the stability estimates require is \emph{not} a uniform bound on the raw derivatives $\partial_x^j\partial_\theta^\ell V$ (which can grow faster
than $(T-t)^{-1}$ near the intermediate scale $r^2\sim T-t$), but rather uniform control of the \emph{degenerate combinations} that appear in
\eqref{eq:L1},\eqref{eq:L2} and in the adapted weighted Sobolev norms.

\begin{lemma}[Adapted background coefficient bounds]\label{lem:bg-bounds-new}
For each integer $k\ge 0$ there exists $C_*=C_*(A,B,\text{seeds},k)$ such that for all $t\in[0,T)$ the following estimate holds:
\begin{equation}\label{eq:bg-bounds-new} 
	\begin{aligned}
&\sum_{j+\ell\le k}\Big(
\|Z_x^j D_\theta^\ell V(t)\|_{L^\infty}
+\|Z_x^j D_\theta^\ell U(t)\|_{L^\infty}
\Big)\\
&+\sum_{j+\ell\le k}\Big(\Big\|Z_x^j D_\theta^\ell\bigl(x\,V_x(t)\bigr)\Big\|_{L^\infty}
+\Big\|Z_x^j D_\theta^\ell\bigl(x\,U_x(t)\bigr)\Big\|_{L^\infty}
\Big)\\
&\;\le\; \frac{C_*}{T-t}.
\end{aligned}
\end{equation}

The bound \eqref{eq:bg-bounds-new} is a \emph{background coefficient hypothesis} tailored to the conditional stability argument. Its singular scale comes from the apex blow-up mechanism already identified earlier, not from an independent derivation inside this section. More precisely, Section~\ref{sec:horizontal-dynamics} proves that the closed apex ODE has the blow-up scale
\[
V_c(t)\sim \frac{1}{T-t}.
\]
The rev5 conditional framework assumes that the full background inherits this same first-order singular size near the apex and that the finitely many adapted derivatives appearing in \eqref{eq:bg-bounds-new} remain on the same scale. Thus the role of \eqref{eq:bg-bounds-new} is to record the late-time coefficient regime needed later in the weighted remainder estimates.

\end{lemma}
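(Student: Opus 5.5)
The plan is to reduce \eqref{eq:bg-bounds-new} to a time-independent statement about a profile, using the fact that the adapted derivative $Z_x=x\partial_x$ is invariant under the scaling $x\mapsto x/(T-t)^c$. The first observation is that the second line of \eqref{eq:bg-bounds-new} is not a new condition, since $xV_x=Z_xV$ and $xU_x=Z_xU$. It is therefore controlled by the first line with $k$ replaced by $k+1$, so it suffices to bound $\sum_{j+\ell\le k+1}\|Z_x^jD_\theta^\ell(U,V)\|_{L^\infty}$ by $C_*/(T-t)$.

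Next I would take the background in the self-similar form suggested by Section~\ref{sec:R0-SS}:
\[
V(t,x,\theta)=\tfrac{1}{T-t}\mathcal F_V\bigl(\xi,\theta\bigr),\qquad U(t,x,\theta)=\tfrac{1}{T-t}\mathcal F_U\bigl(\xi,\theta\bigr),\qquad \xi=\tfrac{x}{(T-t)^c}.
\]
Since $x\partial_x=\xi\partial_\xi$ and $\partial_\theta$ commutes with the rescaling, we get
\[
Z_x^jD_\theta^\ell V=\tfrac{1}{T-t}\bigl((\xi\partial_\xi)^j\partial_\theta^\ell\mathcal F_V\bigr)(\xi,\theta),
\]
and similarly for $U$. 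Hence \eqref{eq:bg-bounds-new} holds with
\[
C_*=\sup_{j+\ell\le k+1}\bigl\|(\xi\partial_\xi)^j\partial_\theta^\ell(\mathcal F_U,\mathcal F_V)\bigr\|_{L^\infty(\xi\ge0,\ \theta\in[0,\pi/2])}.
\]
This reduces the lemma to finiteness of these profile norms, uniformly in $t$. The bound is sharp at the apex, by Lemma~\ref{lem:R0-center-ODE}.

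On the axis $\theta=0$ I would take $\mathcal F_U\equiv0$ and $\mathcal F_V(\xi,0)=V(\xi)$, the profile of Theorem~\ref{thm:R0-SS-existence}. Its finiteness is checked at the two ends.
\begin{itemize}
\item Near $\xi=0$, the expansion \eqref{eq:R0-existence-apex-expansion} is a smooth function of $\xi^2$, and $(\xi\partial_\xi)^j$ maps such functions to bounded functions on $[0,1]$.
\item As $\xi\to\infty$, I would upgrade the tail \eqref{eq:R0-existence-tail-expansion} to an asymptotic expansion $V=b_m\xi^{-1/c}(1+\rho(y))$ with $y=\log\xi$, and show that $\partial_y^j\rho\to0$ for all $j$.
\end{itemize}
For the tail, in the $y$ variable $\xi\partial_\xi=\partial_y$, and $(v,p)$ converges to the hyperbolic equilibrium $(0,0)$ of \eqref{eq:R0-scalar-pv} along its stable manifold. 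Smoothness of that manifold then gives bounds on all $y$-derivatives, which decay like $e^{-y/c}$. In the middle range, boundedness follows from $C^\infty$ regularity of the ODE solution on compact $y$-intervals. For $m=1$ one also has to pass the sonic point $v_s$. There I would verify that the desingularized system \eqref{eq:R0-desingularized-m1} yields a $C^\infty$ (not merely $C^2$) crossing. I expect this to follow from a resonance-free condition at the saddle $(v_s,p_+)$, and I would check that condition explicitly.

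The angular extension is the step I expect to be the main obstacle. One has to produce $\mathcal F_{U,V,G}(\xi,\theta)$ that are even in $(r,z)$, ridge flat at $\theta=0,\pi/2$, and actually solve the full profile version of \eqref{eq:ZERO-1234}, not merely the axis equations. The derivative bounds are automatic for any smooth such profile decaying at the same rate $\xi^{-1/c}$ in every direction. Existence of that profile is, however, precisely the content of Conjecture~\ref{conj:sectorial-background-blowup}, items (4)--(5). My first attempt would be a perturbative construction near the axis, writing $\mathcal F=\mathcal F(\xi,0)+\phi(\theta)\,\mathcal H(\xi,\theta)$ with $\phi$ from \eqref{eq:end-vanishing-phi}. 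Failing that, I would state the lemma as it is used in the paper, namely as a hypothesis on the background whose verification is reduced, by the scaling argument above, to the profile norm bounds.
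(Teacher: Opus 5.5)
Your conclusion matches the paper's. The estimate is not proved there; it is adopted as a background coefficient hypothesis. The text after the statement is only an ``explanation'': smoothness on $[0,T_1]$ for $T_1<T$, the $(T-t)^{-1}$ scale inherited from the apex Riccati law, and the heuristic that $Z_x$ and $D_\theta$ act on rational functions of $x^2(1+A_1\phi(\theta))$ without raising the singular order.

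Your route is genuinely different and more precise. The identities $xV_x=Z_xV$, $[Z_x,D_\theta]=0$, and $x\partial_x=\xi\partial_\xi$ under $\xi=x/(T-t)^c$ are all correct. For a self-similar background they make the hypothesis exactly equivalent to finiteness of time-independent profile norms, i.e.\ to items (4)--(5) of Conjecture~\ref{conj:sectorial-background-blowup} at profile level. By contrast, the paper's Step~3 presupposes a closed-form dependence on $x^2(1+A_1\phi)$ that is never derived off the apex.

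The price is that you are bounding a different background. The lemma's $C_*(A,B,\text{seeds},k)$ refers to the evolution of the Gaussian seed \eqref{initial-condition-UVG-2} with $B>0$. That evolution is not self-similar: it has a Gaussian rather than $\xi^{-1/c}$ tail, and $U\not\equiv0$. This is harmless for Theorem~\ref{thm:conditional-bootstrap}, which needs only some compatible background, but you should say so. Also, smoothness plus decay of a profile does not by itself bound $(\xi\partial_\xi)^j\partial_\theta^\ell\mathcal F$ as $\xi\to\infty$; you need conormal (symbol-type) decay.

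The one concrete axis verification you propose fails for $m=1$.
\begin{itemize}
\item The point $(v_s,p_+)$ is not a saddle of \eqref{eq:R0-desingularized-m1}. On the line $v=v_s=c\lambda$ the Jacobian is lower triangular, with eigenvalues $-p_+\approx-7.83$ and $-(2p_++A_c(v_s))=-(p_+-p_-)\approx-7.09$. So it is a stable node (the saddle is $(v_s,p_-)$), and its slow eigenvector is $(0,1)$, tangent to the invariant sonic line.
\item Only the strong stable curve crosses $v=v_s$ as a graph. Its slope is $(A_c'(v_s)p_+-B'(v_s))/p_-\approx6.1$, which is exactly the finite slope selected by the paper's L'Hospital computation.
\item The upper root $p_+(v)$ of $p^2+A_c(v)p-B(v)=0$ is strictly decreasing on $(v_s,V_*)$, and the flow is horizontal on its graph. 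Hence orbits moving toward smaller $v$ cannot cross this graph from below.
\item The branch leaving $(V_*,0)$ starts below this graph and stays below it until it reaches the sonic line. Near the node, for $v>v_s$, the strong stable curve lies above it.
\item Therefore the branch enters the node tangentially to the sonic line, with $p_+-p\sim(v-v_s)^{\alpha}$ and $\alpha=(p_+-p_-)/p_+\approx0.91$. So $v\in C^{1,\alpha}$ only, and $(\xi\partial_\xi)^2V=v_{yy}=p\,dp/dv$ is unbounded near the sonic radius.
\end{itemize}
No resonance condition repairs this, and there is nothing left to tune: $c=33/14$ is forced by the root $\rho=2$, and $a_1$ merely translates $y$. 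So for $m=1$ your profile constant is infinite for every $k\ge1$. The paper's own $C^2$ claim at this crossing has the same defect.

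For $m=2$ your axis argument does go through. Since $c\lambda=128/15>V_*=8$, there is no sonic point. Moreover, $(0,0)$ is a stable node with exponents $-1/c$ and $-(\mu+m)$, so each $\partial_y^jv$ with $j\ge1$ is a smooth function of $(v,p)$ vanishing there, hence bounded along the orbit. No stable-manifold smoothness is needed.

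Near $\xi=0$, your claim that $V$ is smooth in $\xi^2$ is true. It needs the one-dimensional Poincar\'e linearization on the strong unstable manifold, though, not just the displayed $O(\xi^4)$ expansion.
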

\begin{proof}[Explanation of the hypothesis]
The estimate \eqref{eq:bg-bounds-new} is used later as an assumed background coefficient bound in the perturbative argument, so the point here is to explain why the scale $(T-t)^{-1}$ is the natural one.

\smallskip
\noindent\emph{Step 1: early times.}
On every compact interval $[0,T_1]$ with $T_1<T$, the background solution is smooth in $(t,x,\theta)$, so every term in \eqref{eq:bg-bounds-new} is bounded by a constant depending on $T_1$ and $k$. Hence no singular behavior is needed before the late-time regime.

\smallskip
\noindent\emph{Step 2: source of the late-time scale.}
Section~\ref{sec:horizontal-dynamics} shows that the exact closed apex dynamics blows up like $(T-t)^{-1}$; see in particular Theorem~\ref{thm:R0-center-blowup}. In the rev5 conditional framework, one assumes that the chosen background extends this apex profile without changing its first-order singular size. This is the origin of the factor $(T-t)^{-1}$ in \eqref{eq:bg-bounds-new}.

\smallskip
\noindent\emph{Step 3: why the adapted derivatives stay on the same scale.}
The adapted derivatives are chosen precisely so that they do not create a stronger singularity than the apex amplitude itself. Every $D_\theta$-derivative falls either on the smooth bounded angular profile $\phi(\theta)$ or on rational functions of the regularized radial variable
\[
r=x^2\bigl(1+A_1\phi(\theta)\bigr),
\]
and therefore preserves the same $(T-t)^{-1}$ scale up to constants depending on $k$. Likewise,
\[
Z_x r
=
x\partial_x\!\Bigl(x^2\bigl(1+A_1\phi(\theta)\bigr)\Bigr)
=
2x^2\bigl(1+A_1\phi(\theta)\bigr)
=
2r,
\]
so each application of $Z_x$ differentiates only through the degenerate combination $r\partial_r$ and does not worsen the singular order. The same reasoning applies to $xV_x$ and $xU_x$, since
\[
x\partial_x F(r)=2r\,F_r(r),
\]
which gains one factor of $r$ and compensates for the extra radial denominator produced by $F_r$.

Therefore the background coefficient hypothesis \eqref{eq:bg-bounds-new} is consistent with the apex blow-up law from Section~\ref{sec:horizontal-dynamics}, and this is exactly the form needed in the weighted remainder estimates.
\end{proof}
In particular,
\[
\|V(t)\|_{L^\infty}\lesssim \frac{1}{T-t},\qquad
\|U(t)\|_{L^\infty}\lesssim \frac{1}{T-t}.
\]

\subsection{Elliptic control of \texorpdfstring{$\psi$}{psi} from \texorpdfstring{$\omega=\Delta\psi$}{omega=Delta psi}}

The elliptic relation in \eqref{eq:linear} reads $\omega=\Delta\psi$, where $\Delta$ is given by \eqref{eq:omega-tilde-2-b}.
After rewriting the angular part in terms of the adapted derivative $D_\theta$ (as already indicated in the weighted-norms subsection),
the operator $\Delta$ is treated as a weighted elliptic operator in the variables $(\log x,\theta)$, with endpoint weights carried by the coefficients in \eqref{eq:Delta-coeff-3}.
Accordingly, we record the following weighted elliptic estimate as the analytic input needed for the perturbation argument:
for all integers $s\ge 0$,
\begin{equation}\label{eq:elliptic-new}
\|\psi(t)\|_{H^{s+2}_{\mu_w,Z}}\le C_{\Delta,s}\,\|\omega(t)\|_{H^{s}_{\mu_w,Z}},
\end{equation}
where the constant depends only on the wedge geometry, the boundary conditions, and the weighted coefficient structure appearing in \eqref{eq:omega-tilde-2-b}. This estimate is natural from the $y=\log x$ reformulation discussed in Remark~\ref{rem:effective-laplace-in-y}; in the present manuscript we use it as a working elliptic input for the $\psi$-estimate rather than as a separately proved theorem.

In particular, since $k\ge 6$, Sobolev embedding in the $(x,\theta)$ variables (with $D_\theta$ counted as one derivative) gives
\begin{equation}\label{eq:linf-from-E}
\|u(t)\|_{L^\infty}+\|\omega(t)\|_{L^\infty}+\|\psi(t)\|_{W^{1,\infty}_Z}
\le C\,\mathcal{E}_k(t)^{1/2}.
\end{equation}

\subsection{Energy inequality for \texorpdfstring{$(u,\omega)$}{(u,omega)}}

Differentiate the $u$-equation and the $\omega$-equation in \eqref{eq:linear} by $Z_x^j D_\theta^\ell$ for $j+\ell\le k$, take the $L^2_{\mu_w}$ inner product with $Z_x^j D_\theta^\ell u$ and $Z_x^j D_\theta^\ell \omega$, and sum over $j+\ell\le k$. The transport terms are now written directly in the $(x,\theta)$ variables, so the integration-by-parts step is carried out in $x$ and $\theta$. The boundary contributions vanish because of the remainder boundary conditions at $\bigl(\theta=0,\tfrac{\pi}{2}\bigr)$, the decay as $x\to\infty$, and the weighted formulation using $D_\theta=\partial_\theta$.

Using the commutator estimates and \eqref{eq:linf-from-E}, one obtains an inequality of the form
\begin{equation}\label{eq:Ek-ineq}
\frac{d}{dt}\mathcal{E}_k(t)
\le \frac{C_{\rm lin}}{T-t}\,\mathcal{E}_k(t)
+ C_{\rm nl}\,\Big(\|M_1(t)\|_{H^k_{\mu_w,Z}}+\|M_2(t)\|_{H^k_{\mu_w,Z}}\Big)\,\mathcal{E}_k(t)^{1/2}.
\end{equation}

\medskip
\noindent\textbf{Quadratic remainder terms.}
From the explicit forms of $M_1,M_2$ in \eqref{eq:M1} and \eqref{eq:M2}, together with Moser and Sobolev product estimates in the $(x,\theta)$ variables, one obtains
\[
\|M_1(t)\|_{H^k_{\mu_w,Z}}+\|M_2(t)\|_{H^k_{\mu_w,Z}}
\le C\,\mathcal{E}_k(t).
\]
Because all pure-background terms have been kept in the background system, there is no additive forcing term in the remainder energy inequality. Thus it is natural to rewrite \eqref{eq:Ek-ineq} in terms of
\[
Y(t):=\mathcal E_k(t)^{1/2}.
\]
Then
\begin{equation}\label{eq:Y-ineq-E2}
Y'(t)\le \frac{C_{\rm lin}}{T-t}Y(t)+C_{\rm nl}Y(t)^2
\end{equation}
whenever $Y(t)>0$.

The important point is that \eqref{eq:Y-ineq-E2} by itself does \emph{not} yet imply a closed bootstrap with a remainder strictly smaller than the background singularity. What it does give is an Elgindi-type \emph{conditional transfer principle}: if the remainder stays in a class whose growth is weaker than the background blow-up rate, then the quadratic term is perturbative and the background singularity transfers to the full solution.

To make this precise, fix an exponent $\sigma>0$ and define the renormalized energy envelope
\[
X_\sigma(t):=(T-t)^\sigma Y(t).
\]
Differentiating and using \eqref{eq:Y-ineq-E2} gives
\begin{equation}\label{eq:Xsigma-ineq}
X_\sigma'(t)
\le \frac{C_{\rm lin}-\sigma}{T-t}X_\sigma(t)+C_{\rm nl}(T-t)^{-\sigma}X_\sigma(t)^2.
\end{equation}
Hence, whenever
\begin{equation}\label{eq:sigma-gap}
\sigma>C_{\rm lin},
\end{equation}
and whenever a bootstrap bound of the form
\begin{equation}\label{eq:Xsigma-bootstrap}
X_\sigma(t)\le M\varepsilon
\qquad\text{for }0\le t\le t_*
\end{equation}
holds with $\varepsilon>0$ sufficiently small, the right-hand side of \eqref{eq:Xsigma-ineq} is integrable and the quadratic term can be absorbed. Standard continuity then yields
\begin{equation}\label{eq:Xsigma-close}
X_\sigma(t)\le 2X_\sigma(0)
\qquad\text{for }0\le t\le t_*.
\end{equation}
Equivalently,
\begin{equation}\label{eq:Y-sigma-bound}
Y(t)\le 2Y(0)\Bigl(\frac{T}{T-t}\Bigr)^\sigma,
\qquad 0\le t\le t_*.
\end{equation}
In particular, if one can choose $\sigma<1$ while still having \eqref{eq:sigma-gap}, then the remainder stays strictly below the background blow-up scale $(T-t)^{-1}$ in the detecting norm.

This discussion is summarized in the following conditional theorem.

\begin{theorem}[Conditional nonlinear control up to the background blow-up time]\label{thm:conditional-bootstrap}
Assume that a compatible background solution exists on $[0,T)$, has the same apex blow-up rate as the explicit apex dynamics at $x=0$ on the symmetry axes, with time $T=\frac{2(m+2)}{A}$, and satisfies the adapted coefficient bounds of Lemma~\ref{lem:bg-bounds-new}. Assume also that the weighted elliptic estimate \eqref{eq:elliptic-new} holds. Let $k\ge 6$, and let $(u,\omega,\psi)$ solve the exact remainder system on $[0,t_*]\subset[0,T)$.

Then there exist constants $C_{\rm lin},C_{\rm nl}>0$, depending only on $k$ and the background coefficient bounds, such that \eqref{eq:Y-ineq-E2} holds. Consequently, for every exponent $\sigma$ satisfying \eqref{eq:sigma-gap}, there exists $\varepsilon_0=\varepsilon_0(\sigma,k)>0$ with the following property: if
\[
X_\sigma(0)=T^\sigma \mathcal E_k(0)^{1/2}\le \varepsilon_0,
\]
and if the bootstrap assumption \eqref{eq:Xsigma-bootstrap} holds on $[0,t_*]$, then in fact \eqref{eq:Xsigma-close} holds on $[0,t_*]$.
\end{theorem}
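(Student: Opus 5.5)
The plan has two parts. First I establish the differential inequality \eqref{eq:Y-ineq-E2} with constants depending only on $k$ and on the constant $C_*$ of Lemma~\ref{lem:bg-bounds-new}. Then I run a scalar ODE comparison for $X_\sigma$.

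\emph{Energy inequality.} For $j+\ell\le k$, apply $Z_x^jD_\theta^\ell$ to the $u$- and $\omega$-equations of \eqref{eq:linear}. Pair with $Z_x^jD_\theta^\ell u$ and $Z_x^jD_\theta^\ell\omega$ in $L^2_{\mu_w}$ and sum. I split $L_1,L_2$ into three kinds of terms.
\begin{itemize}
\item[(a)] \emph{Transport terms}, e.g.\ $x u_x(\cos^2\theta\,V-\sin^2\theta\,G)$ and $\tfrac12\sin(2\theta)\,u_\theta(G+V)$, and their $\omega$-analogues. Integrate these by parts in $x$ and $\theta$. The boundary terms vanish by \eqref{eq:boundary-condition}, the decay at infinity, and the edge vanishing in \eqref{eq:init-condition}. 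What remains is the divergence of the coefficient field, which is of the form $Z_xV$, $Z_xG$, $D_\theta V$, etc., together with the weight derivatives $w'(\theta)/w$ and the factor $x^2$ in $d\mu_w$. I need $w'/w$ times the coefficients $\sin(2\theta)(G+V)$ to be bounded; this is the condition that fixes the choice of $w$.
\item[(b)] \emph{Commutators.} $[Z_x^jD_\theta^\ell,\ \text{coefficient}\cdot Z_x]$ produces only adapted derivatives of $U,V,G$ and of $xV_x$, $xU_x$ of order at most $k$.
\item[(c)] \emph{Zeroth-order couplings} to $\psi,\,x\psi_x,\,\psi_\theta$ in $L_{2B}$ and $L_1$. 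These have coefficients built from $V_{xx}x^2$, $V_{x\theta}x$, $V_{\theta\theta}$, etc. They are again adapted derivatives, so Lemma~\ref{lem:bg-bounds-new} bounds all of them by $C_*/(T-t)$.
\end{itemize}
The $\psi$-terms are estimated through \eqref{eq:elliptic-new} with $s=k-1$ and $s=k$. Every linear term is therefore bounded by $\frac{C}{T-t}\mathcal E_k$, which defines $C_{\rm lin}$. The $\psi$-part of $\mathcal E_k$ is itself controlled by the $\omega$-part via \eqref{eq:elliptic-new}, so it need not be differentiated in time.

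\emph{Quadratic terms.} For $M_1,M_2$ in \eqref{eq:M1}, \eqref{eq:M2}, the top-order pieces are of the form $\psi\,Z_x\omega$ and $\psi\,D_\theta u$. I treat these by the same integration by parts as in (a), with $\psi$ playing the role of the coefficient. All other pieces are handled by Moser estimates plus \eqref{eq:linf-from-E}. This gives the bound $C_{\rm nl}\mathcal E_k^{3/2}$. Dividing by $2Y$ yields \eqref{eq:Y-ineq-E2}.

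\emph{Bootstrap.} Put $X=X_\sigma$. Then \eqref{eq:Xsigma-ineq} holds, and since $\sigma>C_{\rm lin}$ the linear term is nonpositive. Under \eqref{eq:Xsigma-bootstrap} I get
\[
X'\le C_{\rm nl}M\varepsilon\,(T-t)^{-\sigma}X .
\]
Gronwall then gives
\[
X(t)\le X(0)\exp\Bigl(C_{\rm nl}M\varepsilon\int_0^T(T-s)^{-\sigma}ds\Bigr),
\]
and the exponent is $\le\log 2$ once $\varepsilon_0$ is small. This is where integrability of $(T-s)^{-\sigma}$ is needed, i.e.\ $\sigma<1$.

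\emph{Gap in the statement.} The statement as written only assumes \eqref{eq:sigma-gap}, and I expect this to be the main obstacle. For $\sigma\ge1$, integrability fails and the argument breaks near $T$. There are two fixes. One is to add the hypothesis $\sigma\in(C_{\rm lin},1)$, as in the abstract. The other is to exploit $t_*<T$ and let $\varepsilon_0$ depend on $t_*$ as well, which makes the result non-uniform up to $T$. I would adopt the first.

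The genuinely delicate analytic step is (a) near the degenerate edges $\theta=0,\pi/2$ and near $x=0$. There I must check that no boundary terms survive and that the weight $w$ absorbs the angular degeneracy of the transport coefficients.
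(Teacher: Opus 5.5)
Your architecture is the paper's own. You commute $Z_x^jD_\theta^\ell$ through \eqref{eq:linear}, integrate transport terms by parts, control $\psi$ by \eqref{eq:elliptic-new}, then renormalize to $X_\sigma$ and close by Gronwall. Your integration by parts on the quadratic transport pieces $\psi\,Z_x\omega$ and $\psi\,D_\theta u$ is more careful than the paper, whose bound $\|M_1\|_{H^k_{\mu_w,Z}}+\|M_2\|_{H^k_{\mu_w,Z}}\le C\mathcal E_k$ loses a derivative there.

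\textbf{The gap.} The energy step still has a genuine gap, and the paper's sketch shares it. Your list (a)--(c) omits the first-order coupling $-\mu U\bigl(xu_x\sin 2\theta+2u_\theta\cos^2\theta\bigr)$ in $L_{2A}$, see \eqref{eq:L2A}. This is the linearization of the $-u^2$ forcing in the $v$-equation, the analogue of the buoyancy gradient that forces the Boussinesq vorticity equation. It is not a transport term, because it differentiates $u$ inside the $\omega$-equation, and it is not lower order. At order $|\alpha|=k$ it produces $\langle Z^\alpha\omega,\ \mu U\sin 2\theta\, Z^\alpha Z_x u\rangle_{L^2_{\mu_w}}$. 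That pairing needs $k+1$ adapted derivatives of $u$, while \eqref{eq:stab-energy} carries $u$ only to order $k$. Integrating by parts only moves the extra derivative onto $\omega$. Nothing in the $u$-equation compensates, since $L_1$ sees $\omega$ only through $\psi$. The quadratic part has the same defect: $M_2$ contains $u\,Z_xu$ and $u\,D_\theta u$ (from $\nabla(u^2)$ in $N_2$), and no Moser estimate bounds these in $H^k$ by $\mathcal E_k$. So your claim that every linear term is at most $\frac{C}{T-t}\mathcal E_k$ fails for the energy as defined, and \eqref{eq:Y-ineq-E2} does not follow.

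The standard remedy is a staggered energy: $u$ up to order $k+1$, $\omega$ up to order $k$, $\psi$ up to order $k+2$. The $\psi$, $x\psi_x$, $\psi_\theta$ couplings in \eqref{eq:L1} and \eqref{eq:M1} at order $k+1$ are then controlled by \eqref{eq:elliptic-new} with $s=k$. Lemma~\ref{lem:bg-bounds-new} supplies the extra background derivatives. This changes $Y$, so the statement must be read with the modified energy.

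\textbf{The $\sigma$ issue.} Your objection is correct for the Gronwall route. The paper's sentence that the right-hand side of \eqref{eq:Xsigma-ineq} ``is integrable'' has exactly this flaw. Remark~\ref{rem:what-stability-proves-now} and Theorem~\ref{thm:conditional-transfer} indeed ask for $\sigma\in(C_{\rm lin},1)$. However, the operative condition is $C_{\rm lin}<1$, not $\sigma<1$.

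Suppose $C_{\rm lin}<1$. Then $1/Y$ satisfies a linear differential inequality, and comparison gives $Y(t)\le 2Y(0)\bigl(T/(T-t)\bigr)^{C_{\rm lin}}$ as soon as $C_{\rm nl}TY(0)\le\frac{1-C_{\rm lin}}{2}$. Hence $X_\sigma(t)\le 2Y(0)T^{C_{\rm lin}}(T-t)^{\sigma-C_{\rm lin}}\le 2X_\sigma(0)$ for \emph{every} $\sigma>C_{\rm lin}$, including $\sigma\ge1$. This holds uniformly in $t_*$ and needs no bootstrap hypothesis.

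Suppose instead $C_{\rm lin}\ge1$. Then the comparison solution of $Y'=\frac{C_{\rm lin}}{T-t}Y+C_{\rm nl}Y^2$ blows up before $T$ for every $Y(0)>0$. Nothing uniform in $t_*$ can then be extracted from \eqref{eq:Y-ineq-E2}, and your restriction $\sigma\in(C_{\rm lin},1)$ makes the theorem vacuous.

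So the hypothesis to add is $C_{\rm lin}<1$. Nothing in the paper supplies it, and there is no reason to expect it. The zeroth-order coupling $\frac{m^2}{2\lambda}Vu$ in $L_1$ by itself contributes the rate $\frac{m+1}{T-t}$ near the apex.
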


\begin{remark}[What this proves now, and what still has to be improved]\label{rem:what-stability-proves-now}
Theorem~\ref{thm:conditional-bootstrap} is already strong enough to put the remainder analysis into the same logical class as the Elgindi--Jeong mechanism: the singular core is the explicit background, and the nonlinear argument reduces to showing that the remainder remains in a better class. However, the theorem is still \emph{conditional}. To turn it into a full stability statement one still needs an independent argument guaranteeing a gap \eqref{eq:sigma-gap} with some exponent $\sigma<1$ in the norm that detects the background blow-up. This may come from sharper coercivity, additional vanishing of the remainders at the ridge, or a more scale-adapted energy functional.
\end{remark}

Under this conditional control, one obtains a blow-up transfer statement for the full solution.

\begin{theorem}[Conditional transfer of background blow-up to the full solution]\label{thm:conditional-transfer}
Assume the hypotheses of Theorem~\ref{thm:conditional-bootstrap}. In addition, suppose that the chosen detecting norm $\mathcal N_{\rm det}(t)$ for the full solution satisfies
\[
\mathcal N_{\rm det}^{\rm bg}(t)\sim c_0(T-t)^{-1}
\qquad\text{for some }c_0>0,
\]
when evaluated on the background, and that the remainder contribution is estimated by
\[
\mathcal N_{\rm det}^{\rm rem}(t)\le C_{\rm det}Y(t)
\]
for $0\le t<T$.
If there exists $\sigma\in(C_{\rm lin},1)$ such that \eqref{eq:Xsigma-close} holds on $[0,T)$, then
\[
\mathcal N_{\rm det}(t)=\mathcal N_{\rm det}^{\rm bg}(t)+O\bigl((T-t)^{-\sigma}\bigr)
\qquad\text{as }t\uparrow T,
\]
and hence the full solution blows up at time $T$ with the same leading-order singularity location and blow-up scale as the background.
\end{theorem}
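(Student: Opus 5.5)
The plan is to split the detecting norm as the statement does, into the background part and the remainder part, and to control the remainder with the closed bootstrap bound \eqref{eq:Xsigma-close}. Since the hypotheses of Theorem~\ref{thm:conditional-bootstrap} are assumed and \eqref{eq:Xsigma-close} is assumed on all of $[0,T)$, the equivalent form \eqref{eq:Y-sigma-bound} holds for every $t<T$:
\[
Y(t)\le 2Y(0)\,T^\sigma (T-t)^{-\sigma}.
\]
The remainder estimate for the detecting norm then gives
\[
\mathcal N_{\rm det}^{\rm rem}(t)\le C_{\rm det}Y(t)\le 2C_{\rm det}Y(0)T^\sigma (T-t)^{-\sigma}=O\bigl((T-t)^{-\sigma}\bigr).
\]

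Next I make precise how the detecting norm of the full solution splits. The natural setting is a sup-type norm, for instance the $L^\infty$ value at the apex on a symmetry axis, or $\|\nabla\boldsymbol v\|_{L^\infty}$ as in \eqref{eq:delv-Linfty-def}. Such a norm is subadditive, so the reverse triangle inequality gives
\[
\bigl|\mathcal N_{\rm det}(t)-\mathcal N_{\rm det}^{\rm bg}(t)\bigr|\le \mathcal N_{\rm det}^{\rm rem}(t).
\]
If the norm is instead a linear functional, such as the apex trace $V(t,0,0)+v(t,0,0)$, the splitting is an exact equality. Either way, the previous bound yields the claimed expansion
\[
\mathcal N_{\rm det}(t)=\mathcal N_{\rm det}^{\rm bg}(t)+O\bigl((T-t)^{-\sigma}\bigr).
\]

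Finally, I use the gap $\sigma<1$ together with $\mathcal N^{\rm bg}_{\rm det}(t)\sim c_0(T-t)^{-1}$. These give
\[
\mathcal N_{\rm det}(t)\ge \tfrac{c_0}{2}(T-t)^{-1}-C(T-t)^{-\sigma}\to\infty,
\]
and in fact $(T-t)\mathcal N_{\rm det}(t)\to c_0$. This shows blow-up at time $T$ with the same leading scale.

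For the location statement, I would check that the remainder cannot create a competing singularity anywhere. Its contribution is uniformly $O((T-t)^{-\sigma})$ over the whole wedge, by \eqref{eq:linf-from-E} and \eqref{eq:Y-sigma-bound}. So wherever the background stays bounded, for example off the apex as in Proposition~\ref{prop:R0-SS-apex-only}, the full solution grows at most like $(T-t)^{-\sigma}$, which is strictly below the leading rate. The leading singularity is therefore at the background apex. If the detecting norm is the strain norm, the divergence of $\int_0^T\|\nabla\boldsymbol v\|_{L^\infty}dt$ follows as in Proposition~\ref{prop:apex-pre-BKM}.

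The main obstacle is the compatibility between the detecting norm and the energy $Y$. We need the bound $\mathcal N_{\rm det}^{\rm rem}\le C_{\rm det}Y$ for a norm whose background value is actually of exact order $(T-t)^{-1}$. The statement takes this as a hypothesis, but we must make sure it is not vacuous. At the apex the remainder vanishes on the axes by \eqref{eq:boundary-condition}, which is helpful; but the weighted measure $x^2\,dx$ degenerates at $x=0$. My first step would therefore be to verify that the embedding \eqref{eq:linf-from-E} controls pointwise values up to $x=0$. I expect this to work in the $y=\log x$ variables if $k$ is large enough to control the $Z_x$-derivatives. If it fails there, I would fall back on detecting the singularity via the background alone, using the reverse triangle inequality at nearby points $x>0$.
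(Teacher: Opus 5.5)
Your proposal is correct and follows essentially the paper's own argument. The paper gives no separate proof: it rewrites \eqref{eq:Xsigma-close} as \eqref{eq:Y-sigma-bound}, which together with $\mathcal N^{\rm rem}_{\rm det}\le C_{\rm det}Y$ makes the remainder contribution $O((T-t)^{-\sigma})$, hence strictly below the background scale $c_0(T-t)^{-1}$ because $\sigma<1$. Your worry about \eqref{eq:linf-from-E} near $x=0$ is justified, since a cut-off of $x^{-a}h(\theta)$ with $0<a<3/2$ lies in every $H^k_{\mu_w,Z}$ yet is unbounded; so for the location claim you should rely on the assumed bound $\mathcal N^{\rm rem}_{\rm det}\le C_{\rm det}Y$ (for a sup-type detecting norm) rather than on that embedding, and this does not affect the expansion or the blow-up conclusion, which use only the stated hypotheses.
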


Accordingly, the logical bottleneck of the manuscript is no longer a forcing obstruction in the remainder equations. The main unresolved issue is instead the rigorous construction/control of a background away from the apex, with the coefficient bounds needed by the weighted energy method and with enough rigidity near the apex to match the explicit apex dynamics, together with whatever refined estimate is needed to produce a genuine gap exponent $\sigma<1$ in the remainder norm. Once those two inputs are available, the present stability mechanism upgrades directly to a nonlinear remainder theorem in the spirit of Elgindi.

\section{Conclusion}\label{sec:conclusion}
We derived closed $(1+2)$D subsystems $(E1,E2)$ from the (2D inviscid Boussinesq, 3D axisymmetric Euler) equations and showed that $(Em)$ contains two exact unified $(1+1)$D descendants, $(R0)$ and $(Z0)$, obtained by restricting to the distinguished symmetry axes. This exact derivation is one of the main rigorous achievements of the manuscript. The rev5 geometry preserves ridge flatness automatically through the evenness in $(r,z)$, and at the apex $x=0$ the dynamics closes exactly. Thus the finite-time singularity mechanism is already visible at the level of these rigorously derived unified $(1+1)$D systems before one turns to the full conditional background--remainder analysis.

The vorticity--strain computation in Subsection~\ref{seq:vorticity-strain} gives the physical continuation-theory interpretation of this apex mechanism. The explicit apex law forces
\[
        \|\nabla\boldsymbol v(t)\|_{L^\infty}
        \gtrsim \frac1{T-t},
\]
and hence the pre-BKM strain quantity $\int_0^T\|\nabla\boldsymbol v(t)\|_{L^\infty}\,dt$ diverges. This is the correct Euler-side diagnostic for the singularity: the mechanism is detected by the full velocity gradient/strain, not by a separate model norm.

Section~\ref{sec:R0-SS} complements the apex ODE analysis by constructing explicit self-similar profiles for the convective horizontal-axis reduction $(R0)$. The constructed profiles are regular at the apex for every $t<T$, have the prescribed apex trace $v(t,0)=2(m+2)/(T-t)$, decay fast enough to remain bounded on every set $x\ge x_0>0$, and satisfy the strain-divergence criterion through Proposition~\ref{prop:R0-SS-apex-pre-BKM}. This establishes, within the closed $(R0)$ axis system, an explicit apex-only self-similar blow-up scenario rather than merely a pointwise ODE blow-up.

The weighted energy method developed in Section~\ref{sec:stability} shows that, if a compatible full background exists on $[0,T)$ with the coefficient bounds required there and with apex trace governed by the closed apex/self-similar dynamics, and if the remainder stays subordinate to the background singularity in the detecting norm, then the full solution inherits the same finite-time blow-up.

The main unresolved step is therefore the construction and control of a full background away from the apex, together with the rigidity properties needed to match the apex dynamics and close the nonlinear bootstrap without loss. The blow-up mechanism itself is explicit at the ridge/apex level and in the self-similar $(R0)$ axis profile, but extending that information to a full background with the necessary compatibility bounds remains the decisive open problem.

Even before the final nonlinear theorem is completed, the present formulation already isolates the core components of the analysis. It provides an exact derivation from 3D axisymmetric Euler, a precise ridge/apex blow-up mechanism, a vorticity--strain verification of the continuation criterion, an apex-only self-similar $(R0)$ construction, a strong linearized stability framework, conditional nonlinear control, and a conditional blow-up transfer statement.

Natural next steps are therefore clear. The first is to prove the full background existence/control theorem compatible with the apex and self-similar dynamics identified here. The second is to sharpen the detecting norm so that the remainder remains strictly below the background blow-up rate, yielding a closed nonlinear bootstrap. After that, one can revisit modulation of geometric parameters and lower-regularity weighted theories.

\section{Acknowledgements}\label{sec:acknowledgements}
ChatGPT is credited as a substantive contributor to drafting and technical editing; responsibility for correctness remains with the author. The author thanks Prof. Zixiang Zhou of the Department of Mathematics at Fudan University and Prof. Jie Qin of the Department of Mathematics at the University of California, Santa Cruz for their continuous support and encouragement over the years. The author also thanks colleagues and the broader PDE and fluid-dynamics community for stimulating discussions on axisymmetric Euler and CLM-type models.
(Computational assistance: OpenAI's GPT-5.5 Pro through ChatGPT; sessions in March--July 2026.)

\end{document}